\newtheorem{theorem}{Theorem}
\newtheorem{definition}{Definition}
\newtheorem{corollary}[theorem]{Corollary}
\newtheorem{lemma}[theorem]{Lemma}
\newcommand{\diff}{\mathrm{d}}
\renewcommand{\le}{\leqslant}
\renewcommand{\ge}{\geqslant}
\renewcommand{\leq}{\leqslant}
\renewcommand{\geq}{\geqslant}
\renewcommand{\tilde}{\widetilde}
\begin{document}

\title{Safe adaptive importance sampling: a mixture approach}
\author{Bernard Delyon and Fran\c cois Portier\thanks{Corresponding author, \texttt{francois.portier@gmail.com}.}}
\affil{Universit\'e de Rennes 1 and Institut Polytechnique de Paris}
\maketitle

\begin{abstract}
This paper investigates \textit{adaptive importance sampling} algorithms 
for which the \textit{policy}, 
the sequence of distributions used to generate the particles, 
is a mixture distribution between a flexible \textit{kernel density estimate} 
(based on the previous particles), and a ``safe'' heavy-tailed density. 
When the share of samples generated according to the safe density goes to zero 
but not too quickly, two results are established: 
(i) uniform convergence rates are derived for the policy toward the target density; 
(ii) a central limit theorem is obtained for the resulting integral estimates. 
The fact that the asymptotic variance is the same as the variance of an ``oracle'' procedure with variance-optimal policy, illustrates the benefits of the approach. 
In addition, a subsampling step (among the particles) can be conducted before constructing the kernel estimate in order to decrease the computational effort without altering the performance of the method.
The practical behavior of the algorithms is illustrated in a simulation study.

\bigskip

\noindent \textit{Keywords:} Monte Carlo methods; adaptive importance sampling; kernel density estimation; martingale methods.

\end{abstract}

\section{Introduction}

The Monte Carlo simulation framework has become indisputably fruitful for exploring probability density functions, 
especially when the ambient space has a large dimension. 
Domains of application include for instance computational physics, Bayesian modeling and optimization. Among the most popular Monte Carlo approaches, there are \textit{Markov Chains Monte Carlo}, \textit{sequential Monte Carlo} and \textit{adaptive importance sampling} (AIS). Reference textbooks includes  \cite{evans:2000}, \cite{robert:2004}, \cite{del:2013}, \cite{owen:13}.
%

This study is part of the AIS methodology, the main characteristic of which is to alternate between the two following stages: (i) new particles are generated under a certain probability distribution called the \textit{policy} $q_k$, and (ii) the next policy $q_{k+1}$ is settled on using the new particles. This last point reflects the {\it adaptive} character of the method. Classically, 
two families of methods can be distinguished depending on the approach taken to model the policy: 
\textit{parametric} and \textit{nonparametric}. 

Pioneer works on adaptive schemes have focused on parametric families to model the policy. 
They include, among others \cite{kloek+v:1978}, \cite{geweke:1989}, \cite{ho+b:1992}, \cite{owen+z:2000}, 
\cite{cappe+g+m+r:2004}, \cite{cappe+d+g+m+r:2008} (see also \cite{elvira+m+l+b:2015} for a review on the variant called \textit{adaptive multiple importance sampling}). 
In \cite{ho+b:1992}, martingale techniques were successfully employed to describe AIS schemes and their approach was recently extended \citep{delyon+p:2018} to obtain a central limit theorem for AIS integral estimates when $q_k$ is chosen out of a parametric family. 
Nonparametric approaches were originally based on kernel smoothing techniques 
and include \cite{west:1993}, \cite{givens+r:1996}, \cite{zhang:1996}, \cite{neddermeyer:2009}. All these authors defined the policy as a kernel density estimate based on the previous particles re-weighted by importance weights.

 In the present work, the policy $q_k$ is designed to estimate a certain density function $f$, called the \textit{target}. This is convenient when many integrals $\int gf$ are to be computed \citep[section 1.2]{ho+b:1992}, making less efficient to use any criterion that would depend on $g$ as for instance in \cite{zhang:1996} and \cite{owen+z:2000}. In addition, the present work focuses on \textit{self-normalized importance sampling} \citep[Chapter 9]{owen:13}, which only requires to know $f$ up to an unknown scale factor. This is particularly relevant for Bayesian estimation where the likelihood is known only up to a scale factor.  


The proposed approach, called \textit{safe adaptive importance sampling} (SAIS), follows from estimating the policy as a mixture between a kernel density estimate of $f$ (similar to \cite{west:1993}, \cite{givens+r:1996}, \cite{zhang:1996}, \cite{neddermeyer:2009}) and some ``safe" density with heavy tails compared to the ones of $f$. Such a modeling of the policy is motivated by the defensive importance sampling approach proposed in \cite{hesterberg:1995} (see also \cite{owen+z:2000}). Even though the kernel density estimate is flexible and adaptive, it remains risky as it depends on the location of the available particles: 
if these particles are away from the important places of $f$ 
then the resulting AIS estimate may result in a large variance. 
In contrast, the heavy-tailed density constitutes the safe part of the mixture as it is meant to alleviate the previous problem by allowing an exhaustive visit of the space during the algorithm. 
The tuning of the mixture parameter between these two densities will be a key ingredient in our work as this parameter will adjust the trade-off between variance efficiency, that is achieved when $q_{k}$ is close to $f$ (a result presented in the next section), and the exhaustiveness of the visit, that is achieved when $q_k$ has sufficiently large tails.

Another novelty of the paper is to consider the issue of policy learning as a functional approximation problem and our first theoretical contribution is the derivation of uniform convergence rates for $q_k$ estimating $f$.
Based on this, a central limit theorem is established for the resulting integral estimates. The success of the approach is illustrated by the asymptotic variance which is the same as the variance of the ``oracle'' procedure that would use the policy $q_k = f$ from the beginning. 
 
 In addition, a subsampling variant of SAIS, based on the \textit{sampling importance resampling} approach \citep{gordon+s+s:1993}, is introduced to decrease the computation time without deteriorating the performance of the method. We show that while having the same asymptotic behavior as SAIS, the subsampling variant requires only $n^{1+\delta}\log(n)$ operations ($\delta\in (0,1)$, defined in the next section, represents the degree of subsampling) whereas standard SAIS would need $n^2$ operations ($n$ is the number of requests to $f$).

From a theoretical stand point, the critical aspect of this work is to deal with the \textit{adaptive} character of the algorithms. 
The techniques used in the proofs bear resemblance with the ones developed in \cite{delyon+p:2018} where martingale tools have been used to study parametric AIS but more powerful results are required. Specifically, to handle kernel based estimates with importance weights, a modified version of Bennett's concentration inequality \citep{freedman:1975} turns out to be very useful.

The outline of the paper is as follows. In section \ref{sec:algo}, the mathematical framework is introduced, the algorithms are presented and illustrated. The main results are stated in section \ref{sec:main_results} and some comments are given in section \ref{sec:remarks}. Section \ref{sec:num_results} investigates their practical behavior. 
The mathematical proofs are gathered in Section \ref{sec:proof}.

\section{The SAIS framework}\label{sec:algo}

\subsection{Background}

Let $(X_k)_{k\geq 1}$ be a sequence of random variables defined on $(\Omega, \mathcal F,\mathbb P)$ and valued in $\mathbb R^d$.
The distribution of the sequence $(X_k)_{k\geq 1}$ is specified by its \textit{policy} as defined below. 

\begin{definition}
A policy is a sequence of probability density functions  $(q_k)_{k\geq 0}$ with respect to the Lebesgue measure adapted to the natural $\sigma$-field $(\mathcal F_{k})_{k\geq 0}$, $\mathcal F_{k} =\sigma(X_1,\ldots, X_{k})$ for $k\geq 1$, and $\mathcal F_0 = \emptyset$.  The sequence $(q_k)_{k\geq 0}$  is said to be the policy of  $(X_k)_{k\geq 1}$ whenever $X_k\sim q_{k-1}$, conditionally to $\mathcal F_{k-1} $.
\end{definition}

Let $f_U:\mathbb R^d \to \mathbb R_{\geq 0}$ be a measurable function such that $0<\int f_U < +\infty$ and define the associated probability density function $f = f_U / \int f_U$ (in this paper any integral is with respect to the Lebesgue measure). The function $f_U$ is called the \textit{unnormalized target} while the density $f$ is simply called the \textit{target}. The sequence $(W_k)_{k\geq 1}$ of \textit{importance weights} is defined by
\begin{align*}
&W_k = \frac{f_U (X_k) }{ q_{k-1}(X_k)}, \qquad \forall k\geq 1.
\end{align*}
Let $n\in \mathbb N^*$ denote the \textit{allocation}, i.e.,  the number of available requests to $f_U$. For any allocation $n\in \mathbb N^*$, the vector of \textit{normalized importance weights} $ (W_{n,k})_{k= 1,\ldots , n }$ is given by
\begin{align*}
\forall k=1,\ldots, n,\qquad &W_{n,k} \propto W_k  \quad  \text{such that}  \quad \sum_{k=1}^n W_{n,k} = 1.
\end{align*}
From the collection of weighted particles $(W_{k} , X_k)_{k=1,\ldots, n}$, integrals of the type $\int gf $ are computed as normalized estimate $\sum_{k=1} ^n W_{n,k} g(X_k)$.  The starting point of our approach is the following straightforward martingale property.
\begin{lemma}\label{prop:mg}
Let $g:\mathbb R^d \to \mathbb R$ be such that $\int |g|f <\infty$ and let $(q_k)_{k\geq 0}$ be the policy of $(X_k)_{k\geq 1}$. If, for any $k\geq 0$, $q_{k}$ dominates $f$, then the sequence 
\begin{align*}
 \left(\sum_{k=1}^n \left\{ W_k g(X_k)  - \int g f \right\} \right)_{n\geq 1} \quad \text{is a $(\mathcal F_n)_{n\geq 1}$-martingale} 
\end{align*}
  with quadratic variation $\sum_{k=1}^n V ( q_{k-1},g)$ where $V (q,g) = \int ( g^2 f ^2  / q )  - (\int g f ) ^2$.
\end{lemma}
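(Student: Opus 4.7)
The statement reduces to a direct change-of-variables calculation that exploits the defining property $X_k \mid \mathcal{F}_{k-1} \sim q_{k-1}$ together with the $\mathcal{F}_{k-1}$-measurability of $q_{k-1}$, so I do not anticipate any real obstacle; the only thing to watch is integrability.

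Setting $D_k = W_k g(X_k) - \int gf$, I would first verify adaptedness (immediate from the definitions of $W_k$ and $X_k$) and then conditional integrability. Because $q_{k-1}$ dominates $f$, the weight $W_k = f_U(X_k)/q_{k-1}(X_k)$ is well-defined almost surely, and unfolding the conditional density of $X_k$ gives
\[
E\bigl[\,|W_k g(X_k)|\,\big|\,\mathcal{F}_{k-1}\bigr] \;=\; \int \frac{|g|\,f_U}{q_{k-1}}\, q_{k-1} \;=\; \int |g|\, f_U,
\]
which is finite by the assumption $\int |g| f < \infty$ (identifying $f$ with $f_U$ in the statement, i.e.\ working with the normalized target). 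The same computation without absolute values yields $E[W_k g(X_k) \mid \mathcal{F}_{k-1}] = \int gf$, hence $E[D_k \mid \mathcal{F}_{k-1}] = 0$, which is exactly the martingale property for $M_n = \sum_{k=1}^n D_k$ with respect to $(\mathcal{F}_n)_{n\geq 1}$.

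The quadratic variation is obtained by the same mechanism applied to $W_k^2 g(X_k)^2$:
\[
E\bigl[D_k^2 \,\big|\, \mathcal{F}_{k-1}\bigr] \;=\; E\bigl[W_k^2 g(X_k)^2 \,\big|\, \mathcal{F}_{k-1}\bigr] - \Bigl(\int gf\Bigr)^2 \;=\; \int \frac{g^2 f^2}{q_{k-1}} - \Bigl(\int gf\Bigr)^2 \;=\; V(q_{k-1}, g).
\]
Summing over $k$ and using the standard characterization $\langle M \rangle_n = \sum_{k=1}^n E[D_k^2 \mid \mathcal{F}_{k-1}]$ for a square-integrable martingale with differences $D_k$ delivers the announced quadratic variation.

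The only subtle point worth flagging is the possibility that $V(q_{k-1}, g) = +\infty$ for some $k$; in that regime $M_n$ is only a local martingale, but the displayed identity for the predictable quadratic variation still makes sense as an almost sure identity in $[0,+\infty]$, which is what the lemma asserts. In the sequel this pathology is excluded by the working moment assumptions on $g$ and the policy.
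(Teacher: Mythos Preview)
Your argument is correct and is exactly the direct computation one expects; the paper itself does not spell out a proof but simply introduces the lemma as ``the following straightforward martingale property,'' so there is nothing to compare against beyond the observation that your write-up matches the intended one-line justification.
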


\subsection{The auxiliary density estimation problem}

To choose the policy $q_k$ we consider an auxiliary problem: the estimation of $f$ with a kernel estimate. This is contrasting with other approaches that would focus on the estimation of $\int gf$ for a certain function $g$ and then choose $q_k$ accordingly (as proposed for instance in \cite{zhang:1996}). In that, we agree with one of the guidelines stated in \citep[section 1.2]{ho+b:1992} as the resulting approach will not depend on any integrand function $g$ but only on the target $f$. 

Let $K:\mathbb{R}^d \to \mathbb R_{\geq 0}$ be a density called \textit{kernel} and $(h_k)_{k\geq 1}$ be a sequence of positive numbers called \textit{bandwidths}. The kernel estimate of $f$ at step $n$, $f_n$, is given by
\begin{align*}
f_n (x) = \sum_{k=1}^n W_{n,k} K_{h_n} (x - X_k) ,\qquad x\in \mathbb R^d,
\end{align*}
where $K_h(u) = K(u/h)/h^d$. The estimate  $f_n$ is thus a mixture of $n$ densities centered on $X_k$ and having standard deviation $h_k$. Each weight $W_{n,k}$ reflects the importance of the associated particle $X_k$ within the mixture.

In the case where the policy is fixed, $q_k = q$ for all $k\in \mathbb N$,  weak convergence (denoted by ``$\leadsto $'') of the
previous estimate $f_n(x) $, for some $x\in \mathbb R^d$, can be easily obtained.  Define
\begin{align*}
 &\tilde f_n   =   (f* K_{h_n}) ,\\
& \sigma_q^2(x)  =  f(x)^2/ q(x) , \qquad x\in \mathbb R^d,
\end{align*}
where $*$ stands for the standard convolution product and with the convention $0/0 = 0$.  The proof of the following lemma is given in Section \ref{append1}.

\begin{lemma}[fixed policy]\label{lemma_fixed_policy}
Let $K:\mathbb R^d \to \mathbb R_{\geq 0}$ be a bounded probability density function and let $f$ and $q$ be continuous densities on $\mathbb R^d$ such that for some $c>0$,  $f(x) \leq c q(x)$ for all $x\in \mathbb R^d$.
Suppose that $q$ is the (constant) policy of $(X_k)_{k\geq 1}$. If $(h_k)_{k\geq 1} $ is a positive sequence such that $nh_n^d\to \infty$ and $h_n\to 0$, then for any $x\in \mathbb R^d$, it holds that, as $n\to \infty$,
\begin{align*}
(nh_n^d)^{1/2}
\left( f_n(x) -   \tilde f_n(x)    \right) \leadsto \mathcal N\left( 0 , \sigma_q^2(x) \int K^2 \right) .
\end{align*}
\end{lemma}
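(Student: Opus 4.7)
The plan is to exploit the iid structure granted by the fixed policy and reduce the statement to a Lyapunov CLT for a triangular array, followed by Slutsky's lemma to handle the self-normalization.

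First, I would unfold the normalized estimator. Since $q_k=q$ for every $k$, the particles $X_1,\ldots,X_n$ are iid with density $q$, and the scale factor $\int f_U$ cancels in the normalized weights, so
\[
f_n(x)=\frac{N_n(x)}{D_n},\qquad N_n(x)=\frac1n\sum_{k=1}^n \frac{f(X_k)}{q(X_k)}K_{h_n}(x-X_k),\quad D_n=\frac1n\sum_{k=1}^n\frac{f(X_k)}{q(X_k)}.
\]
By the strong law, $D_n\to\int f=1$ a.s. So it suffices to prove $\sqrt{nh_n^d}(N_n(x)-\tilde f_n(x))\leadsto \mathcal N(0,\sigma_q^2(x)\int K^2)$ and then apply Slutsky, after checking that the remainder $\sqrt{nh_n^d}(D_n-1)\tilde f_n(x)$ is $o_{\mathbb P}(1)$ (it is of order $\sqrt{h_n^d}\cdot O_{\mathbb P}(1)$ since $\tilde f_n(x)$ is bounded by continuity of $f$ and standard kernel arguments).

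Next I would compute the moments of $Y_{n,k}:=(f(X_k)/q(X_k))K_{h_n}(x-X_k)$ under $X_k\sim q$. A direct change of variables gives
\[
\mathbb E[Y_{n,k}]=(f*K_{h_n})(x)=\tilde f_n(x),\qquad \mathbb E[Y_{n,k}^2]=\frac{1}{h_n^d}\int \frac{f(x-h_n u)^2}{q(x-h_n u)}K(u)^2\,du,
\]
so, by continuity of $f$ and $q$ and dominated convergence (the integrand is controlled by $c\,f(x-h_n u)K(u)^2$ using $f\le cq$), $h_n^d\,\mathrm{Var}(Y_{n,k})\to \sigma_q^2(x)\int K^2$. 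Consequently $n\,\mathrm{Var}(N_n(x))\cdot h_n^d\to\sigma_q^2(x)\int K^2$, which identifies the limiting variance.

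For the CLT on the triangular array $\{Y_{n,k}/n\}$, I would verify Lyapunov's condition at order $3$. The crude bound $|Y_{n,k}|\le c\|K\|_\infty h_n^{-d}$ yields
\[
\mathbb E[|Y_{n,k}|^3]\le (c\|K\|_\infty)^2 h_n^{-2d}\,\mathbb E[Y_{n,k}]=O(h_n^{-2d}),
\]
so the Lyapunov ratio is $O\big(n\cdot h_n^{-2d}/(n/h_n^d)^{3/2}\big)=O\big((nh_n^d)^{-1/2}\big)\to 0$ by the assumption $nh_n^d\to\infty$. Combined with the variance calculation, this delivers the desired Gaussian limit for $N_n(x)$, and Slutsky closes the proof.

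The only mildly delicate point is the dominated-convergence step for $\mathbb E[Y_{n,k}^2]$: one needs the hypothesis $f\le cq$ precisely to produce the integrable envelope $c\,f(x-h_n u)K(u)^2$ and to ensure $\sigma_q^2$ is continuous at $x$, so the limit is identified as $\sigma_q^2(x)\int K^2$. Everything else is standard iid bookkeeping, and no martingale machinery is needed here since the policy is constant.
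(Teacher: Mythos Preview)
Your proof is correct and follows essentially the same route as the paper's: both unfold the self-normalization, dispatch the denominator via Slutsky (the paper bounds $\mathbb E[(\sum_i(W_i-1))^2]\le n\int\sigma_q^2$), identify the limiting variance through the substitution $y=x-h_n u$, and verify a triangular-array CLT---you use Lyapunov at order~3, the paper uses Lindeberg by observing that the truncation set $\{|W_iK_i|>\varepsilon\sqrt{n/h_n^d}\}$ is eventually empty, and the two are interchangeable here precisely because of the uniform bound $|Y_{n,k}|\le c\|K\|_\infty h_n^{-d}$. The only small wrinkle is that your dominated-convergence envelope $c\,f(x-h_n u)K(u)^2$ is $n$-dependent, so ordinary DCT does not apply as stated; the paper sidesteps this by rewriting $h_n^d\,\mathbb E[W_1^2K_1^2]=(\int K^2)\,(\sigma_q^2*\tilde K_{h_n})(x)$ with $\tilde K=K^2/\!\int K^2$ and invoking the approximate-identity convergence $(\sigma_q^2*\tilde K_{h_n})(x)\to\sigma_q^2(x)$, which is legitimate because $\sigma_q^2$ is continuous and integrable ($\int\sigma_q^2\le c\int f=c$).
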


The choice of the policy $(q_k)_{k\geq 0}$, presented in the next section, will be guided by the following integrated-variance criterion 
\begin{align*}
C(q) = \int_{\mathbb R^d} \sigma_q^2(x) \, \diff x,
\end{align*}
which might be seen as an asymptotic version of the \textit{mean integrated squared error}, a popular criterion in density estimation \cite[section 3.1.2]{silverman:2018}.
Fortunately, its minimum is uniquely achieved when $q=f$ as stated in the following lemma, proof of which is given in Section \ref{append2}.

\begin{lemma}[variance optimality]\label{lemma:var_minimizer}
Let $f$ be a probability density function. The minimum of $C$ over the set of densities $q$ is achieved if and only if $q = f$ a.e.
\end{lemma}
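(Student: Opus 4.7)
The plan is to reduce the statement to the Cauchy--Schwarz inequality. First I would observe that since $C(q)\ge 0$ always, we may restrict attention to the densities $q$ for which $C(q)<+\infty$: any other $q$ trivially satisfies $C(q)\geq C(f)$ once we have shown $C(f)=1$. The direct computation $C(f)=\int f^2/f = \int f = 1$ gives this reference value and also shows that the infimum is at most $1$.

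Next I would apply Cauchy--Schwarz to the pair of functions $f/\sqrt{q}$ and $\sqrt{q}$ on the set $\{q>0\}$. Under the convention $0/0=0$, the finiteness of $C(q)$ forces $\{f>0\}\subset\{q>0\}$ up to a null set, so integrating over $\{q>0\}$ captures the full mass of $f$ and of $q$. The inequality then reads
\begin{equation*}
1 \;=\; \left(\int f\right)^{2} \;=\; \left(\int \frac{f}{\sqrt{q}}\,\sqrt{q}\right)^{2} \;\leq\; \left(\int \frac{f^{2}}{q}\right)\left(\int q\right) \;=\; C(q),
\end{equation*}
which combined with $C(f)=1$ proves that $f$ is a minimizer.

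For uniqueness, I would invoke the equality case of Cauchy--Schwarz: the inequality above is an equality iff there exists a constant $\lambda\geq 0$ such that $f(x)/\sqrt{q(x)} = \lambda\sqrt{q(x)}$ for almost every $x\in\{q>0\}$, i.e.\ $f=\lambda q$ a.e. Integrating both sides and using $\int f = \int q = 1$ gives $\lambda = 1$, hence $q = f$ almost everywhere. I do not anticipate any real obstacle; the only delicate point is being careful that the $0/0 = 0$ convention is consistent with the support condition $\{f>0\}\subset\{q>0\}$ whenever $C(q)<+\infty$, which I would state explicitly at the beginning of the argument.
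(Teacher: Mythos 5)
Your proof is correct and follows essentially the same route as the paper: establish $C(f)=1$, apply Cauchy--Schwarz to $f/\sqrt{q}$ and $\sqrt{q}$ to get $C(q)\geq 1$, and use the equality case to force $f=\lambda q$ with $\lambda=1$ since both are densities. The only cosmetic difference is that you make the support inclusion $\{f>0\}\subset\{q>0\}$ explicit whereas the paper simply notes that $C(q)=+\infty$ when $q$ fails to dominate $f$.
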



\subsection{Standard SAIS}

The policy at use in standard SAIS is $(q_k)_{k\geq 0}$ defined for each $k\geq 1 $ as
\begin{align}\label{sampler_update}
q_{k}  = (1-\lambda_k)  f_k + \lambda_k q_0 
\end{align}  
where $(\lambda_k)_{k\geq 1}\subset [0,1]$ is a decreasing sequence of mixture weights and $q_0$ is the initial density. The component $q_0$ of the mixture allows to visit the space extensively during the algorithm. For this reason $q_0$ should be chosen with a sufficiently large tail compared to $f$. 
On the other side the value of $\lambda_k$ will decrease during the procedure in order to gain in efficiency, as Lemma \ref{lemma:var_minimizer} indicates. Balancing suitably between $f_k $ and $q_0$ enables to realize the trade-off, described in \cite{hesterberg:1995}, \cite{owen+z:2000}, between a tentatively optimal and a safe strategy.  Note in passing that generating from $K$ and $q_0$ allows to generate according to $q_k$. The algorithm is written below and an illustration is provided in Figure  \ref{figu:illust}.

\medskip
\noindent \textbf{Standard SAIS}\\
\textbf{Inputs}: The bandwidths $(h_k)_{k = 1,\ldots, n-1}$, the mixture weights $(\lambda_k)_{k = 1,\ldots, n-1}$, the initial density $q_0$
\smallskip\hrule\smallskip
 \noindent For $k= 0,1,\ldots, n-1 $:
 
 \smallskip
 \noindent generate $X_{k+1}$ from $q_k$ in \eqref{sampler_update} and compute $W_{k+1} = f_U(X_{k+1} ) / q_k(X_{k+1})$\\
\hrule
\medskip

Because computing each $q_{k}(X_{k+1})$ requires $O(k)$ operations, the cost of the previous SAIS algorithm is $O(n^2)$ operations plus $O(n)$ evaluations of $f_U$. When $f_U$ is hard to compute (e.g., Bayesian likelihood), this last contribution may dominate. In contrast, when a request to $f_U$ represents a single operation, the $n^2$ operations could be prohibitive compared to other approaches such as parametric AIS. To widen the applicability of SAIS, we propose a subsampling version whose main purpose is to decrease the computational cost of the initial version without reducing the method performances.

\subsection{Subsampling SAIS}

To decrease the number of particles, we follow the \textit{sampling importance resampling} approach proposed in \cite{gordon+s+s:1993}. At each step $n$,  a bootstrap sample $(X_{n,k}^*)_{k= 1,\ldots,  \ell_{n}}$ of (small) size $ \ell_n$ (compared to $n$) with distribution $  \mathbb P_n = \sum_{k=1} ^{n} W_{n,k} \delta_{X_k} $ is drawn. The associated kernel estimate of $f$ is then defined as
\begin{align*}
f_{n}^*(x) = \ell_{n}^{-1} \sum_{k=1}^{\ell_{n}} K_{h_n} (x-X_{n,k}^*) ,\qquad x\in \mathbb R^d,
\end{align*}
and the policy at use in the subsampling version is simply given by, for $k\geq 1$,
\begin{align}\label{sampler_update_boot}
q_{k}  = (1-\lambda_k)  f_k^* + \lambda_k q_0.
\end{align}
The procedure is fully described in the algorithm below.
 

\medskip
\noindent \textbf{Subsampling SAIS}\\
\textbf{Inputs}: The bandwidths $(h_k)_{k = 1,\ldots, n-1}$, the mixture weights $(\lambda_k)_{k = 1,\ldots, n-1}$, the size of the bootstrap sample $(\ell_k)_{k = 1,\ldots, n-1}$, the initial density $q_0$
\smallskip\hrule\smallskip
\noindent Initialization: generate $X_1$ from  $q_0$\\
 \noindent For $k= 1,2,\ldots, n-1 $:
 \hspace{3cm}\begin{itemize}\itemsep10pt
\item  generate an independent and identically distributed sample $(X_{k,i}^*)_{i = 1,\ldots,  \ell_{k}}$ from $  \mathbb P_k = \sum_{i=1} ^{k} W_{k,i} \delta_{X_i}  $
\item generate $X_{k+1}$ from $q_k$ in \eqref{sampler_update_boot} and compute $W_{k+1} = f_U( X_{k+1} ) / q_{k}( X_{k+1} ) $
\end{itemize}

\hrule
\medskip

At each iteration $k$ of the previous algorithm, we can keep in memory the cumulative sum $\sum_{i=1} ^k W_i$ which is updated by a single operation. A uniform variable over $[0, \sum_{i=1} ^ k W_i ]$ is drawn and localized within the cumulative sums. This costs us $\log(k)$ operations (with a dichotomic search) and gives us one multinomial draw. Since $\ell_k$ such draws must be done, we have $\ell_k\log(k)$ operations to obtain the sample $(X_{k,i}^*)_{i = 1,\ldots,  \ell_{k}}$. Then, we draw one point $X_{k+1}$ according to $q_k$ in (\ref{sampler_update_boot}) which is only one operation because $f^*_k$ is a mixture with equal weights. Evaluating $q_{k}(X_{k+1}) $ is $\ell_k$ more operations. Hence $O(\ell_k \log(k))$ operations are needed at each iteration $k$ implying that the total number of operations is of order
$\sum_{k=1}^n  \ell_k \log(k)$. In the simulation study we shall work with $\ell_n = n^{\delta}$, $\delta\in (0,1)$, leading to a computation cost of order $ n^{1+\delta} \log (n )$.

\begin{figure}
\centering\includegraphics[height=3.5cm,width = 3.5cm]{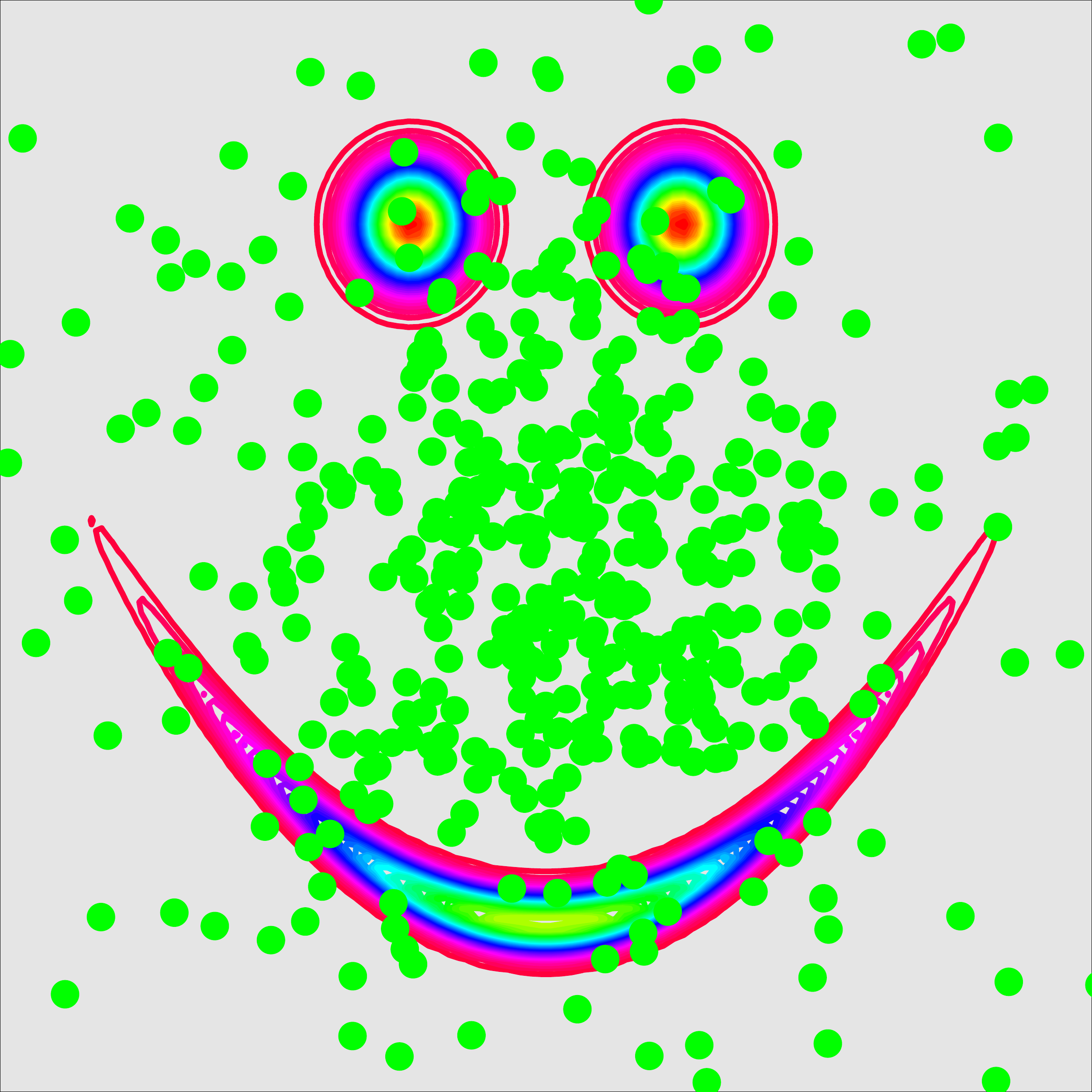}\centering\includegraphics[height=3.5cm,width = 3.5cm]{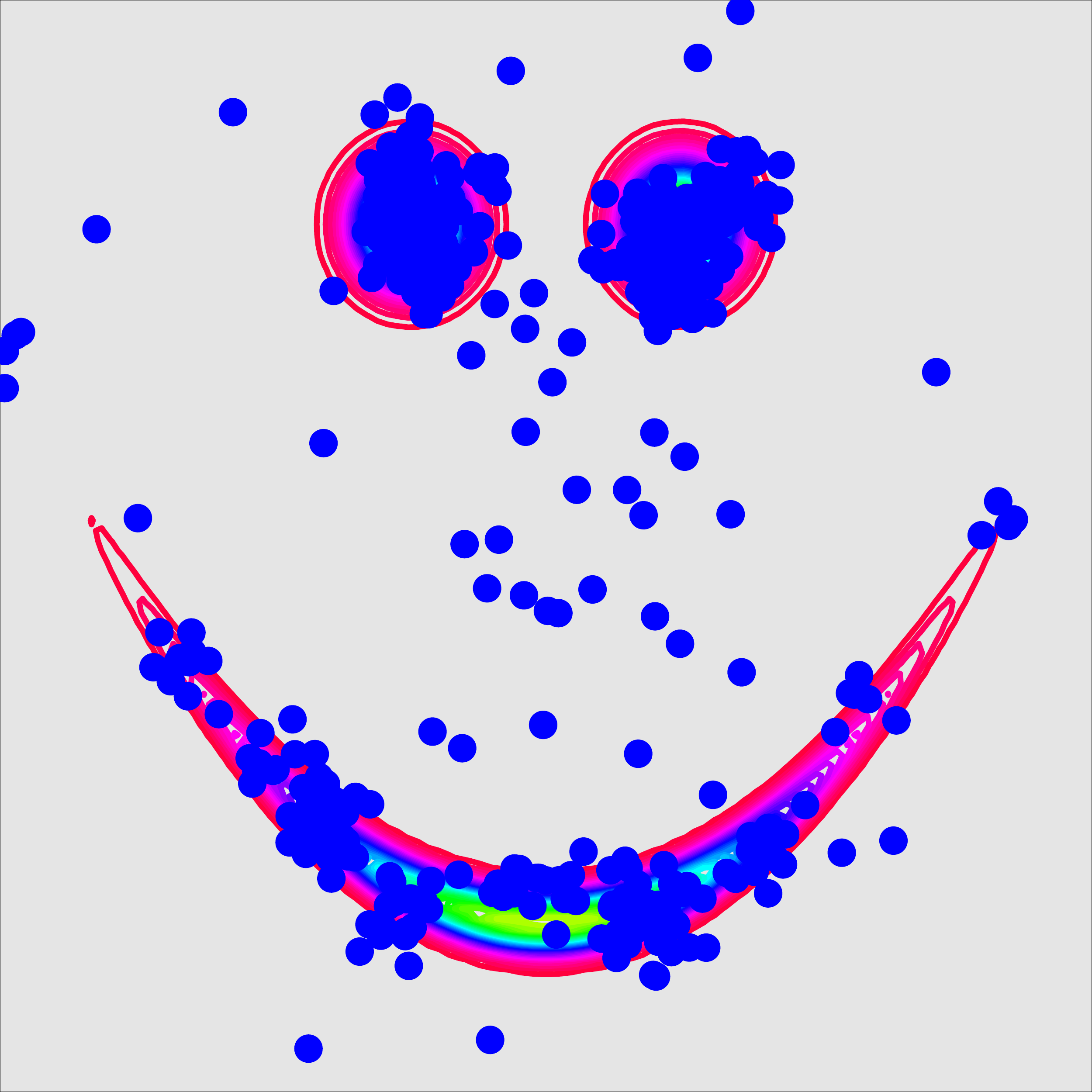}\centering\includegraphics[height=3.5cm,width = 3.5cm]{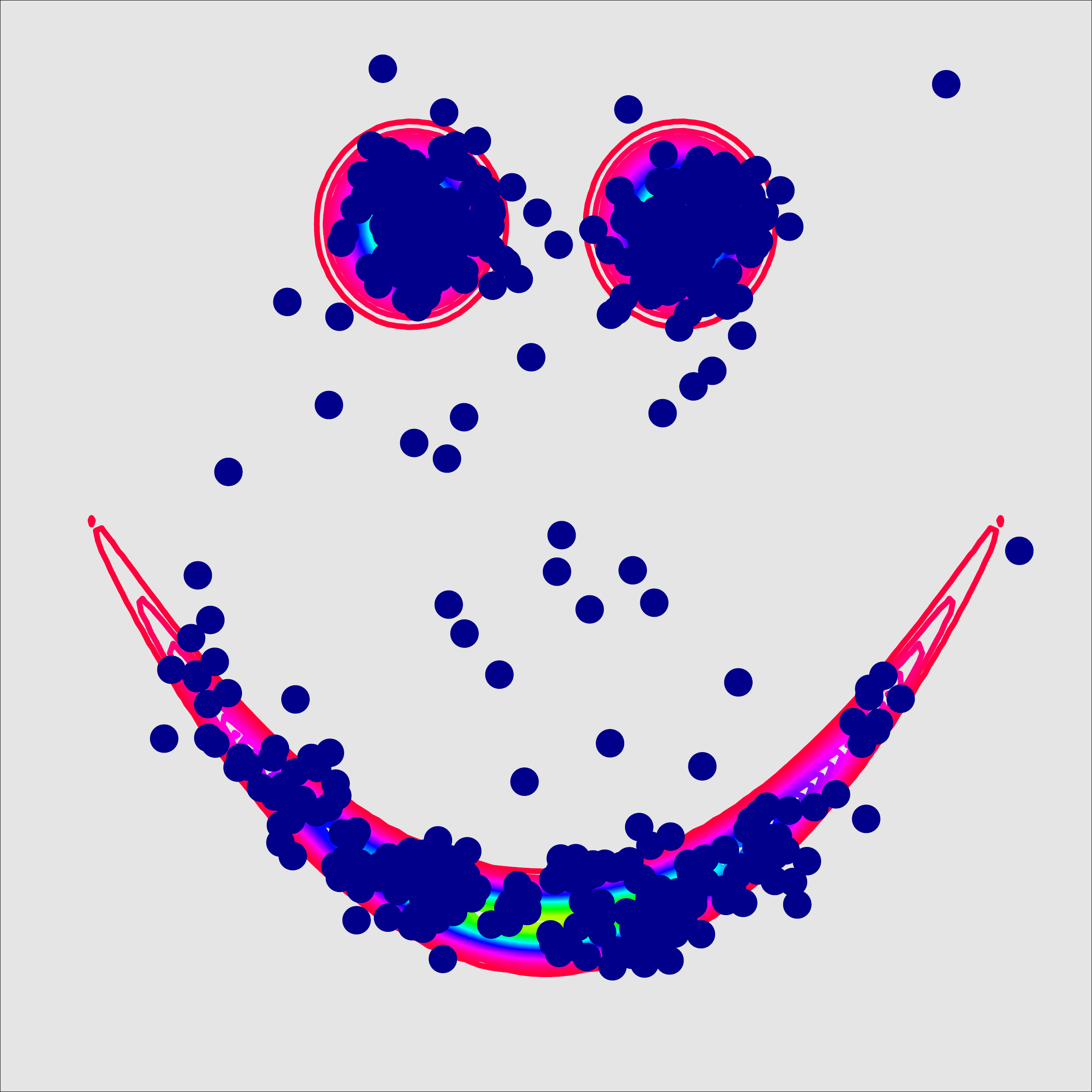}
\centering\includegraphics[height=2.9cm,width = 3.5cm]{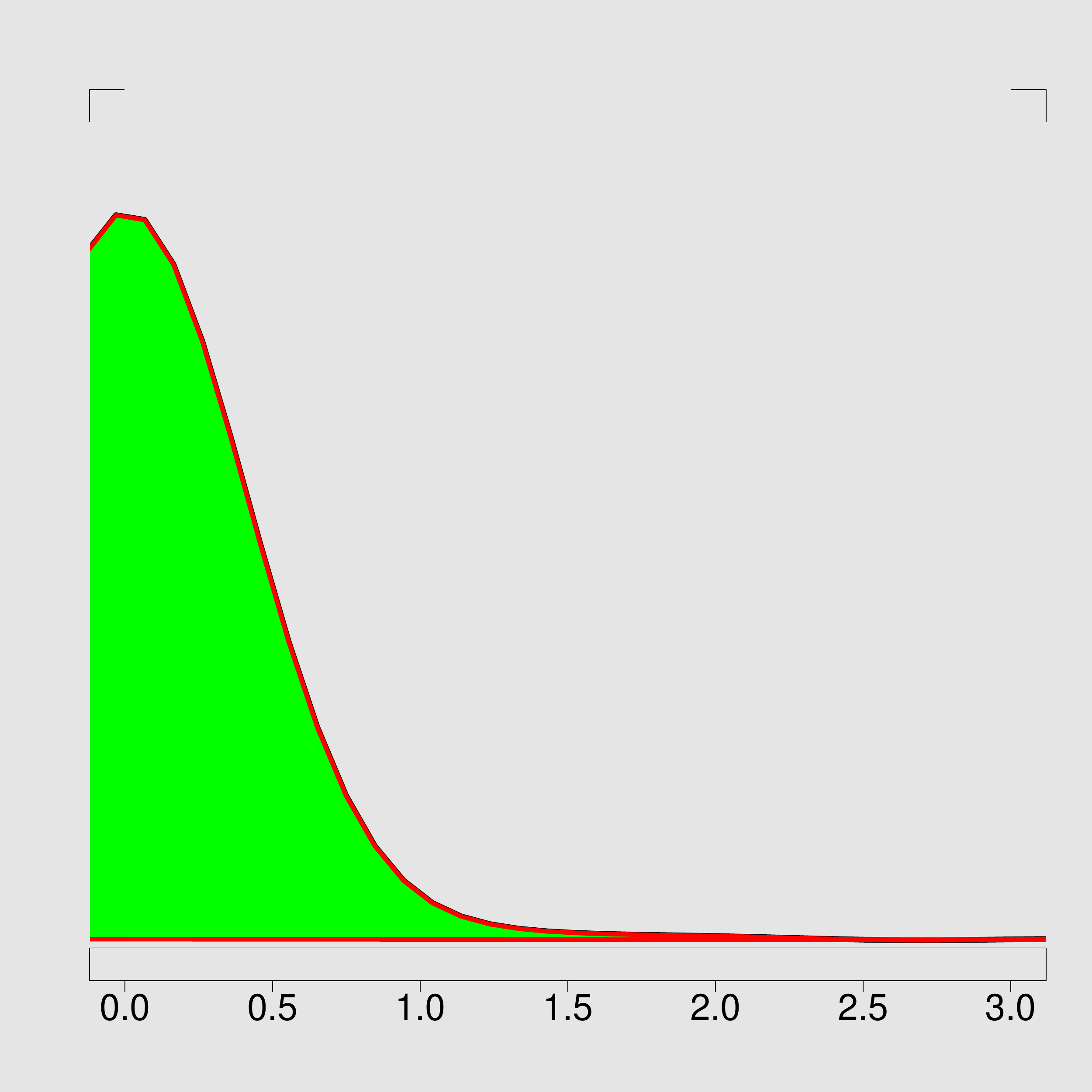}\centering\includegraphics[height=2.9cm,width = 3.5cm]{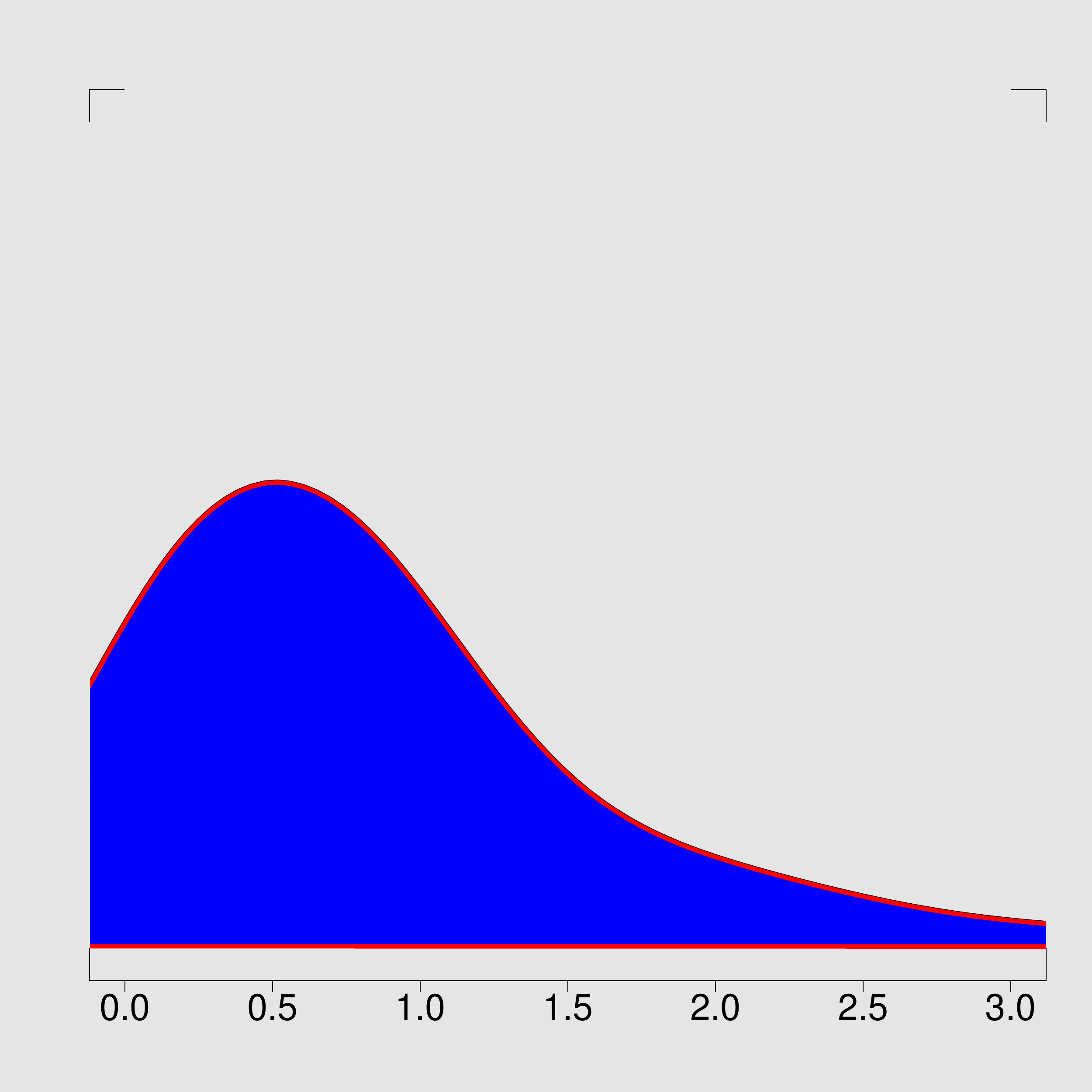}\centering\includegraphics[height=2.9cm,width = 3.5cm]{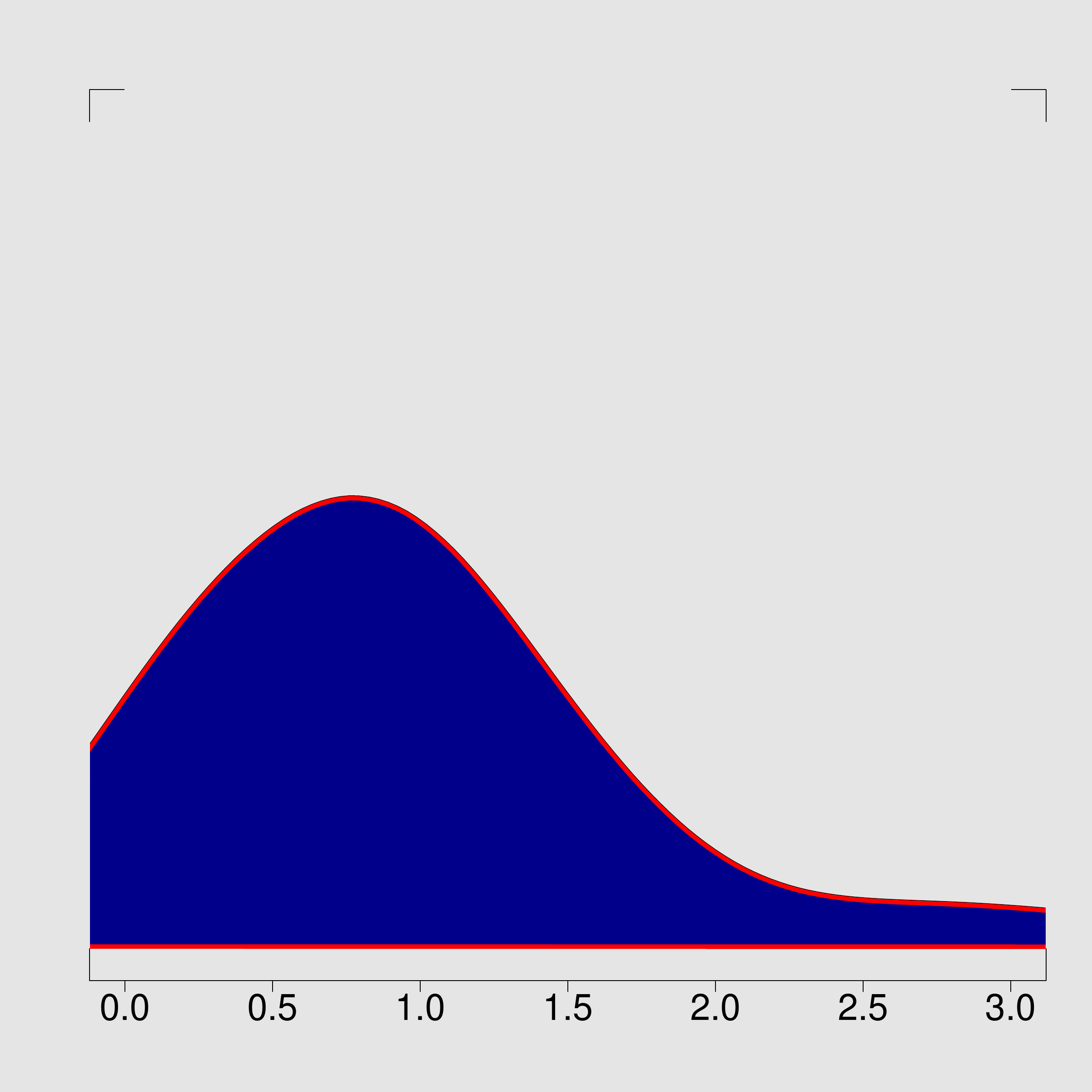}
\caption{\label{figu:illust}
The target $f$ is a mixture of a banana shaped distribution and two Gaussians, The $\lambda_k$ are equal to $1$ for $k=0,\ldots, 499$, to  $0.1$ for $k=500,\ldots, 999$ and to  $0.05$ for $k =1000,\ldots, 1499$. The $h_k$ are equal to $1$ all along the algorithm. The initial density $q_0$ is a standardized Student's $t$-distribution. The particles indexed by $k=1,\ldots, 500$ are on the left, $k=501,\ldots, 1000$ are in the middle, $k =1001,\ldots, 1500$ are on the right. At the bottom are density plots of the corresponding importance weights $W_k$.}
\end{figure}

\section{Asymptotics of SAIS}\label{sec:main_results}

\subsection{Assumptions}\label{sec:martingale}

The sequences $(\lambda_k)_{k\geq 1}$, $(h_k)_{k\geq 1}$ and $(\ell_k)_{k\geq 1}$ are such that for any $k\geq 1$, $h_k$ is bounded and positive, $\lambda_k\in (0,1]$ is nonincreasing and $1\leq \ell_k\leq k$ is nondecreasing.
For clarity reasons, the additional assumptions related to the sequences $(h_k)_{k\geq 1}$, $(\lambda_k)_{k\geq 1}$ and $(\ell_k)_{k\geq 1}$ will be stated within the statements of each result.
The results of the paper are expressed using the sequence
\begin{align*}
a_n =  \sqrt{\frac{\log(n)}{nh_n^d}}, \qquad n \geq 1, 
\end{align*}
and the following standard notation: for two nonnegative sequences $(u_n)_{n\geq 1}$ and $(v_n)_{n\geq 1}$, $u_n\ll v_n$ or $u_n = o(v_n)$ means $u_n  / v_n \to 0$ as $n\to \infty$; $u_n = O(v_n)$ means that $u_n / v_n $ is bounded. The Euclidean norm is denoted by $\|\cdot\|$. The assumptions on $f_U$, $q_0$ and $K$ are given below. For clarity the assumptions are stated with respect to $f = f_U / \int f_U $ rather than $f_U$. They would have been the same using $f_U$.

\begin{enumerate}[label=(\text{H}\arabic*),resume=count_cond]
\item \label{cond:f}  $f$ is a probability density function on $\mathbb R^d$ two times continuously differentiable, with bounded second derivatives.

\item \label{cond:f_tail}   $ q_0 $ is a probability density function on $\mathbb R^d$. For any $\eta\in (0, 1]$, there exists $c_\eta>0 $ such that for all  $x\in\mathbb R^d$ 
\begin{align*}
 f(x)^\eta  \leq c_\eta q_0(x).
\end{align*}
In addition, there exist $C_0$ and $r_0$ positive numbers such that for all  $x\in\mathbb R^d$ 
\begin{align*}
q_0(x) \leq C_0  (1+\|x\|)^{-r_{0}} .
\end{align*}

\item\label{cond:K} $K:\mathbb R^d\to \mathbb R_{\geq 0}$ is a Lipschitz probability density 
function such that
\begin{align*}
& \int uK(u) \,\diff u = 0,\qquad \int \|u\|^2K(u)\,\diff u<\infty.
\end{align*}
In addition, there exist $C_K$ and $r_K$ positive numbers such that for all  $x\in\mathbb R^d$
\begin{align*}
&K(x)\le C_K(1+\|x\|)^{-r_K}.
\end{align*}
\end{enumerate}

\subsection{Preliminary results with general policy}

Let us now present some results that are valid for a general policy 
\begin{align}\label{eq:general_policy}
q_k = (1- \lambda_k) f_k^\circ + \lambda_k q_0,\qquad k\geq 1,
\end{align}
where $(f_k^\circ)_{k\geq 1}$ is any sequence of densities adapted to the filtration $(\mathcal F_k)_{k\geq 1}$. They will be useful in the analysis of standard SAIS (equation \eqref{sampler_update}) as well as in the study of the subsampling version (equation \eqref{sampler_update_boot}). The proof of the following lemma is presented in Section \ref{append:initial}.

\begin{lemma}[initial bound]\label{lemma:initial_consistency}
Assume \ref{cond:f}, \ref{cond:f_tail}, \ref{cond:K} and work under a general policy \eqref{eq:general_policy}. If $a_n^2 \ll \lambda_n$, then we have,
\begin{align*}
&\sup_{x\in\mathbb R^d}|f_{n}(x)-f(x)|=O\big(a_n\lambda_n^{-1/2}+h_n^2\big)\qquad \text{a.s.}
\end{align*}
\end{lemma}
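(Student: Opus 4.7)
The strategy is the classical decomposition $f_n(x)-f(x)=[f_n(x)-\tilde f_n(x)]+[\tilde f_n(x)-f(x)]$ with $\tilde f_n=f*K_{h_n}$. The bias $\tilde f_n-f$ is $O(h_n^2)$ uniformly in $x$ by a second-order Taylor expansion of $f$, using the moment conditions on $K$ from \ref{cond:K} and the bounded Hessian of $f$ from \ref{cond:f}. For the stochastic part, write $f_n=N_n/S_n$ with $S_n=n^{-1}\sum_{k\leq n}W_k$ and $N_n(x)=n^{-1}\sum_{k\leq n}W_kK_{h_n}(x-X_k)$. Since $\mathbb E[W_k\mid\mathcal F_{k-1}]=Z:=\int f_U$ and $\mathbb E[W_kK_{h_n}(x-X_k)\mid\mathcal F_{k-1}]=Z\tilde f_n(x)$, one obtains
\[
f_n(x)-\tilde f_n(x)=\frac{n^{-1}M_n(x)+\tilde f_n(x)(Z-S_n)}{S_n},
\]
where $M_n(x)=\sum_{k\leq n}\xi_k(x)$ is an $(\mathcal F_k)$-martingale with increments $\xi_k(x)=W_kK_{h_n}(x-X_k)-Z\tilde f_n(x)$. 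It then suffices to bound the two martingale averages $n^{-1}M_n(x)$ and $S_n-Z$, the denominator being controlled via $S_n\to Z$ a.s.

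Next, I would apply a Bennett/Freedman martingale inequality to $M_n(x)$ at fixed $x$. The policy form \eqref{eq:general_policy} gives $q_{k-1}\geq\lambda_{k-1}q_0\geq\lambda_nq_0$, and \ref{cond:f_tail} at $\eta=1$ (yielding $f\leq c_1q_0$) provides the uniform bound $W_k\leq Zc_1/\lambda_n$. Consequently $|\xi_k(x)|\lesssim(\lambda_nh_n^d)^{-1}=:B_n$, and the conditional variance satisfies
\[
\mathbb E[\xi_k(x)^2\mid\mathcal F_{k-1}]\leq\int\frac{f_U(y)^2}{q_{k-1}(y)}K_{h_n}(x-y)^2\diff y\leq c_1Z^2\lambda_n^{-1}\int f(y)K_{h_n}(x-y)^2\diff y\leq Ch_n^{-d}\lambda_n^{-1},
\]
so that $V_n:=\sum_k\mathbb E[\xi_k^2\mid\mathcal F_{k-1}]\lesssim nh_n^{-d}\lambda_n^{-1}$. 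Setting $t=cna_n\lambda_n^{-1/2}$ yields $t^2/V_n\gtrsim\log n$, while the Bernstein correction $tB_n$ is dominated by $V_n$ precisely under the hypothesis $a_n^2\ll\lambda_n$; Freedman therefore gives $\mathbb P(|M_n(x)|/n\geq ca_n\lambda_n^{-1/2})\leq n^{-c'}$ for $c'$ as large as desired. The martingale $S_n-Z$ is treated identically (without the kernel) and converges at the faster rate $\sqrt{\log n/n}\ll a_n\lambda_n^{-1/2}$.

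To pass from pointwise to a supremum over $\mathbb R^d$, I would combine a covering argument on $\{\|x\|\leq R_n\}$ with a tail truncation on its complement. Since $K$ is Lipschitz, $|K_{h_n}(x-X_k)-K_{h_n}(x'-X_k)|\lesssim h_n^{-d-1}\|x-x'\|$, so a $\delta_n$-net with $\delta_n$ polynomially small has polynomial cardinality and the associated union bound costs only a $\log n$ factor, absorbed by enlarging $c$. On $\{\|x\|>R_n\}$, the polynomial tail bounds in \ref{cond:f_tail} and \ref{cond:K}, combined with $f\leq c_1q_0$, force $f(x)$, $\tilde f_n(x)$ and $f_n(x)$ to decay polynomially in $\|x\|$, so choosing $R_n$ a sufficiently large power of $n$ makes the tail contribution negligible; Borel--Cantelli then upgrades the probability bound to the stated almost sure rate. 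The main obstacle is the adaptive nature of the weights: without the safe mixture term $\lambda_kq_0$ in the policy, the ratio $1/q_{k-1}(X_k)$ inside $W_k$ could blow up. The role of \ref{cond:f_tail} with $\eta=1$ is precisely to absorb $f^2/q_0$ into a constant multiple of $f$, while $\lambda_n$ enters cleanly in both $B_n$ and $V_n$; balancing the Bernstein correction against the variance term then produces exactly the boundary condition $a_n^2\ll\lambda_n$ of the lemma.
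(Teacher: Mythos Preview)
Your proposal follows essentially the same route as the paper: the same algebraic decomposition (your $M_n(x)$ is the paper's $Z_n(x)$, your $n(S_n-Z)$ its $M_n$), Freedman's inequality with the bounds $|\xi_k|\lesssim(\lambda_nh_n^d)^{-1}$ and $V_n\lesssim nh_n^{-d}\lambda_n^{-1}$, a Lipschitz covering on a polynomial ball, and a tail estimate outside it. One small slip: the rate for $S_n-Z$ is $O\big(\sqrt{\log n/(n\lambda_n)}\big)$, not $O\big(\sqrt{\log n/n}\big)$, since the conditional variance of $W_k$ is $\int f^2/q_{k-1}\leq c_1\lambda_n^{-1}$; this is still $o(a_n\lambda_n^{-1/2})$, so your conclusion survives.

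There is, however, a genuine gap in the tail step. The assertion that ``$f_n(x)$ decays polynomially in $\|x\|$'' is not immediate: $f_n$ is a \emph{random} density built from particles $X_i$ whose laws $q_{i-1}$ are themselves random and uncontrolled in the tails (the lemma is stated for a general policy \eqref{eq:general_policy} with arbitrary $f_k^\circ$). The inequality $f\leq c_1q_0$ alone, i.e.\ $\eta=1$ in \ref{cond:f_tail}, does not suffice here. The paper's argument (Lemma~\ref{lemma:fnxgrand}) splits on $\{\|X_i\|\leq R_n/2\}$ versus its complement; the first piece is handled by the kernel tail in \ref{cond:K}, but the second requires the \emph{full} strength of \ref{cond:f_tail}, namely $f^\eta\leq c_\eta q_0$ for every $\eta\in(0,1]$, to conclude that $f/q_0$ decays faster than any given polynomial. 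This is what turns the random sum $\sum_i(f/q_0)(X_i)\mathds 1_{\{\|X_i\|>R_n/2\}}$ into a deterministic $O(n R_n^{-p})$ for arbitrary $p$, which can then absorb the factor $h_n^{-d}\lambda_n^{-1}$. With only $\eta=1$ you get $f/q_0\leq c_1$, and there is no almost-sure control on how many particles land in $\{\|x\|>R_n/2\}$.
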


The next result, whose proof is given in Section \ref{append:th:1}, 
provides a sufficient rate of proximity between $f^\circ_n$ and $f$ allowing to improve the previous initial bound. 
That is, we shall assume that there exists $\varepsilon\in (0,1/2]$ such that
\begin{align}\label{qfhyp}
&\sup_{x\in\mathbb R^d}|f^\circ_{n}(x)-f(x)|=O\big(\lambda_n^{1/2+\varepsilon}\big)\qquad \text{a.s.}
\end{align}

\begin{lemma}[improved bound]\label{lemma:1}
Assume \ref{cond:f}, \ref{cond:f_tail}, \ref{cond:K} and work under a general policy \eqref{eq:general_policy} that satisfies \eqref{qfhyp}.  If $a_n\ll \lambda_n$, we have
\begin{align*}
&\sup_{x \in \mathbb R^{d}}|f_n(x)-f(x)|=O(a_n+h_n^2), \qquad \text{a.s.}
\end{align*}
\end{lemma}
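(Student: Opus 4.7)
The plan is to bootstrap the crude bound of Lemma \ref{lemma:initial_consistency} by re-running the martingale analysis with a genuinely constant control on $f/q_{k-1}$, the hypothesis \eqref{qfhyp} being precisely what removes the $\lambda_n^{-1/2}$ loss. Write $f_n(x)=N_n(x)/D_n$ with
\[
N_n(x)=\frac1n\sum_{k=1}^n W_k K_{h_n}(x-X_k),\qquad D_n=\frac1n\sum_{k=1}^n W_k,
\]
and split each numerator/denominator into a bias part and a $(\mathcal F_n)$-martingale part by subtracting the one-step conditional means $E[W_kK_{h_n}(x-X_k)\mid\mathcal F_{k-1}]=(\int f_U)(f\ast K_{h_n})(x)$ and $E[W_k\mid\mathcal F_{k-1}]=\int f_U$. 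The bias $(f\ast K_{h_n})(x)-f(x)=O(h_n^2)$ uniformly in $x$ is the standard second-order calculation using \ref{cond:f} and the moment conditions on $K$ in \ref{cond:K}.

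The crucial step, and the main obstacle, is to show that \eqref{qfhyp} yields $\sup_{y}f(y)^2/q_{k}(y)=O(1)$ a.s., uniformly in $k$. I would split $\mathbb R^d$ into a ``good'' set $A_k=\{y:f(y)\geq C\lambda_k^{1/2+\varepsilon}\}$, on which $q_k(y)\geq(1-\lambda_k)(f(y)-C\lambda_k^{1/2+\varepsilon})\geq f(y)/2$, and a ``bad'' complement on which $q_k(y)\geq \lambda_k q_0(y)$. On $A_k^c$, combine \ref{cond:f_tail} with a small $\eta>0$ to obtain $f^2/q_k \leq c_\eta \lambda_k^{-1}f^{2-\eta}$; since $f\leq C\lambda_k^{1/2+\varepsilon}$ there, this is bounded by $c_\eta C^{2-\eta}\lambda_k^{(1/2+\varepsilon)(2-\eta)-1}$, which is $O(1)$ as soon as $\eta$ is chosen below $4\varepsilon/(1+2\varepsilon)$. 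This same dichotomy, with an analogous computation for $f/q_k$ itself rather than $f^2/q_k$, also gives a crude but sufficient pointwise bound $\sup_y f(y)/q_k(y)=O(\lambda_k^{-1})$ via \ref{cond:f_tail} (taking $\eta=1$) and the boundedness of $f$.

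With these controls in hand, apply the Freedman/Bennett-type inequality mentioned in the introduction to the martingale $\sum_k Z_k(x)$, where $Z_k(x)=W_kK_{h_n}(x-X_k)-E[\,\cdot\mid\mathcal F_{k-1}]$. The conditional variance is $\sum_k E[Z_k(x)^2\mid\mathcal F_{k-1}]\leq C\,n\,h_n^{-d}$ thanks to the bound on $f^2/q_{k-1}$ combined with $\int K_{h_n}(x-\cdot)^2\leq \|K\|_2^2 h_n^{-d}$, while the individual-step bound is $M_n=O(h_n^{-d}/\lambda_n)$. Choosing deviation level $t\asymp\sqrt{V_n\log n}\asymp\sqrt{n\log n/h_n^d}$, the condition $M_n t\lesssim V_n$ that triggers the Bernstein regime reduces exactly to $\lambda_n\gg a_n$, the hypothesis of the lemma, giving $n^{-1}|\sum_k Z_k(x)|=O(a_n)$ with probability at least $1-n^{-c}$ for $c$ arbitrarily large.

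Finally, to pass from pointwise to uniform bounds, I would use that $x\mapsto K_{h_n}(x-\cdot)$ is Lipschitz with constant $O(h_n^{-d-1})$ (by \ref{cond:K}), cover a ball $B(0,R_n)$ with $R_n$ polynomial in $n$ by polynomially many $\delta_n$-balls with $\delta_n\asymp a_n h_n^{d+1}$, and apply a union bound. The tail region $\|x\|\geq R_n$ is handled by the polynomial decay of $q_0$ and $K$ assumed in \ref{cond:f_tail} and \ref{cond:K}, which makes both $f(x)$ and $f_n(x)$ negligible there for $R_n$ large enough. A parallel but simpler martingale argument (with $K_{h_n}$ replaced by $1$) yields $D_n=\int f_U+O(a_n)$ a.s., and combining with the bias gives $\sup_x|f_n(x)-f(x)|=O(a_n+h_n^2)$ a.s., as required.
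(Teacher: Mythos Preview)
Your proposal is correct and follows essentially the same route as the paper: the paper also reduces to the decomposition \eqref{df1}, handles the bias via Lemma~\ref{lemma:bias}, the tail $\|x\|>n^{s_0}$ via Lemma~\ref{lemma:fnxgrand}, and the main martingale term via a Freedman/Bernstein inequality (Corollary~\ref{th:freedmanZ}) together with a polynomial $\varepsilon$-net, the key input being a uniform-in-$k$ control on $f^2/q_k$ extracted from \eqref{qfhyp} (their Lemma~\ref{lemma:fqi}); your good/bad set computation yielding $\sup_y f^2/q_k=O(1)$ is exactly the content of that lemma, and your observation that the Bernstein regime $M_nt\lesssim V_n$ reduces to $a_n\ll\lambda_n$ is precisely how the hypothesis enters in the paper's Lemma~\ref{lemma:true_rate}. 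The only point you leave implicit is the localization on events $\Omega_U=\{\sup_{k,x}f^2/q_k\le U\}$ needed to feed deterministic bounds into Freedman's inequality before letting $U\to\infty$, which the paper makes explicit; this is routine.
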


We terminate the section by two central limit theorems. Both require \eqref{qfhyp} to hold in order to obtain the appropriate asymptotic variance, i.e., the one associated to the policy $f$. The proof is in Section \ref{append:lemma:2}.

\begin{lemma}\label{lemma:2}
Assume \ref{cond:f}, \ref{cond:f_tail}, \ref{cond:K} and work under a general policy \eqref{eq:general_policy} that satisfies \eqref{qfhyp}.  If $ a_n^2 \ll \lambda_n \ll 1$,  then for any function $g:\mathbb R^d \to \mathbb R$ such that $\int g^2q_0<\infty$, as $n\to \infty$,
\begin{align}\label{gnorm}
&\sqrt {n}\left(\sum_{k=1} ^nW_{n,k}g(X_{k})-\int g f\right)
  \leadsto\mathcal N\big(0,V(f,g)\big)  ,
\end{align}
where $V(f,g)$ is defined in Lemma \ref{prop:mg}.
\end{lemma}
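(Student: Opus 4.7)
My plan is to apply a martingale central limit theorem to the numerator of the self-normalized estimator, handling the denominator separately. Since $\sum W_{n,k} g(X_k)$ is invariant under rescaling of $f_U$, I assume without loss of generality that $\int f_U = 1$, so $f_U = f$. Setting $\bar g = g - \int g f$, one has the algebraic identity
\[
\sqrt n \left(\sum_{k=1}^n W_{n,k} g(X_k) - \int g f \right) = \frac{n^{-1/2} N_n}{1 + n^{-1} T_n},
\]
with $N_n := \sum_{k=1}^n W_k \bar g(X_k)$ and $T_n := \sum_{k=1}^n (W_k - 1)$. Lemma~\ref{prop:mg} applied to $\bar g$ (for which $\int \bar g f = 0$) and to the constant $1$ shows these are $(\mathcal F_n)$-martingales with predictable brackets $\langle N\rangle_n = \sum_k \int \bar g^2 f^2/q_{k-1}$ and $\langle T\rangle_n = \sum_k(\int f^2/q_{k-1} - 1)$.

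For the denominator, the bound $q_{k-1} \geq \lambda_{k-1} q_0$ together with \ref{cond:f_tail} at $\eta = 1$ yields $\int f^2/q_{k-1} \leq c_1/\lambda_{k-1}$, so $\langle T\rangle_n = O(\sum_{k \leq n} \lambda_{k-1}^{-1})$. The hypothesis $a_n^2 \ll \lambda_n$ combined with boundedness of $h_n$ forces $\lambda_n^{-1} = o(n)$, hence $\langle T\rangle_n = o(n^2)$, and Doob's $L^2$ inequality yields $T_n/n \to 0$ in probability.

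The core step is applying the martingale CLT to $n^{-1/2} N_n$, which requires (i) $n^{-1}\langle N\rangle_n \to V(f,g)$ in probability and (ii) the conditional Lindeberg condition. For (i), I would prove the pointwise-in-$k$ statement $V(q_{k-1}, \bar g) \to V(f,g)$ a.s.\ and deduce the Cesaro version. Writing $V(q_{k-1}, \bar g) - V(f,g) = \int \bar g^2 f (f-q_{k-1})/q_{k-1}$ and partitioning at a threshold $t_k \to 0$: on $\{f \geq t_k\}$, \eqref{qfhyp} together with boundedness of $q_0$ and $f$ yields $\sup|f-q_{k-1}| = O(\lambda_{k-1}^{1/2+\varepsilon})$, while $q_{k-1} \geq (1-\lambda_{k-1}) t_k/2$ for $k$ large; choosing $t_k \gg \lambda_{k-1}^{1/2+\varepsilon}$ makes $|f-q_{k-1}|/q_{k-1}$ vanish uniformly, and dominated convergence with integrable majorant $\bar g^2 f$ (finite because $f \leq c_1 q_0$ and $\int g^2 q_0 < \infty$) kills this piece. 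On $\{f < t_k\}$, the crude bound $|f-q_{k-1}|/q_{k-1} \leq c_1/\lambda_{k-1} + 1$ is offset by the smallness of $\int_{\{f<t_k\}} \bar g^2 f$, obtained by further splitting according to $\{q_0 \geq \eta_k\}$ versus its complement and exploiting $\int \bar g^2 q_0 < \infty$; a careful coupling of $t_k$ and $\eta_k$ yields the required $o(\lambda_{k-1})$ rate.

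For (ii), I would truncate $g = g_M + g^M$ with $g_M = g \mathbb 1\{|g| \leq M\}$. For bounded $g_M$, the same argument as in (i) applied to the fourth moment shows $E[|W_k \bar g_M(X_k)|^4 \mid \mathcal F_{k-1}]$ is eventually bounded (it converges to $\int f \bar g_M^4 < \infty$), hence $n^{-2}\sum E[M_k^4 \mid \mathcal F_{k-1}] = O(1/n) \to 0$, and Lindeberg follows from $E[M_k^2 \mathbb 1\{|M_k|>\varepsilon\sqrt n\} \mid \mathcal F_{k-1}] \leq E[M_k^4 \mid \mathcal F_{k-1}]/(\varepsilon^2 n)$. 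The untruncated part $g^M$ has asymptotic variance contribution bounded by $V(f, g^M) \leq c_1 \int (g^M)^2 q_0$, which vanishes as $M \to \infty$, so a standard truncation argument transfers the CLT from $g_M$ to $g$; Slutsky's lemma then combines the numerator CLT with $n^{-1} T_n \to 0$ to yield \eqref{gnorm}. The principal obstacle is the bracket convergence in (i): the blow-up $O(1/\lambda_{k-1})$ of $f/q_{k-1}$ in the tails requires delicate balancing via the quantitative proximity of $f_{k-1}^\circ$ to $f$ from \eqref{qfhyp} and the tail structure of $q_0$ from \ref{cond:f_tail} combined with $\int g^2 q_0 < \infty$.
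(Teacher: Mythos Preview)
Your overall architecture---Slutsky decomposition of the self-normalized estimator, martingale CLT for $N_n$, separate control of $T_n/n$---matches the paper's, and your handling of the denominator is fine. The genuine gap is in step (i), on the tail region $\{f<t_k\}$. There you invoke the crude bound $|f/q_{k-1}-1|\le c_1/\lambda_{k-1}+1$ and claim it is offset by $\int_{\{f<t_k\}}\bar g^2 f = o(\lambda_{k-1})$. But the other region already forces $t_k\gg\lambda_{k-1}^{1/2+\varepsilon}$, and the sharpest available bound from \ref{cond:f_tail} and $\int\bar g^2 q_0<\infty$ is $\int_{\{f<t_k\}}\bar g^2 f\le c_\eta t_k^{1-\eta}\int\bar g^2 q_0$ (using $f\le c_\eta f^{1-\eta}q_0$); since $(1/2+\varepsilon)(1-\eta)<1$ for every $\varepsilon\in(0,1/2]$ and $\eta>0$, this is never $o(\lambda_{k-1})$. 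Your $q_0$-split does not help either: the constraints $t_k/\eta_k=o(\lambda_{k-1})$ and $t_k\gg\lambda_{k-1}^{1/2+\varepsilon}$ force $\eta_k\gg\lambda_{k-1}^{\varepsilon-1/2}\ge 1$, so $\eta_k$ cannot tend to zero. The crude bound discards exactly the factor of $f$ in the integrand that is needed.

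The paper closes this by first isolating two uniform estimates (Lemma~\ref{lemma:fqi}):
\[
\sup_x\, f(x)\Big|\frac{f(x)}{q_i(x)}-1\Big|=O(\lambda_i^{\varepsilon})\quad\text{a.s.},
\qquad
\sup_x\,\frac{f(x)^2}{q_i(x)q_0(x)}=O(1)\quad\text{a.s.}
\]
The first is proved by splitting at $f\gtrless 2d_i$ with $d_i=\sup_x|q_i-f|=O(\lambda_i^{1/2+\varepsilon})$: on $\{f\le 2d_i\}$ one uses \ref{cond:f_tail} at exponent $\varepsilon$ to get $f\le(2d_i)^{1-\varepsilon}c_\varepsilon q_0\le C\lambda_i^{-1/2}q_i$, so that the \emph{factor $f$ itself} supplies the missing half-power of $\lambda_i$. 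With these two bounds, pointwise convergence $g^2f^2/q_{i-1}\to g^2f$ and the $L^1$ envelope $g^2f^2/q_{i-1}\le Cg^2q_0$ give the bracket limit by dominated convergence plus Ces\`aro, with no thresholding at all. The same envelope drives the paper's Lindeberg verification (direct control of the truncation sets $\{|gf/q_{i-1}-I(g)|>\varepsilon\sqrt n\}$), whereas your fourth-moment route in (ii) asserts that $\int\bar g_M^4 f^4/q_{k-1}^3$ converges to $\int\bar g_M^4 f$; this fails for the same reason---no uniform $L^1$ envelope for $f^4/q_{k-1}^3$ is available (the best is $C q_0/\lambda_{k-1}$), so dominated convergence does not apply.
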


\subsection{Main results}

Now we focus on the SAIS algorithms presented in Section \ref{sec:algo}. 
We start by considering standard SAIS. The subsampling version will be studied right after. 

Assuming that for some $\epsilon>0$, $ a_n +h_n^4\ll\lambda_n^{1+\varepsilon}$, we can apply Lemma \ref{lemma:initial_consistency} to obtain that (\ref{qfhyp}) is valid with $f_n$ in place of $f_n^\circ$. This permits to apply Lemma \ref{lemma:1} 
and leads to the following two results.

\begin{theorem}[uniform convergence rate]\label{th:1}
Assume \ref{cond:f}, \ref{cond:f_tail}, \ref{cond:K} and work under policy \eqref{sampler_update}.  If there exists $\epsilon>0$ such that $a_n +  h_n ^4 \ll \lambda_n^{1+\epsilon}\ll 1   $, we have with probability $1$
\begin{align*}
&\sup_{x \in \mathbb R^{d}}|f_n(x)-f(x)|=O(a_n+h_n^2).
\end{align*}
\end{theorem}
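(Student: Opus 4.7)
The proof plan follows exactly the two-step bootstrap sketched in the remarks preceding the theorem: use the weak bound of Lemma \ref{lemma:initial_consistency} (which requires no hypothesis on $f_n^\circ$) to verify the rate condition \eqref{qfhyp}, then feed this into Lemma \ref{lemma:1} to obtain the sharp rate. The main subtlety is that for standard SAIS we have $f_n^\circ = f_n$, so we must be careful that \eqref{qfhyp} is being applied to the same object $f_n$ that appears on the left-hand side of the conclusion of Lemma \ref{lemma:1} --- but because Lemma \ref{lemma:initial_consistency} does not assume any prior rate on $f_n^\circ$, this self-referential step is legitimate.

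First I would check the hypotheses of Lemma \ref{lemma:initial_consistency}. The standing assumption is $a_n + h_n^4 \ll \lambda_n^{1+\epsilon} \ll 1$. Since $\lambda_n \le 1$ we have $\lambda_n^{1+\epsilon} \le \lambda_n$, hence $a_n \ll \lambda_n$ and a fortiori $a_n^2 \ll \lambda_n$. Applying Lemma \ref{lemma:initial_consistency} with $f_k^\circ = f_k$ therefore yields, almost surely,
\begin{equation*}
\sup_{x\in \mathbb R^d}|f_n(x)-f(x)| = O\bigl(a_n \lambda_n^{-1/2} + h_n^2\bigr).
\end{equation*}

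Second, I would verify that this initial rate is sharp enough to imply \eqref{qfhyp} with $f_n^\circ = f_n$ for some $\varepsilon \in (0,1/2]$. From $a_n \ll \lambda_n^{1+\epsilon}$ we get $a_n \lambda_n^{-1/2} \ll \lambda_n^{1/2+\epsilon}$, and from $h_n^4 \ll \lambda_n^{1+\epsilon}$ we get $h_n^2 \ll \lambda_n^{1/2+\epsilon/2}$. Choosing $\varepsilon = \min(\epsilon/2,\,1/2) > 0$ we obtain
\begin{equation*}
\sup_{x\in \mathbb R^d}|f_n(x)-f(x)| = O\bigl(\lambda_n^{1/2+\varepsilon}\bigr)\quad \text{a.s.},
\end{equation*}
which is exactly condition \eqref{qfhyp} for the policy \eqref{sampler_update}.

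Third, since the condition $a_n \ll \lambda_n$ of Lemma \ref{lemma:1} has already been checked, I would apply that lemma to conclude
\begin{equation*}
\sup_{x\in \mathbb R^d}|f_n(x)-f(x)| = O(a_n + h_n^2)\quad \text{a.s.}
\end{equation*}
The anticipated obstacle is not in this theorem itself (which is essentially a clean three-line combination of the preliminary lemmas) but rather hidden in Lemma \ref{lemma:initial_consistency}, where one must handle the adaptive, weighted, kernel-smoothed estimator via a martingale concentration argument --- the Freedman-type inequality mentioned in the introduction. Given those lemmas, however, Theorem \ref{th:1} reduces to the bootstrap outlined above.
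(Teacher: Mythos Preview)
Your proposal is correct and follows the paper's own argument essentially verbatim: the paper simply remarks that under $a_n + h_n^4 \ll \lambda_n^{1+\epsilon}$ Lemma~\ref{lemma:initial_consistency} yields \eqref{qfhyp} with $f_n$ in place of $f_n^\circ$, after which Lemma~\ref{lemma:1} applies directly. Your explicit verification of the exponents (in particular the choice $\varepsilon = \min(\epsilon/2,1/2)$) just spells out what the paper leaves implicit.
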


We now consider the weak convergence of the sequence $\sum_{k=1}^n W_{n,k} g(X_k)$ for which we derive the asymptotic variance $V(f,g)$. The proof is a simple application of Lemma~\ref{lemma:initial_consistency} and Lemma~\ref{lemma:2}, equation \eqref{gnorm}.

\begin{theorem}[asymptotic normality of integral estimates]\label{th:cvd2}
Assume \ref{cond:f}, \ref{cond:f_tail}, \ref{cond:K} and work under policy \eqref{sampler_update}. If there exists $\epsilon >0$ such that  $a_n +  h_n ^4 \ll \lambda_n^{1+\epsilon}   \ll 1$ and if $ \int g^2 q_0 <\infty$, we have as $n\to \infty$, then \eqref{gnorm} is valid.
\end{theorem}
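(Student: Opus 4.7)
The plan is to recognise Theorem~\ref{th:cvd2} as a direct corollary of the general-policy results already in hand: the standard SAIS policy \eqref{sampler_update} is precisely the template \eqref{eq:general_policy} with the specific choice $f_k^\circ=f_k$. Consequently the whole task reduces to (a) producing a preliminary sup-norm rate for $f_n$ via Lemma~\ref{lemma:initial_consistency}, (b) checking that this rate is fast enough to satisfy the proximity hypothesis \eqref{qfhyp}, and then (c) invoking Lemma~\ref{lemma:2}.

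First I would verify that Lemma~\ref{lemma:initial_consistency} applies. Since $\lambda_n\leq 1$ eventually, the hypothesis $a_n\ll\lambda_n^{1+\epsilon}$ implies $a_n^{2}\ll\lambda_n^{2(1+\epsilon)}\ll\lambda_n$, so the lemma yields
\begin{equation*}
\sup_{x\in\mathbb R^d}|f_n(x)-f(x)|=O\bigl(a_n\lambda_n^{-1/2}+h_n^{2}\bigr)\qquad\text{a.s.}
\end{equation*}
Next I would check that this bound is strong enough to play the role of \eqref{qfhyp} with $f_n^\circ=f_n$. From $a_n\ll\lambda_n^{1+\epsilon}$ one gets $a_n\lambda_n^{-1/2}\ll\lambda_n^{1/2+\epsilon}$; from $h_n^{4}\ll\lambda_n^{1+\epsilon}$ one gets $h_n^{2}\ll\lambda_n^{1/2+\epsilon/2}$. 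Setting $\varepsilon'=\min(\epsilon/2,1/2)\in(0,1/2]$, we obtain
\begin{equation*}
\sup_{x\in\mathbb R^d}|f_n(x)-f(x)|=O\bigl(\lambda_n^{1/2+\varepsilon'}\bigr)\qquad\text{a.s.},
\end{equation*}
which is exactly \eqref{qfhyp} applied to $f_n$.

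Finally, the remaining conditions of Lemma~\ref{lemma:2} are readily verified: $\lambda_n\ll 1$ follows from $\lambda_n^{1+\epsilon}\ll 1$, $a_n^{2}\ll\lambda_n$ has already been noted, and the moment condition $\int g^{2}q_0<\infty$ is a direct assumption of the theorem. Equation \eqref{gnorm} of Lemma~\ref{lemma:2} then delivers the claimed CLT. There is essentially no new obstacle to overcome here: the martingale and concentration machinery has been concentrated in the general-policy lemmas, and the only step demanding care is the bookkeeping of exponents that upgrades the $O(a_n\lambda_n^{-1/2}+h_n^2)$ rate of Lemma~\ref{lemma:initial_consistency} into the $O(\lambda_n^{1/2+\varepsilon'})$ rate required to trigger Lemma~\ref{lemma:2}.
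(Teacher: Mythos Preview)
Your proposal is correct and follows exactly the route sketched in the paper: apply Lemma~\ref{lemma:initial_consistency} to get the initial rate, use the assumption $a_n+h_n^4\ll\lambda_n^{1+\epsilon}$ to verify \eqref{qfhyp} for $f_n^\circ=f_n$, and conclude with Lemma~\ref{lemma:2}. Your exponent bookkeeping is slightly more explicit than the paper's one-line proof, but the argument is the same.
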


Now let us consider SAIS with subsampling. The following result, whose proof is in Section \ref{app:proof_cvd3}, shows that subsampling is efficient as the convergence rate and the asymptotic variance are the same in Theorem \ref{th:cvd2}.

\begin{theorem}[asymptotic normality of integral estimates with subsampling]\label{th:cvd3}
Assume \ref{cond:f}, \ref{cond:f_tail}, \ref{cond:K} and work under the subbsampling policy \eqref{sampler_update_boot}.
If there exists $\epsilon >0$ such that  $a_n^2n/\ell_n + a_n + h_n ^4 \ll \lambda_n^{1+\epsilon}\ll 1$ and if $ \int g^2 q_0 <\infty$, then \eqref{gnorm} is valid.
\end{theorem}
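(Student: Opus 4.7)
The strategy is to reduce Theorem \ref{th:cvd3} to Lemma \ref{lemma:2} applied with $f_k^\circ = f_k^*$. The hypothesis $a_n \ll \lambda_n^{1+\epsilon}\ll 1$ forces in particular $a_n^2 \ll \lambda_n \ll 1$, which is the only rate condition required by Lemma \ref{lemma:2}. Thus, it suffices to verify that $f_n^*$ satisfies condition \eqref{qfhyp}: for some $\varepsilon > 0$,
\[
\sup_{x\in\mathbb R^d} |f_n^*(x) - f(x)| = O(\lambda_n^{1/2+\varepsilon}) \quad \text{almost surely.}
\]
Once this is established, \eqref{gnorm} follows at once from Lemma \ref{lemma:2}.

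I would split $f_n^* - f = (f_n^* - f_n) + (f_n - f)$ and treat each piece separately. For the second piece, the subsampling policy \eqref{sampler_update_boot} is a particular instance of the general policy \eqref{eq:general_policy} with $f_k^\circ$ taken to be $f_k^*$, so Lemma \ref{lemma:initial_consistency} applies under $a_n^2 \ll \lambda_n$ and gives $\sup_x |f_n(x)-f(x)| = O(a_n\lambda_n^{-1/2} + h_n^2)$ a.s. Since $a_n \ll \lambda_n^{1+\epsilon}$ yields $a_n \lambda_n^{-1/2} \ll \lambda_n^{1/2+\epsilon}$ and $h_n^4\ll \lambda_n^{1+\epsilon}$ yields $h_n^2 \ll \lambda_n^{(1+\epsilon)/2}$, this piece is of order $\lambda_n^{(1+\epsilon)/2}$.

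The first piece is the core work. Conditionally on $\mathcal F_n$, the resampled particles $X_{n,1}^*, \ldots, X_{n,\ell_n}^*$ are i.i.d.\ draws from $\mathbb P_n$ with $E[K_{h_n}(x - X_{n,1}^*)\mid \mathcal F_n] = f_n(x)$, so $f_n^*(x)-f_n(x)$ is an empirical mean of $\ell_n$ conditionally i.i.d., centered summands, each bounded by $\|K\|_\infty h_n^{-d}$ with conditional variance at most $\|K\|_\infty f_n(x) h_n^{-d} \le \|K\|_\infty^2 h_n^{-2d}$. A conditional Bennett inequality then gives the pointwise rate $\sqrt{\log(n)/(\ell_n h_n^d)} = a_n\sqrt{n/\ell_n}$ with exponentially small failure probability. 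To upgrade this to a uniform bound over $\mathbb R^d$, I would run a covering argument analogous to the one underlying Lemma \ref{lemma:initial_consistency}: inside a ball of polynomially growing radius, the Lipschitz regularity of $K$ from \ref{cond:K} allows a net of polynomial cardinality to be absorbed via the polynomial Lipschitz constant $O(h_n^{-d-1})$ of $K_{h_n}$; outside the ball, the polynomial decay of $K$ in \ref{cond:K} together with the a.s.\ localization of the $X_k$ inherited from the tail decay of $q_0$ in \ref{cond:f_tail} makes both $f_n^*$ and $f_n$ negligible. A union bound with constant tuned to $\log n$, followed by Borel--Cantelli, yields
\[
\sup_{x\in\mathbb R^d}|f_n^*(x) - f_n(x)| = O(a_n\sqrt{n/\ell_n}) \quad \text{a.s.,}
\]
which, under $a_n^2 n/\ell_n \ll \lambda_n^{1+\epsilon}$, is of order $\lambda_n^{(1+\epsilon)/2}$.

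Combining the two pieces shows \eqref{qfhyp} holds for $f_n^*$ with $\varepsilon = \epsilon/2$, and Lemma \ref{lemma:2} closes the argument. The main obstacle is the uniform Bennett bound on $f_n^* - f_n$: although conditioning on $\mathcal F_n$ removes all dependence between the resampled particles (and so avoids the martingale machinery needed elsewhere in the paper), the chaining over $\mathbb R^d$ together with the summability of failure probabilities across $n$ must be arranged carefully, especially because $\ell_n/n$ is allowed to be small, so that the Bennett tail term absorbs the discretization and tail losses while still leaving enough room for Borel--Cantelli.
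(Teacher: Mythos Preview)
Your proposal is correct and follows essentially the same route as the paper: the paper also reduces Theorem~\ref{th:cvd3} to Lemma~\ref{lemma:2} by verifying \eqref{qfhyp} for $f_n^*$ via the same decomposition $f_n^*-f=(f_n^*-f_n)+(f_n-f)$, handling the second term by Lemma~\ref{lemma:initial_consistency} and the first by a conditional Bennett/Freedman bound plus covering on $\{\|x\|\le n^s\}$ and a tail argument for $\{\|x\|>n^s\}$ (this is packaged as Theorem~\ref{bootconv} with $b_n=a_n\sqrt{n/\ell_n}$). One small point to make explicit: the pointwise rate $\sqrt{\log(n)/(\ell_n h_n^d)}$ comes from the sharper variance bound $\|K\|_\infty f_n(x)h_n^{-d}$, which requires $\sup_x f_n(x)$ to be almost surely bounded --- this follows from Lemma~\ref{lemma:initial_consistency}, as the paper notes.
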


\section{From theory to practice}\label{sec:remarks}

In this section, some comments dealing with the main results are given. Then we consider the practical tuning of the SAIS algorithms.

\subsection{General comments on the main results}

\paragraph{Related literature on kernel smoothing}
Theorem \ref{th:1} is related to uniform convergence results for kernel density estimates for independent sequences \citep{gigui2001,gine+g:02}, weak dependent sequences \citep{hansen:2008}, and Markov chains \citep{azais+d+p:2018,bertail+p:2018}. In these papers, the same rate of convergence, $\sqrt{\log(n) / (nh_n^d)}$ (for the variance term), was obtained but under different assumptions on the dependence structure of the considered sequences.

\paragraph{Related literature on the asymptotics of AIS}
The asymptotic regime that has been considered allows the distribution $q_k$ to change at each stage of the algorithm. This is similar to the regimes studied in \cite{ho+b:1992}, \cite{delyon+p:2018}, \cite{feng+m+s+w:2018} but different from the results presented in \cite{chopin:2004}, \cite{douc+g+m+r:2007a}, \cite{douc+g+m+r:2007b} where the sequence $(q_{k})_{k=0,\ldots, n-1}$ is frozen after a certain given time (i.e., the number of updates is finite), or from \cite{marin+p+s:2019}  where a variant of AIS is studied when the number of samples between each update is an increasing sequence going to $\infty$. Finally in \cite{zhang:1996}, the author works under another asymptotic regime in which the number of particles generated in the first stage has the same order as the total amount of particles.


\paragraph{Curse of dimensionality}
Theorems \ref{th:1} and \ref{th:cvd2} are of a different nature. Theorem \ref{th:1} is dealing with functional estimation and, consequently, is subjected to the well-known curse of dimensionality \citep{stone:1980}. 
In contrast, the weak convergence result stated in Theorem \ref{th:cvd2}, 
which is concerned with the estimation of a single parameter, $\int gf$, is not impacted by the value of $d$. 
This is because the estimation error between $f_n$ and $f$ intervenes at a second order in the decomposition used in the proof. 
This very last point motivated the subsampling version as it supports the use of \textit{rough but cheap} strategies 
for the estimation of $f$.

\paragraph{Choice of the kernel}
The kernel $K$ being non-negative (this is needed to ensure easy random generation according to $q_n$), 
it cannot have more than one vanishing moment. This bounds the exploitable smoothness of $f$ to two derivatives and explains 
why the rate of decrease of the bias term is ${h_n^2}$. 

\paragraph{Asymptotic normality of the kernel estimate}
Concerning the kernel estimate $f_n$, only uniform convergence results have been presented so far but the choice of the policy \eqref{sampler_update} has been motivated initially by a weak convergence result (see Lemma \ref{lemma_fixed_policy} and \ref{lemma:var_minimizer}). A question that remains is to know whether the optimal variance given in Lemma \ref{lemma:var_minimizer}, i.e., $f(x) \int K^2$ is achieved when the policy \eqref{sampler_update} is put to work. The answer is positive as stated in Theorem \ref{th:cvd2} given in Appendix \ref{sec:weakcvfn}.

\paragraph{The compact case}
When $f$ is compactly supported and bounded away from $0$, 
the study of the algorithm is simpler and similar results are valid under weaker conditions 
on $\lambda_n$ and $h_n$. This is presented in Appendix \ref{sec:compact_case}.

\subsection{Practical details}

\paragraph{Choice of $(\lambda_n,h_n)$ for standard SAIS} 
If $h_n\propto n^{-\kappa}$ for some $\kappa < 1$, then, 
balancing the variance term $a_n$ (up to a log) and the bias term ${h_n^2}$
in Theorem~\ref{th:1} leads to $$h_n\propto n^{- 1/(4+d) }. $$ The corresponding rate, $ n^{-2/(4+d)}$ (up to a log),
is the usual optimal rate in non parametric estimation when the function is at least $2$-times continuously differentiable and the kernel has order $2$  \citep{stone:1980}.



In practice, a slow decrease of $\lambda_n$ would favor an exhaustive visit of the space during what could be called the burn-in phase of the algorithm. Such a tuning of $\lambda_n$ might be appropriate when facing a difficult problem e.g., several modes or large variance of $f$. In contrast, a rapid decrease of $\lambda_n$ could be risky because the algorithm is likely to miss some important parts of the distribution.
In Theorem \ref{th:cvd2}, allowing $\lambda_n$ to go to $0$ only influences the asymptotic variance but the convergence rate remains the same. For instance, if $(\lambda_k)_{k\geq 1}$ was converging to a constant $\lambda_0 > 0$, one would get $\sigma_q^2(x) \int K^2 $, with $q = (1-\lambda_0)f+\lambda_0 q_0$, as asymptotic variance in Theorem~\ref{th:cvd2} which would be fine in many cases as soon as $\lambda_0$ is small enough.

As expressed in  Theorem \ref{th:cvd2}, the only restriction we have on $\lambda_n$ is that it goes to $0$ not too quickly, i.e., $a_n + h_n^4\ll \lambda_n^{1+\epsilon}\ll 1$, for some $\epsilon >0$. 
Under the optimal bandwidth $h_n \propto   n^{-1/(4+d)}$, an appropriate choice is $$\lambda_n \propto  n^{-1/(4+d)}.$$

\paragraph{Choice of $(\lambda_n,h_n,\ell_n)$ for SAIS with subsampling} 
We discuss the subsampling version with $\ell_n = n^{\delta}$ (up to some rounding), where $\delta\in (0,1/2]$ encodes for the degree of subsampling. From Theorem \ref{th:cvd3}, it is reasonable to balance between variance and bias, $a_n\sqrt{n/\ell_n}$ and $h_n^2$, to choose the bandwidth 
$$h_n \propto \ell_n^{-1/(4+d)}. $$ 
Reasonably, the size of the bandwidth increases with the degree of subsampling. When $\delta\leq 1/2 $, $a_n  \propto n^{- (4 +d(1-\delta)) /(8+2d)} $ (up to a log) is negligible before $a_n\sqrt{n/\ell_n} \propto n^{-4\delta/(4+d)}$. Following Theorem \ref{th:cvd3}, one can set
$$\lambda_n  \propto \ell_n^{-2/(4+d)}.$$ 
Note that when $\delta = 1/2$, we recover the same value as the one recommended for standard SAIS.

\paragraph{Updating $q_0$}
A variant of the proposed approach is to use $q_0(\cdot - \mu_k )$ with $\mu_k = \sum_{i=1} ^ k  W_{k,i} X_i$ instead of $q_0$ in the policy \eqref{sampler_update}.
Such a policy should increase efficiency while staying in an heavy-tailed family of densities. This can be handled by a slight modification of our proofs. Because the sampler $q_0(\cdot - \mu_k )$ now depends on $\mathcal F_k$, the condition needed, \ref{cond:f_tail}, must be satisfied uniformly over $k$. This is certainly the case whenever $\mu_k$ is restricted to a compact set of $\mathbb R^d$.

%

\paragraph{Mini-batching}
This is a common extra ingredient of AIS schemes. It consists in grouping the particles into ``mini-batches'' in which the particles have the same distribution. In other words, the  policy $q_{k}$ is frozen over these mini-batches and the update  
of $q_{k}$ is conducted only when $k$ is entering a new mini-batch. This will save the time needed to update and allow to run in parallel the generation of the random variables according to $q_k$.
For clarity reasons, the theoretical study to SAIS has been restricted to the case when each mini-batch is made of one sample point. 
The extension to mini-batches of size $m\geq 1$ with $mT = n$ can be carried out easily. Suppose that $(h_k)_{k\geq 1}$ and $(\lambda_k)_{k\geq 1}$ are just as in Lemma \ref{lemma:initial_consistency}. Define the sequence $(\tilde q_t)_{t\geq 1} $ such that  $\tilde q_t = (1-\tilde \lambda_t) \tilde f_t + \tilde \lambda_t q_0$ where for each $t=1,\ldots, T$, $\tilde \lambda_t = \lambda_{mt }$, $\tilde h_t = h_{mt }$ and $\tilde f_t = f_{mt}$. The mini-batch algorithm corresponds to the standard SAIS algorithm using 
$(q_k)_{k\geq 1}$ given by
$$\tilde q_0,\ldots, \tilde q_0, \tilde q_1,\ldots, \tilde q_1, \ldots $$
Hence we can apply  Lemma \ref{lemma:initial_consistency}. With the obtained convergence rate, we can proceed similarly as before: apply Lemma \ref{lemma:1} and \ref{lemma:2}, just as it has been done for proving Theorem \ref{th:1} 	and \ref{th:cvd2}.


\section{Simulation study}\label{sec:num_results}

The aim of the section is to illustrate the practical behavior of the SAIS algorithms. For the sake of reproducibility, we start by describing precisely the algorithms at use.
Then two basic examples will be considered. 

\subsection{Algorithms}\label{sec:variants}


Here we bring together the pieces of information gathered in Section \ref{sec:remarks} to write down our ultimate SAIS algorithms (with and without subsampling). These very algorithms will be at use in the simulation study.
The allocation, $n\in \mathbb N^*$, is made of $T\in \mathbb N^*$ mini-batches containing $m_0= n/T \in \mathbb N^*$ particles. The set of particles indexes of any stage $t= 1,\ldots, T $ is
$$ B_t =\{ 1+ (t-1) m,\ldots, tm\}.$$ 
Let $(h_t)_{t = 1 ,\ldots, T-1}$ be the sequence of bandwidths and $(\lambda_t)_{t = 1,\ldots, T-1}$ be the sequence of mixture weights. For any $t = 1,\ldots, T$, define the discrete distribution associated to the weighted particles as
\begin{align*}
&\mathbb P_t \propto \sum_{k=1}^{mt}W_{k} \delta_{X_k}.
\end{align*}
The corresponding mean value is denoted $\mu_t$ and the associated kernel density estimate is given by
\begin{align*}
f_t(x)  = \int K_{h_t} (x-y)  \mathbb P_t(dy) , \qquad x\in \mathbb R^d . 
\end{align*}
The policy at use in the mini-batch version is given by, for $t\geq 1$,
\begin{align}\label{eq:mini_batch_policy}
 q_{t} (x) =  (1-\lambda_{t} ) f_{t}(x) + \lambda _{t} q_0( x - \mu_{t})  ,\qquad x\in \mathbb R^d . 
\end{align}
The algorithm includes a burn-in phase which corresponds to the first $T_0$ stages and, roughly speaking, aims at giving a tour in the target's domain. 
During this early phase, the number of points is very small and the importance weights $W_k$ might have a large variance. To avoid the (uncommon) situation where a few weights carry all the mass of $\mathbb P_t $, we use \textit{regularized weights} $W_k^{\eta}$, with $\eta\in (0,1)$, instead of $W_k$. This simple operation will uniformize the weights. The algorithm writes as follows.

\medskip
\noindent \textbf{SAIS with mini-batching}\\
\textbf{Inputs}: Allocation $n$, number of stages $T$, bandwidths $(h_t)_{t = 1,\ldots, T-1}$, mixture weights $(\lambda_t)_{t = 1,\ldots, T-1}$, density $q_0$,  initial mean $ \mu_{\text{start}}$, burn-in parameters $(T_0,\eta)$.
\smallskip\hrule\smallskip
\noindent Initialize $\mu_0 = \mu_{\text{start}}$. For $t= 0,1,\ldots, T-1 $:
\hspace{3cm}\begin{itemize}\itemsep10pt
\item generate $(X_{k})_{k\in B_{t+1} }$ from $ q_{t} $ in \eqref{eq:mini_batch_policy}; compute $ W_{k} =  f_U(X_k) /  q_{t} (X_k) $;
\item if $t\leq T_0$, set $W_{k} = W_{k} ^{\eta}$ for all $k\in B_{t+1}$.
\end{itemize}
\hrule
\medskip

In SAIS with subsampling, the policy is given by, for $t\geq 1$,
\begin{align}\label{eq:mini_batch_policy_sub}
 q_{t} (x) =  (1-\lambda_{t} ) f_{t}^*(x) + \lambda _{t} q_0( x - \mu_{t})  ,\qquad x\in \mathbb R^d ,
\end{align}
where, in contrast with the standard SAIS, a bootstrap step will be needed at each stage to provide $f_t^*$, a bootstrap kernel estimate of $f$. The algorithm is written below.

\medskip
\noindent \textbf{SAIS with mini-batching and subsampling}\\
\textbf{Inputs}: Allocation $n$, number of stages $T$,  subsampling size $(\ell_t)_{t = 1,\ldots, T-1}$, bandwidths $(h_t)_{t = 1,\ldots, T-1}$, mixture weights $(\lambda_t)_{t = 1,\ldots, T-1}$, density $q_0$, initial mean $ \mu_{\text{start}}$, burn-in parameters $(T_0,\eta)$.
\smallskip\hrule\smallskip
\noindent Initialize $\mu_0 = \mu_{\text{start}}$. For $t= 0,1,\ldots, T-1 $:
\hspace{3cm}\begin{itemize}\itemsep10pt
\item if $t\geq 1$, generate $ (X_k^* ) _ {k = 1,\ldots, \ell_t} $ from $\mathbb P_{t}$; set $\mathbb P_{t} ^* \propto \sum_{k=1} ^{\ell_t} \delta_{X_k^*} $ and $$ f_t^*(x)  =    \int K_{h_t} (x - y) \mathbb P_{t}^*(dy);$$
\item generate $(X_{k})_{k\in B_{t+1} }$ from $ q_{t} $ in \eqref{eq:mini_batch_policy_sub}; compute $ W_{k} =  f_U(X_k) /  q_{t} (X_k) $;
\item if $t\leq T_0$, set $W_{k} = W_{k} ^{\eta}$ for all $k\in B_{t+1}$.
\end{itemize}
\hrule
\medskip

\subsection{Methods in competition}

In the simulation study, each method will be compared with an overall allocation equal to $ n = 200000$.

\paragraph{Standard SAIS} 

There is $T = 200$ stages each generating $m = 1000$ particles. For the burn-in phase, we take $T_0 = 20$ and $\eta = 3/4$. Define $n_0 = 10000$. When no subsampling is performed, in agreement with Section \ref{sec:remarks}, the values of $(h_t,\lambda_t)$ are given by 
\begin{align*}
&h_t = (0.4/\sqrt d )   ( 1 + mt / n_0 ) ^{-1/(4+d)}\\
&\lambda_t = 0.25  ( 1 + mt / n_0 ) ^{-1/(4+d)}
\end{align*}
for all $t= 1,\ldots, T-1$ except for $\lambda_t$ during the burn-in phase where we simply set $\lambda_t = 1$, $t=1,\ldots, 9$ and $\lambda_t = .5$, $t=10,\ldots, 19$. In the definition of the bandwidth, the factor $.4/\sqrt d $ corresponds to the standard error estimate in the \textit{Silverman's rule of thumb} \cite{silverman:2018}. Along the section this method is denoted \texttt{SAIS}.

\paragraph{Subsampling SAIS}  Following  Section \ref{sec:remarks} recommendation, at each iteration $t= 1,\ldots, T-1$, the subsampling is carried out with $\ell_t =  10  \lfloor   { (n_0 + mt)^\delta} \rfloor$, where $\delta\in (0,1/2]$. Two versions of subsampling SAIS will be considered: \texttt{SAIS*2} is when  $\delta = 1/2$ and \texttt{SAIS*4} is when  $\delta = 1/4$. The values of $(h_t,\lambda_t)$ are given by 
\begin{align*}
&h_t = (0.4/\sqrt d )   ( 1 + \ell_t / n_0 ) ^{-1/(4+d)}\\
&\lambda_t = 0.25  ( 1 +\ell_t / n_0) ^{-2/(4+d)}
\end{align*}
for all $t= 1,\ldots, T-1$ except during the burn-in phase, where $\lambda_t$ is tuned as before. 

In both SAIS, $q_0$ is the student distribution with covariance matrix $(5  / d) I_d $. The initial mean value $\mu_{\text{start}}$ will change depending on the considered example.


\paragraph{Metropolis-Hastings} Denote by $\phi_{\Sigma}$ the Gaussian density with mean $0$ and covariance matrix $\Sigma$.
A natural competitor is the Metropolis-Hastings algorithm for which two proposals are considered. The random walk version is when the proposal is $\phi_{\Sigma} $ where $\Sigma = (0.4  / d) I_d $. The adaptive Metropolis-Hastings, as introduced in \cite{haario+s+t:2001}, denoted \texttt{AMH}, is when the proposal is $\phi_{\Sigma_n} $ with $\Sigma_n $ the estimated covariance matrix based on the past iterations of the chain. The adaptive proposal is put to work from iteration $1e4$. Before that, \texttt{MH} is used. For both Metropolis-Hastings algorithms, the starting point is $\mu_{\text{start}}$. Because, \texttt{AMH} produces better results than the standard version, we only provide the results of \texttt{AMH}.
 
\paragraph{Wang-Landau} Another natural competitor, which was initially designed to explore target densities with several modes, is the Wang-Landau algorithm \citep{wang+l:2001}, whose convergence properties are studied in \cite{fort+j+k+l+s:2015}. The random walk version, denoted by \texttt{WL}, is when the proposal is $\phi_{\Sigma} $ with $\Sigma =  (0.4  / d) I_d $. The starting point is $\mu_{\text{start}}$. The adaptive version, which consists of a Robbins-Monro type of adaptation as documented in \cite{bornn+j+d+d:2013}, has been tried without improving the results compared to the non-adaptive case. The \texttt{PAWL} package was used to run the Wang-Landau algorithm (without parallel chains and using an initial chain to tune optimally the bins which are parameters of the algorithm).

\subsection{Results}

\subsubsection{Multimodal density}

We revisit the classical example introduced in \cite{cappe+d+g+m+r:2008} in which the target density is 
a mixture of two Gaussian distributions. Let
\begin{align*}
f(x) = .5 \phi_{\Sigma} (x- \mu) + .5 \phi_\Sigma (x + \mu),\qquad x\in \mathbb R^d,
\end{align*}
with $\mu  = (1,\ldots, 1) ^T / (2\sqrt d)$ and $\Sigma = (0.4  / d) I_d $. Note that the Euclidean distance between the two mixture centers is independent of the dimension as it equals $1$. The objective here is to recover both components of the mixture and this is clearly getting more difficult in large dimensions. 

To measure the accuracy of the algorithms, we compute the squared Euclidean distance between the estimated mean and the true mean. For each method, we take $\mu_{\text{start}} = (1,-1,0,\ldots, 0)^T/ \sqrt d$ as starting point. Such a choice for the initial sampler is to prevent the algorithms to take advantage from a very good start as is the case when $\mu_{\text{start}}  $ is $0$. We consider different values for the dimension $d$, namely $\{2,4,8, 12\}$, and all the algorithms are compared using a budget going from $5.10^4$ to $2.10^5$ evaluations of $f$.

The results are presented in Figure \ref{figu:ex1_mixture} (except for the case $d=2$ which was similar to $d =4$). As expected, \texttt{AMH} gives poor results compared to the ones of \texttt{SAIS} and \texttt{WL}. This is because \texttt{AMH} can hardly leave a mode. Among the two other methods, after $2.10^5$ requests to $f$, the three \texttt{SAIS} methods give similar results. Compared to \texttt{WL}, the squared error of \texttt{SAIS} is reduced by at least a factor $10$ in every dimension and even more than that in dimension $12$.

\begin{figure}
\centering
\includegraphics[scale=.16]{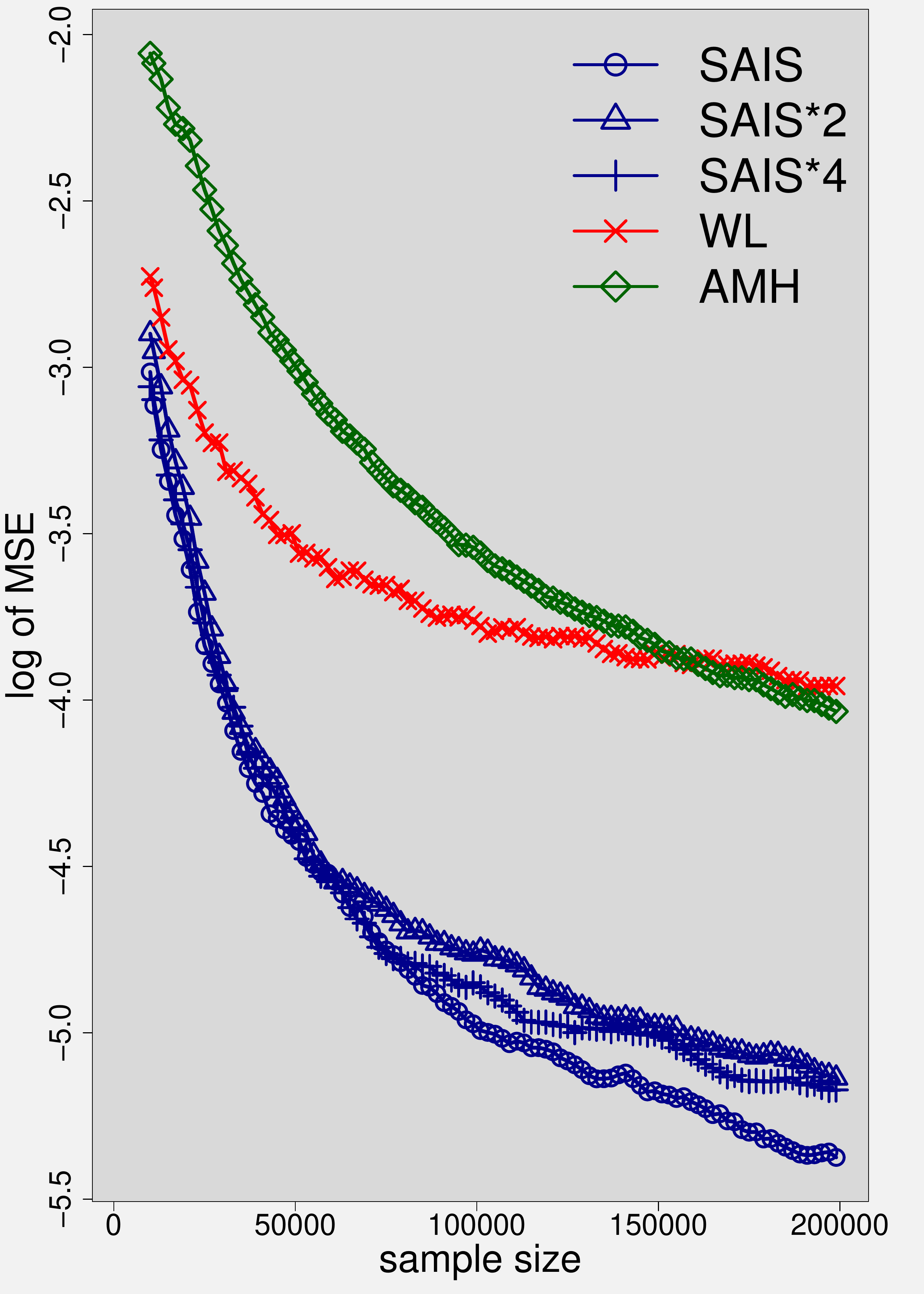}\includegraphics[scale=.16]{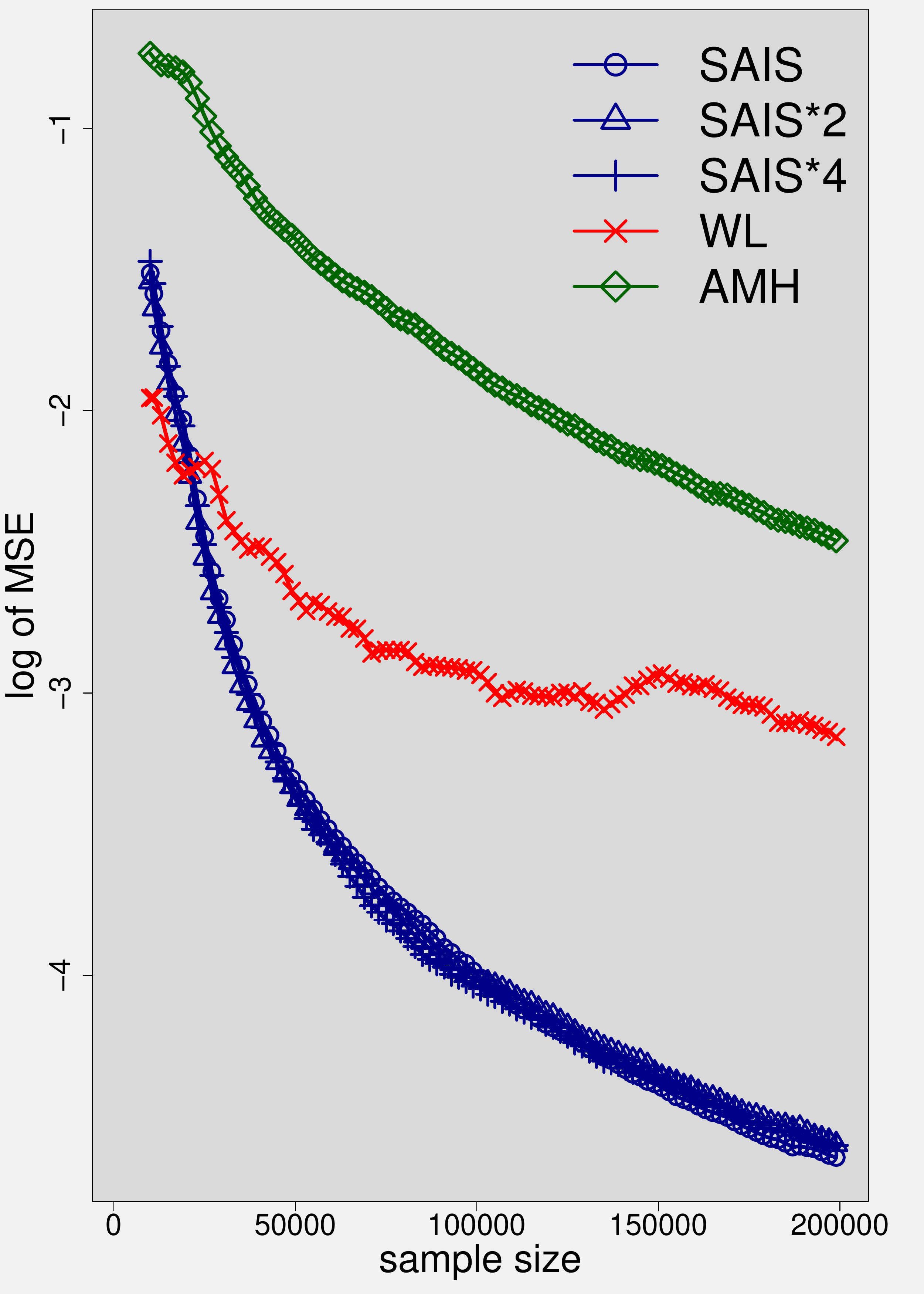}\includegraphics[scale=.16]{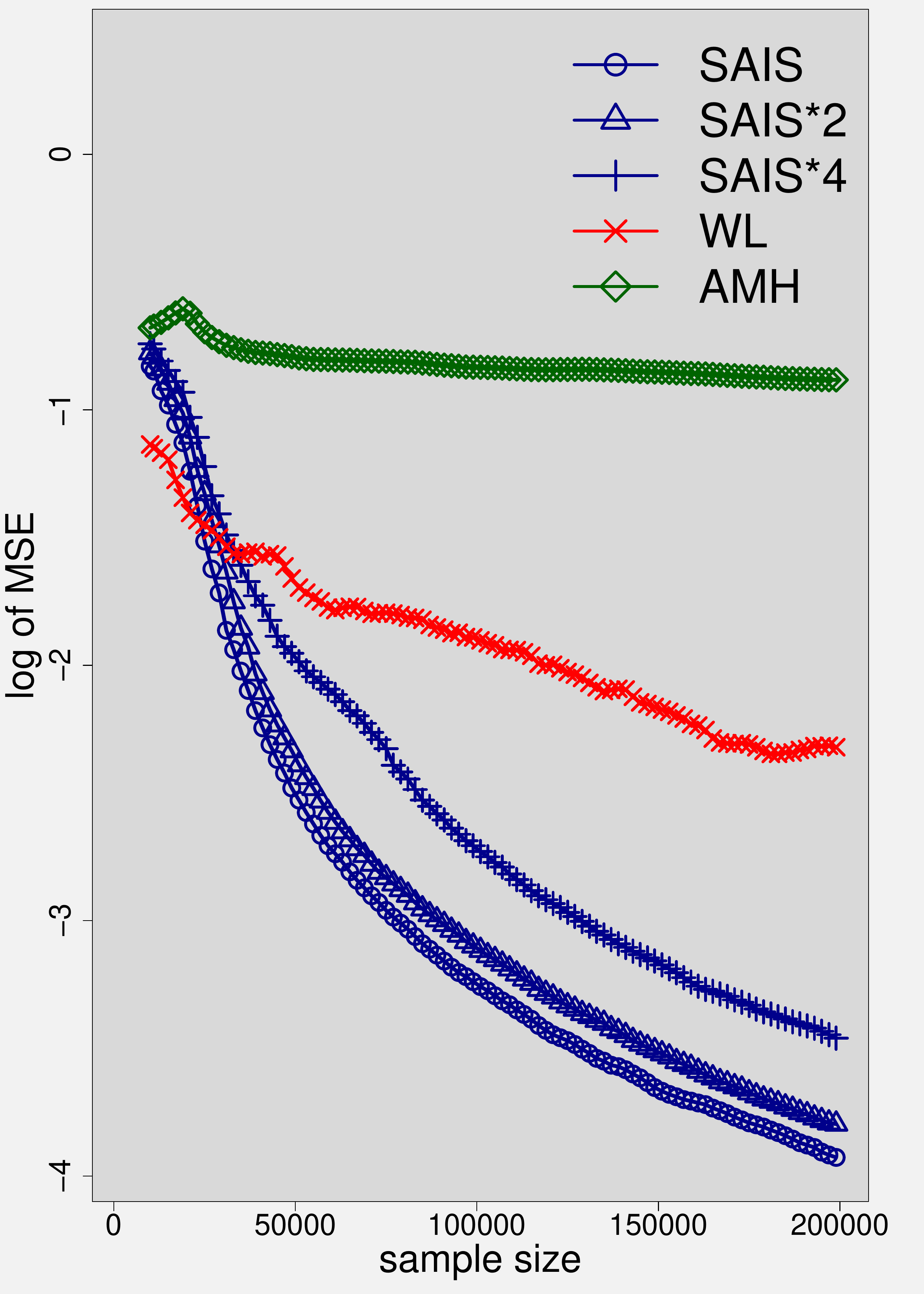}
\caption{\label{figu:ex1_mixture} (multimodal density) From left-to-right $d = 4, 8, 12$. Considered methods are described in the main text. Plotted is the median (based on $50$ independent replicates for each method) of the logarithm of the MSE with respect to the number of requests to the integrand.}
\end{figure}

\subsubsection{Cold start}

To conclude the simulation study, we illustrate the performance of \texttt{SAIS} when the starting 
distribution is far away from the target. In this example, the target distribution is given 
by $f(x)  = \phi_{\Sigma} (x - \mu)$ where $\mu = (5,\ldots, 5)^T /\sqrt d$ and  $\Sigma = (1/d) I_d$ 
whereas the initial distribution (for all the considered methods) has mean $\mu_{\text{start}} = 0$ and covariance $(4/d) I_d$. 
The main goal for the methods in competition is to converge rapidly around $\mu$.
The error is the same as before, the squared Euclidean distance between the true mean and the estimated mean. 
We consider different values for the dimension $d$, namely $\{2,4,8, 12\}$, and all the algorithms are compared using a budget going from $5.10^4$ to $2.10^5$ evaluations of $f$. 

The results are shown in Figure \ref{figu:ex2_cold_start} (except for $d=2$ which was similar to $d=4$). 
In this case, we observe a performance reversal between \texttt{AMH} and \texttt{WL} occurring at $d =8$. 
After that dimension, \texttt{WL} gives better results than \texttt{MH}. The improvement of \texttt{SAIS} compared to \texttt{AMH} and \texttt{WL} is substantial: the squared error is reduced by a factor $100$, in average.

\begin{figure}
\centering
\includegraphics[scale=.16]{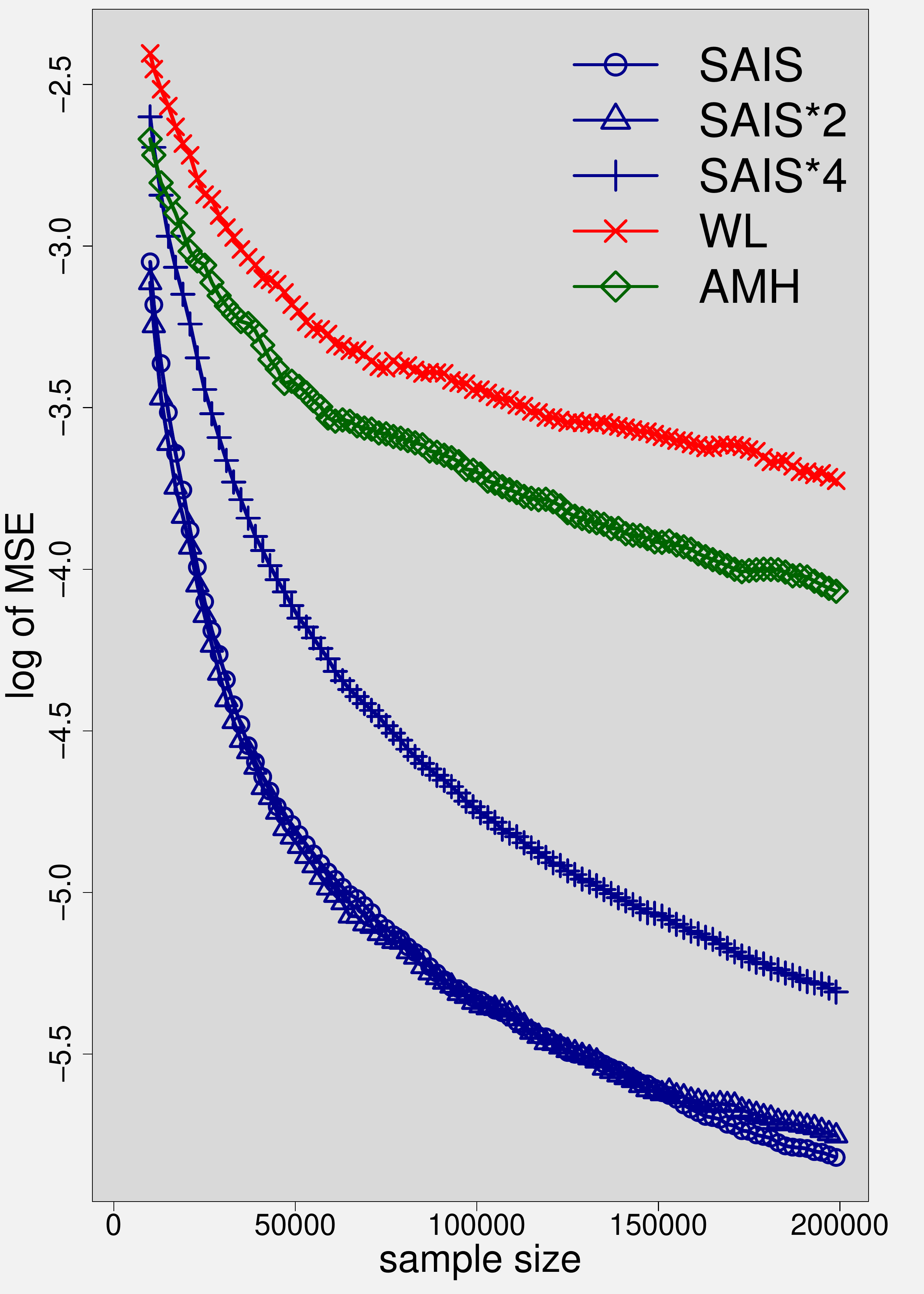}\includegraphics[scale=.16]{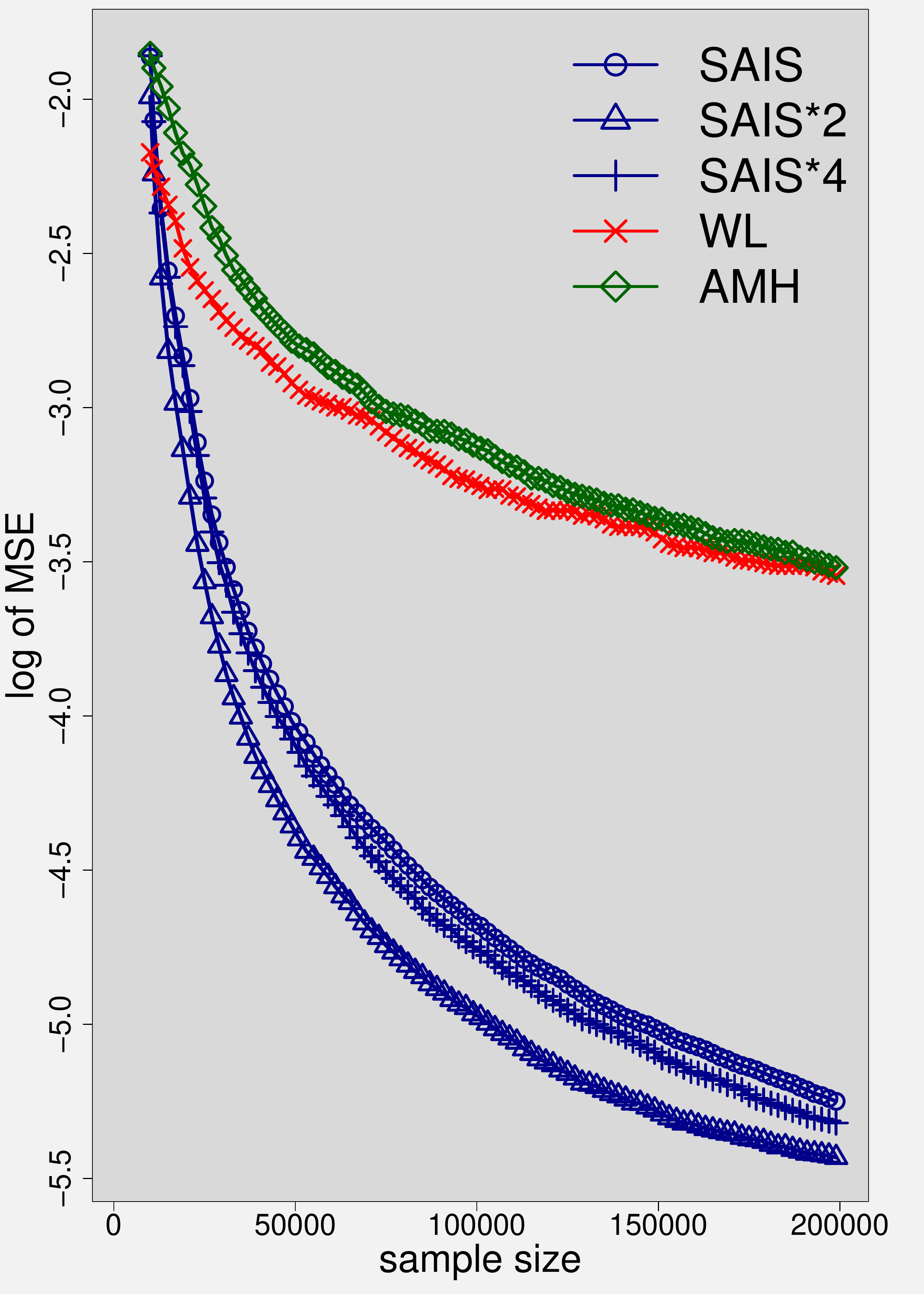}\includegraphics[scale=.16]{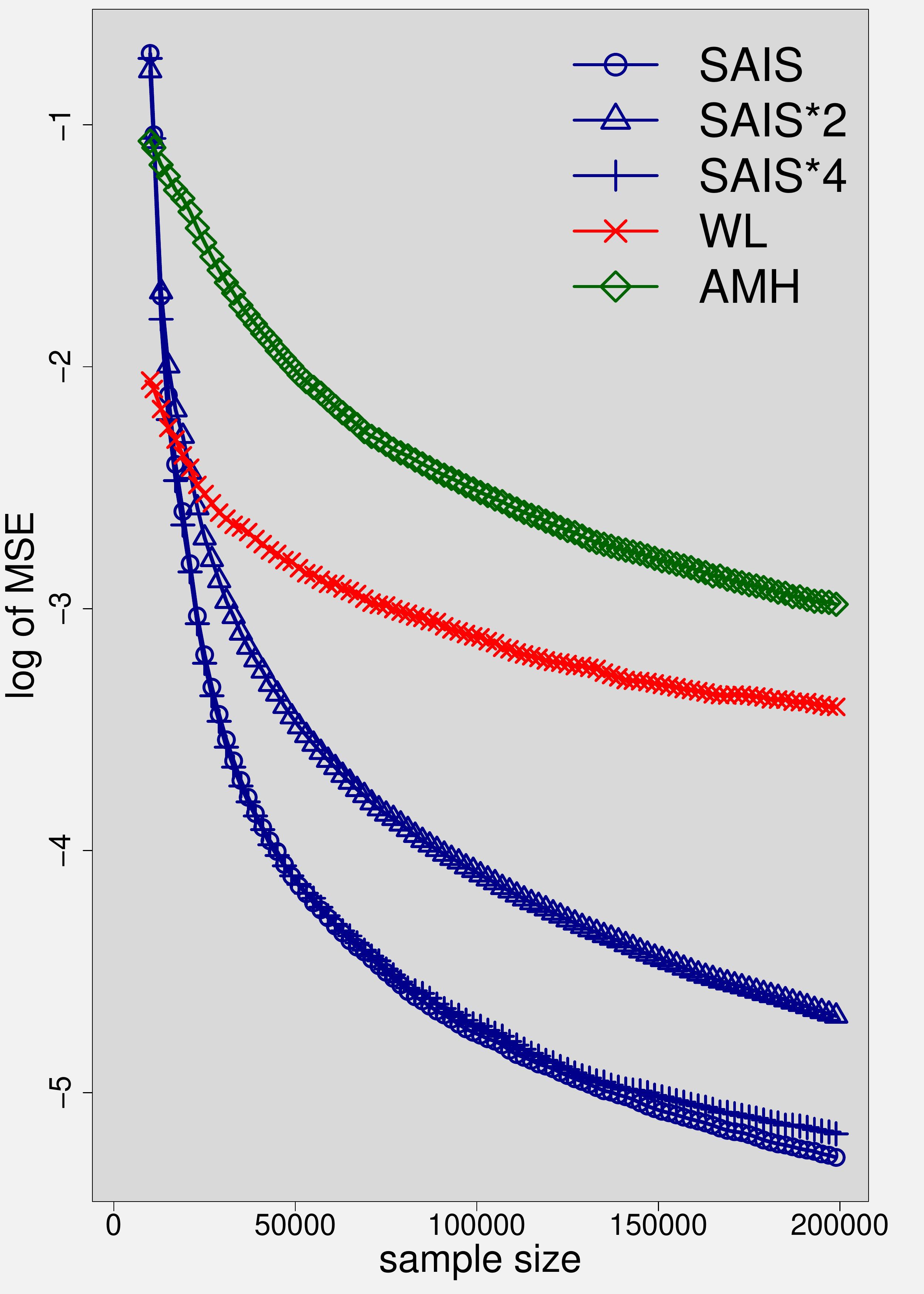}
\caption{\label{figu:ex2_cold_start} (cold start example) From left-to-right $d = 4, 8, 12$. Considered methods are described in the main text. Plotted is the median (based on $50$ independent replicates for each method) of the logarithm of the MSE with respect to the number of requests to the integrand.}
\end{figure}

\subsection{Computational efficiency}

At each stage $t= 1,\ldots, T-1$ of standard SAIS, the computing time needed to evaluate $q_{t}(X_i)$, $i\in B_{t+1} $ is $tm^2$. Consequently, running the first $t$ iterations represents $c_t = m^2 t(t-1) /2$ operations.
Concerning SAIS with subsampling, the situation is different as already discussed in Section \ref{sec:algo}. 
For the first $t$ iterations, an estimate of the number of operations required is then
$ m \sum_{k=1} ^ t \ell_k $, neglecting a log factor (due to multinomial sampling as detailed in Section \ref{sec:algo}). For the standard and the subsampling variants, the graphs representing the accuracy versus the computing time are provided in Figure \ref{fig:efficiency} in a logarithmic scale. We see a clear improvement given by the use of subsampling. For a similar error value, the overall computing time is almost $2$ times smaller in the logarithmic scale. This is a substantial gain as it means that in terms of computing time subsampling SAIS is approximately the square root of standard SAIS.

\begin{figure}
\centering
\includegraphics[scale=.16]{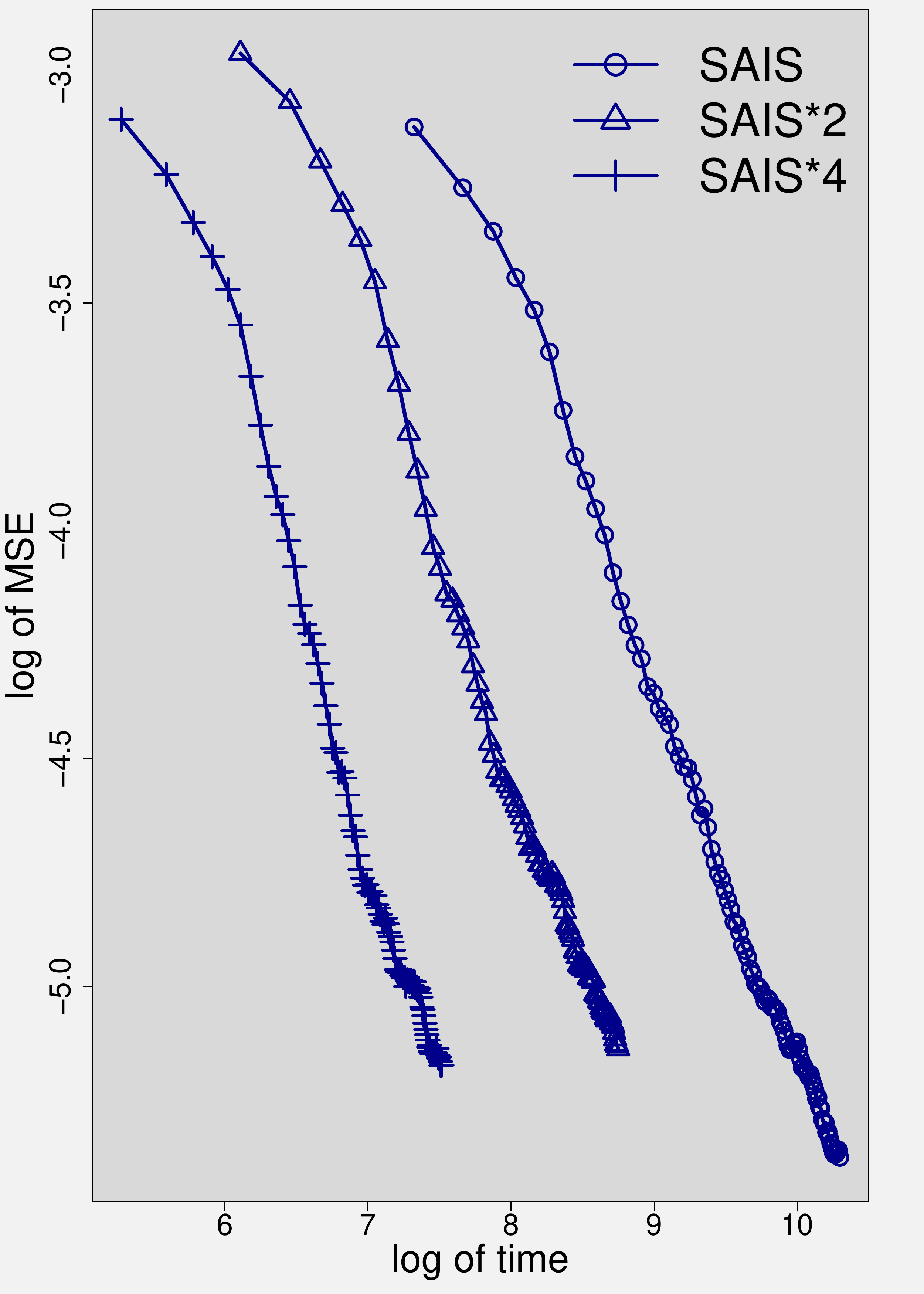}\includegraphics[scale=.16]{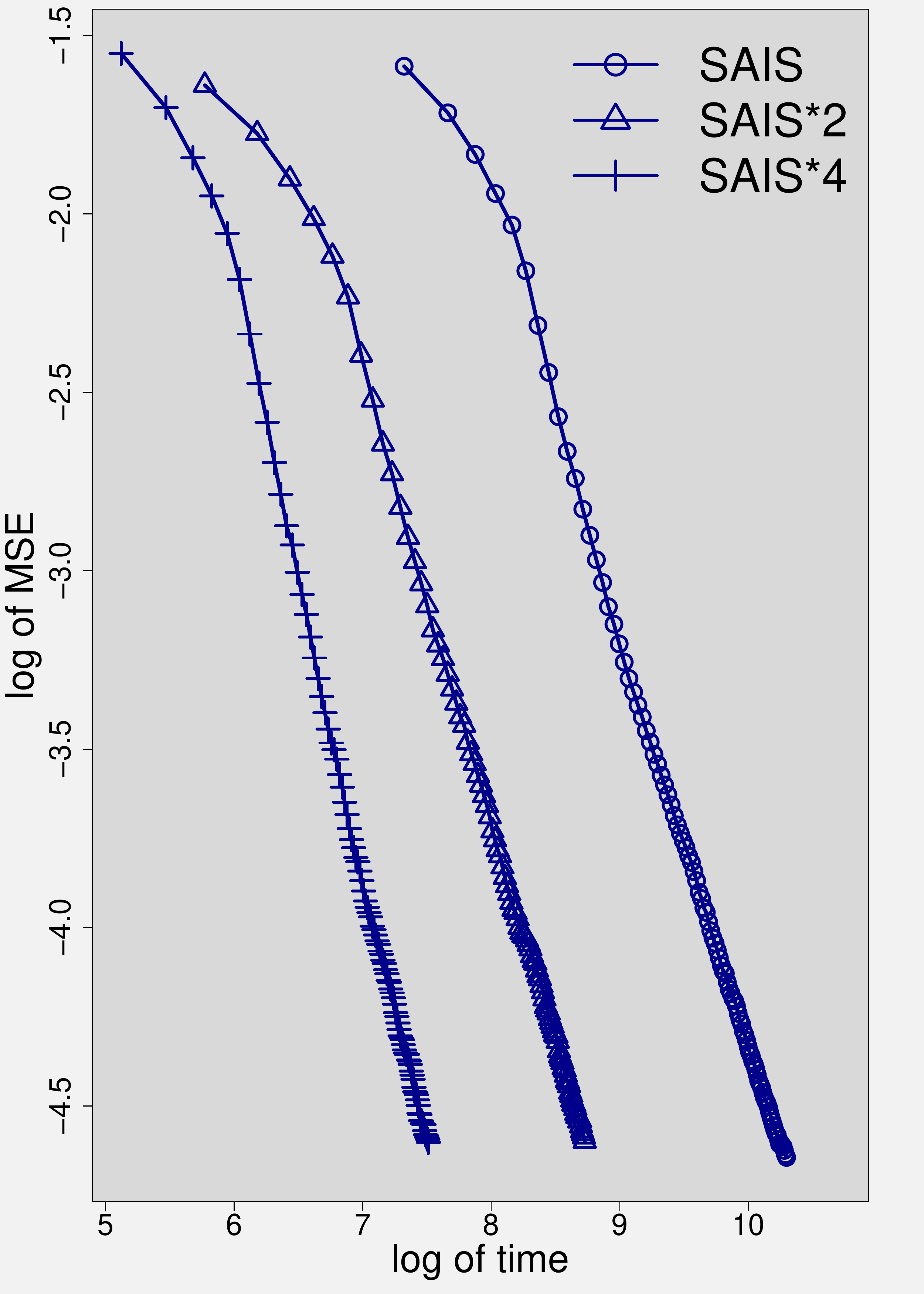}\includegraphics[scale=.16]{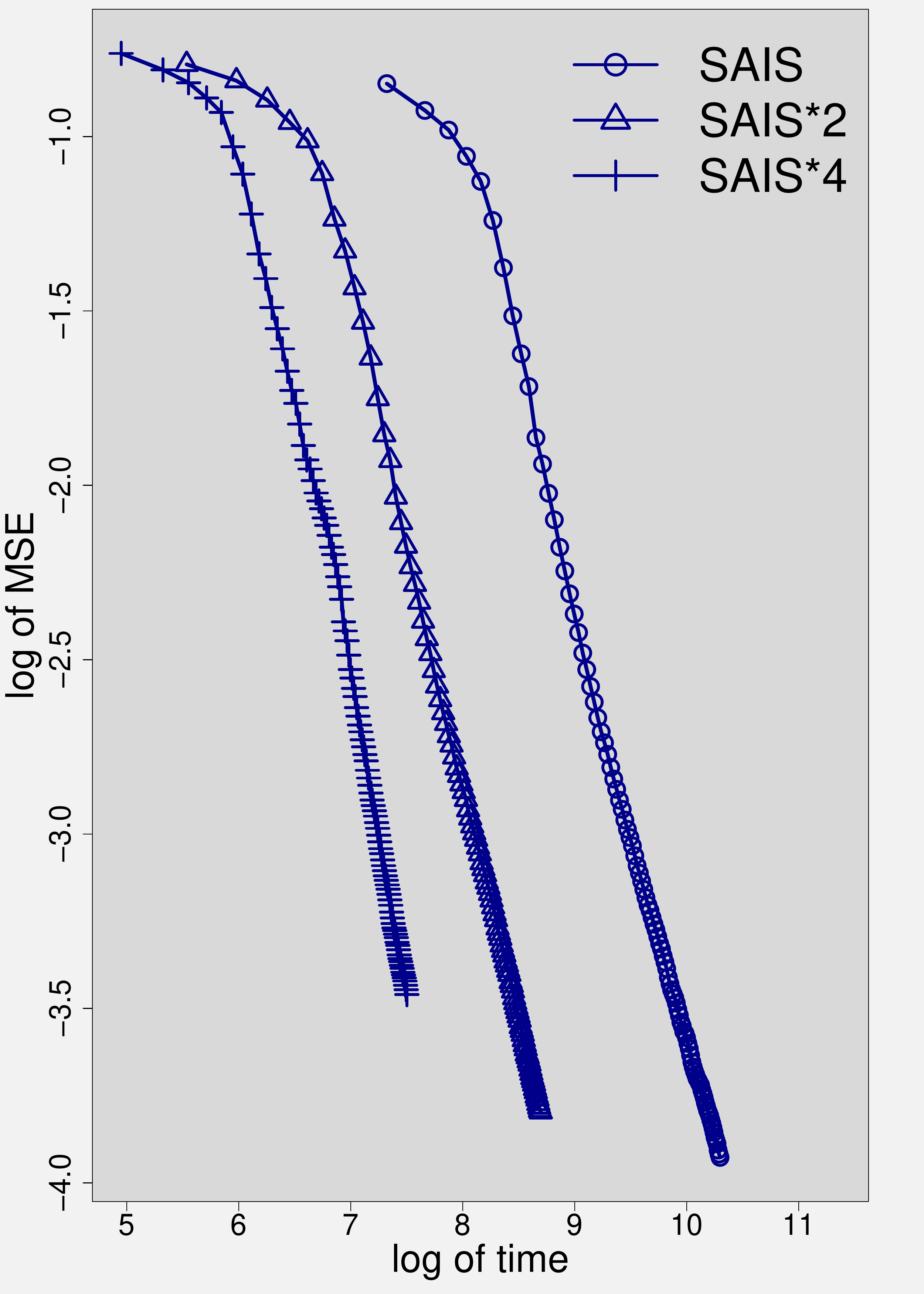}
\caption{\label{fig:efficiency} From left-to-right $d =  4, 8, 12$. Considered methods are described in the main text. Plotted is the median (based on $50$ independent replicates for each method) of the logarithm of the MSE with respect to the number of requests to the integrand.}
\end{figure}

\section{Mathematical proofs}\label{sec:proof}

All the claims of the theorems depend on $f_U(X_i)$ only through $W_{n,i}$, $i=1,\ldots, n$, which is independent of any normalizing constant. As a consequence we can assume without loss of generality that $f_U = f$ in the proofs.

\subsection{Proof of Lemma \ref{lemma_fixed_policy}}\label{append1}

We start with some preliminary remarks. Recall that for each $i\geq 1$ $W_i=f(X_i)/q(X_i)$, and that
\begin{align*}
&\tilde f_{n}  =  f *  K_{h_n},
\end{align*} 
and define 
\begin{align*}
&A_n(x) = n^{-1}  \Big(\sum_{i=1}^n W_i\Big) \left(f_n(x) - \tilde f_n(x)  \right) ,\\
&K_{i} = K_{h_n} (x-X_i) .
\end{align*}
By assumption, $(X_i)_{i\geq 1}$ is a sequence of independent random variables each having density $q$. Moreover, $\sigma_q^2 $ is continuous and bounded on $\mathbb R^d$ and integrable. The assumption on $K$ ensures that $\int K^2 <\infty$.  In particular, we have $$\mathbb E \left(\sum_{i=1}^n \{W_{i} - 1 \}\right)^2  = n \mathbb E \{W_{1} - 1 \} ^2 \leq n \int\sigma_q^2 .$$

Let $x\in \mathbb R^d$. From Slutsky Lemma, because $n^{-1} \left(\sum_{i=1}^n W_i\right) \to 1$ in probability, the proof will be complete as soon as we obtain that $ (nh_n^d)^{1/2} A_n(x) \leadsto \mathcal N ( 0,  \sigma_q^2(x) \int K^2)$.  We have
\begin{align*}
\sqrt{nh^d} \,
A_n(x) 
&=\sqrt{h_n^d/n} \, \left(\sum_{i=1}^n W_i\right) 
  \left( \sum_{i=1}^n W_{n,i}\big(K_{i}-\tilde f_n(x)\big) \right) \\
& = \sqrt{h_n^d/n} \,    { \sum_{i=1}^n W_{i}\big (K_{i}  - \tilde f_n (x)\big)}\\
& =  \sqrt{h_n^d/n} \, \left(  \sum_{i=1}^n
  \big\{ W_i K_{i}  - \tilde f_n (x)\big\}- \tilde f_n (x)\sum_{i=1}^n \{W_{i}-1\}\right),
\end{align*}
Using that $ \tilde f_n (x) \leq \sup_{x\in \mathbb R^d } f(x)$, we obtain that
\begin{align*}
\sqrt{nh_n^d} \, A_n(x) = \sqrt{h_n^d/n} \, 
   \sum_{i=1}^n \big\{ W_i K_{i}-\tilde f_n (x) \big\}    + O_P( h_n^{d/2} ).
\end{align*}
Invoking Slutsky Lemma again, and using that $ h_n\to 0$, it suffices to show that
\begin{align*}
\sqrt{h_n^d/n} \,  \sum_{i=1}^n \big\{W_{i} K_{i} - \tilde f_n (x) \big\}    
    \leadsto \mathcal N \Big( 0 , \sigma_q^2(x) \int K^2 \Big).
\end{align*}
Noting that
$\sum_{i=1}^n \{W_{i} K_{i}-\tilde f_n (x)\} = \sum_{i=1}^n \{W_{i} K_{i}-\mathbb E [W_i K_i] \}$, 
this convergence will be 
obtained by applying the Lindeberg central limit theorem \cite[Theorem 2.27]{vandervaart:1998}. 
We have to check the convergence of covariances  to $\sigma_q^2(x) \int K^2 $ and the Lindeberg condition. 
Because the elements of the previous sum are independent and centered, the covariance convergence means that
\begin{align*}
  h_n^d \big( \mathbb  E[W_{1}^2 K_{1}^2]- \mathbb  E [W_1 K_1] ^2 \big) 
   \to \sigma_q^2(x) \int K^2 .
\end{align*}
Recall that for any integrable and continuous function $g$, we have $(g\ast K_h) (x) \to g(x)$ as $h\to 0$, for all $x\in \mathbb R^d$ (see Section 8 of \cite{folland:2013}).
On the first hand, introducing the kernel $ \tilde K =  K^2 / \int K^2$, it holds that
\begin{align*}
h_n^d  \mathbb E[ W_{1}^2 K_{1}^2] 
&= h_n^d \int  \frac{f(y)^2}{q(y)} K \big((x-y)/h_n\big)^2 / h_n^{2d} \diff y \\
&=\Big(\int K^2\Big) ( \sigma_q^2 * \tilde K_{h_n})(x)  
\longrightarrow \sigma_q^2  (x)\int K^2.
 \end{align*}
On the other hand, one has $ \mathbb E [W_1 K_1]^2   = (f* K_h) (x) ^2  \to f(x)^2$, 
which implies that the second term in the variance is $O( h_n^d)$, negligible.
We finally need to verify the Lindeberg condition, i.e., for any $\epsilon>0$
\begin{align*}
\lim_{n\to \infty } \frac{h_n^d}{n } \sum_{i=1}^n\mathbb E\left[ W_i^2K_{i}^2
  1_{\{| W_i  K_i| >\epsilon \sqrt{n / h_n^{d}} \}}\right]   = 0.
\end{align*}
Because $W_i$ is bounded by $c$ and $K_i$ is bounded by $ h_n^{-d} \sup_{x\in \mathbb R^d} K(x)$ we have that
$ \{ | W_i  K_i| >\epsilon \sqrt{ n / h_n^{d}} \}  \subset  \{ C > \epsilon \sqrt{nh_n^d}\} $ for some $C>0$. But because $nh_n^d \to +\infty$, the previous set is empty for $n>n(\epsilon)$, implying the above statement.
\qed

\subsection{Proof of Lemma \ref{lemma:var_minimizer}}\label{append2}
If $q$ does not dominate $f$ then   $\int  f ^2 / q = +\infty$. If it does, using the Cauchy-Schwarz inequality, we obtain $ 1 = (\int f)^2 = (\int (f/ \sqrt q) \sqrt q )^2 \leq    \int  f ^2 / q = C(q)$. From this we deduce that $q= f$ is an argmin. If now $q$ is such that $\int  f ^2 / q = 1$, then equality holds in the Cauchy-Schwarz inequality meaning that $f  = \kappa q$ a.e. with $\kappa>0$. But $\kappa $ needs to be $1$ because $q$ and $f$ are densities.
\qed

\section{Proof of the preliminary results}\label{append:prelim}

Before entering the proofs of the preliminary results, let us introduce some notation related to the assumptions \ref{cond:f}, \ref{cond:f_tail} and \ref{cond:K}. As the function $f$, $K$ and $q_0$ are bounded, we denote by $U_f>0$, $U_K>0$ and $U_{q_0}>0$ their respective uniform bounds, i.e., for all $x\in \mathbb R^d$,
\begin{align*}
f(x)\leq U_f,\qquad K(x)\leq U_K, \qquad q_0(x)\leq U_{q_0}.
\end{align*}
 The Lipschitz constant of the function $K$ is denoted by $L_K$, that is, for all $(x,y)\in  \mathbb R^d \times \mathbb R^d$, 
\begin{align*}
&|K(x) - K(y) | \leq L_K \|x-y\|.
\end{align*}

\subsection{Proof of Lemma \ref{lemma:initial_consistency}}\label{append:initial}

Define
\begin{align*}
&Z_{n} (x) = \sum_{i=1}^n \big\{ W_i K_{h_n}(x-X_i ) - \tilde f_{n}  \big\},\\
&M_{n}  = \sum_{i=1}^n   \left\{  W_i - 1  \right\}.
\end{align*} 
The proof will follow from the use of $3$ independent lemmas which are now stated and proved.

\begin{lemma}\label{lemma:initial_variance}
Assume \ref{cond:f}, \ref{cond:f_tail}, \ref{cond:K} and work under the general policy \eqref{eq:general_policy}. If  $a_n^2  \ll \lambda_n$, then we have, for any $s>0$,
\begin{align}\label{Mnvit}
&|M_n| =O(\lambda_n^{-1/2} \sqrt{n\log n}) = o( n) ,\quad \text{a.s.}\\
\label{initZ} 
&\sup_{\|x\| \leq n^s } | Z_n(x)| = O (na_n\lambda_n^{-1/2}),\quad \text{a.s.}
\end{align}
Moreover, if for any $s>0$, with probability $1$,
\begin{align}\label{xpetit0}
&\sup_{\|x\|\leq n^s} | f_{n}(y) - \tilde f_{n}(y)  |=O\big(a_n\lambda_n^{-1/2}\big) .
\end{align} 
\end{lemma}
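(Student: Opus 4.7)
The three claims pivot on the observation that, for fixed $n$ and $x$, both $M_n$ and $Z_n(x)$ are cumulative sums of martingale differences with respect to $(\mathcal F_i)_{i\ge 0}$: one checks that $\mathbb E[W_i \mid \mathcal F_{i-1}] = 1$ and $\mathbb E[W_i K_{h_n}(x-X_i)\mid \mathcal F_{i-1}] = (f * K_{h_n})(x) = \tilde f_n(x)$. My plan is to apply Freedman's martingale Bernstein inequality pointwise to each, lift the bound on $Z_n$ to a uniform one via a covering argument, and finally deduce \eqref{xpetit0} from the algebraic identity
\[ f_n(x) - \tilde f_n(x) = \frac{Z_n(x) - M_n\, \tilde f_n(x)}{n + M_n}, \]
together with $|M_n| = o(n)$ from \eqref{Mnvit}.

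The key structural input comes from the mixture form of the policy: under \eqref{eq:general_policy}, $q_k \ge \lambda_k q_0$, so \ref{cond:f_tail} applied with $\eta = 1$ gives $W_i \le c_1/\lambda_{i-1}$ a.s.\ and $\mathbb E[W_i^2 \mid \mathcal F_{i-1}] = \int f^2/q_{i-1} \le c_1/\lambda_{i-1}$. For $M_n$, this yields predictable quadratic variation $v_n = O(n/\lambda_n)$ and increment bound $b_n = O(1/\lambda_n)$; Freedman's inequality with $t_n = C\sqrt{n\log n/\lambda_n}$ produces a tail of order $\exp(-C^2 \log n/(2 + o(1)))$, the corrective term being $b_n t_n/v_n = O(\sqrt{\log n/(n\lambda_n)}) = o(1)$ by $a_n^2 \ll \lambda_n$ and $h_n$ bounded. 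Choosing $C$ large and invoking Borel--Cantelli delivers \eqref{Mnvit}; the $o(n)$ part is immediate. Running the same scheme for $Z_n(x)$ and using $K^2 \le U_K K$ gives conditional variance bounded by $(c_1 U_K/\lambda_{i-1}) \tilde f_n(x)/h_n^d$, hence $v_n = O(n/(\lambda_n h_n^d))$ and $b_n = O(1/(\lambda_n h_n^d))$; the choice $t_n = C n a_n/\sqrt{\lambda_n} = C\sqrt{n\log n/(\lambda_n h_n^d)}$ yields the same exponential tail, the corrective term being exactly $a_n/\sqrt{\lambda_n} = o(1)$.

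To upgrade the pointwise bound on $Z_n$ into a supremum over $\{\|x\| \le n^s\}$, I will cover this set by a cubic grid of mesh $\epsilon_n = n^{-\alpha}$, producing $n^{O(1)}$ grid points, and control the oscillation of $Z_n$ between adjacent grid points by its Lipschitz constant. From \ref{cond:K} (so that $K_{h_n}$ is Lipschitz of constant $L_K/h_n^{d+1}$) together with $W_i \le c_1/\lambda_i$, this Lipschitz constant is $O(n/(\lambda_n h_n^{d+1}))$. Picking $\alpha$ large makes $\epsilon_n$ times this constant negligible compared to $n a_n/\sqrt{\lambda_n}$, which is feasible since all quantities involved are polynomial in $n$; picking $C$ large enough then keeps the union bound over the $n^{O(1)}$ grid points summable. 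This gives \eqref{initZ}. The statement \eqref{xpetit0} is then obtained by substituting the bounds on $Z_n$ and $M_n$ into the displayed identity and using $(n + M_n)^{-1} = n^{-1}(1 + o(1))$: the two resulting terms are $O(a_n/\sqrt{\lambda_n})$ and $O(\sqrt{\log n/(n\lambda_n)})$, the latter dominated by the former since $h_n$ is bounded.

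The main point of delicacy I foresee is the covering step: one must check that the Lipschitz constant of $Z_n$, though polynomial in $n$ (of order $h_n^{-(d+1)}$), is still dwarfed by the Freedman concentration rate once the grid mesh is taken polynomially small; the rest of the argument is essentially an exercise in applying a sharp martingale concentration inequality to weights that are controlled by the safe density's lower bound on the policy.
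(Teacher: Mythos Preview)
Your proposal is correct and follows essentially the same route as the paper: Freedman's Bernstein inequality for martingales applied to $M_n$ and to $Z_n(x)$, a covering argument to pass from pointwise to uniform control of $Z_n$, Borel--Cantelli to upgrade to almost-sure bounds, and finally the algebraic identity $f_n-\tilde f_n=(Z_n-\tilde f_n M_n)/(n+M_n)$ to deduce \eqref{xpetit0}. The only cosmetic difference is that the paper encapsulates the covering step in a general corollary (their Corollary~\ref{th:freedmanZ}), which bounds the oscillation term in expectation and reapplies Freedman to it, whereas you use a cruder deterministic Lipschitz bound $O(n/(\lambda_n h_n^{d+1}))$ and a finer mesh; both work because $a_n^2\ll\lambda_n$ forces $h_n^d\gg \log n/n$, so all the relevant quantities are polynomial in $n$ and the union bound over $n^{O(1)}$ grid points is harmless once the Freedman constant is taken large enough.
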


\begin{proof}
Equation \eqref{xpetit0} will follow from the decomposition
\begin{align}\label{df1} 
f_{n}(x)  - \tilde f_{n}(x) = \frac{Z_n(x) - \tilde f_{n}(x) M_n}{ M_n+n}.
\end{align}
Using \eqref{Mnvit} and \eqref{initZ}, combined with $\log(n) / (n\lambda_n) \to 0$
(since  $a_n^2  \ll \lambda_n$), we obtain that, almost surely,
\begin{align*}
&\sup_{\|x\|\le n^s}\big | f_{n}(x)-\tilde f_{n}(x)\big|=O\big(a_n\lambda_n^{-1/2}\big) + O\big(\lambda_n^{-1/2} \sqrt{\log(n) /n }  \big) = O\big(a_n\lambda_n^{-1/2}\big)  .
\end{align*}

The proof of (\ref{Mnvit}) follows from Theorem \ref{th:freedman}
 with 
$Y_i =   f(X_i) /q_{i-1}(X_i) - 1 $. Recall the definition of $c_\eta $ in \ref{cond:f_tail}. In particular, $f(x) \leq c_1 q_0(x) $ for all $x\in \mathbb R^d$. 
Note that, for any $i=1,\ldots, n$,
\begin{align}
\label{important_bound_1}&\sup_{x\in\mathbb R^d}\frac{f(x)}{q_{i-1}(x)} \leq c_1 \lambda_n^{-1}.
\end{align}
Since
$ |Y_i| \le{m} = \lambda_n^{-1}c_1 + 1$, and the quadratic variation is  not larger than
$v = n \lambda_n^{-1} c_1 $, we get
\begin{align*}
\mathbb P\big( |M_n|\geq t \big) \le 2\exp\left( -\frac{Ct^2\lambda_n }{n+t} \right),
\end{align*}
for some constant $C>0$ depending on $f$ and $q_0$. We conclude by taking $t = \gamma\sqrt{n\log n/\lambda_n}$ for $\gamma$ large enough
and use the Borel-Cantelli lemma.

We now consider (\ref{initZ}). Let us apply Corollary~\ref{th:freedmanZ} with  $\varepsilon=h_ n^{d+1}n^{-1}$, $\Omega_1=\Omega$, and 
\begin{align*}
&W_i =\frac{f(X_i)}{q_{i-1}(X_i)},\qquad\mu(dx)=f(x)dx, \qquad M(x) = c_1\lambda_n^{-1} .
\end{align*} 
It remains to choose $t$ and evaluate $m$, $\tilde v$ and $\tau$. 
We have, with the notation of Corollary~\ref{th:freedmanZ}
\begin{align*}
&m=U_K(1+c_1\lambda_n^{-1} ) h_n^{-d}\le C\lambda_n^{-1}h_n^{-d}\\
&v= n h_n^{-d} U_K \int M(u) K_{h_n}(x-u )\mu(du)
   \le C  nh_n^{-d}\lambda_n^{-1}\\
&\tau= 2L_K\varepsilon nh_n^{-d-1} =2L_K\\
&\tilde v=\max(v,2m\tau)\le Cnh_n^{-d}\lambda_n^{-1}
\end{align*} 
for some $C>0$.
Taking $t = \gamma\sqrt{n h_n^{-d}\lambda_n^{-1}\log n}$ for $\gamma\ge 1$, the conclusion of Corollary~\ref{th:freedmanZ} is
\begin{align*}
&\mathbb P\Big(\sup_{\|x\|\le n^s}|Z_n(x)|>t+\tau\Big)
  \le 4\Big(1+\frac{2n^{s+1}}{h_n^{d+1}}\Big)^d\exp\left(-\frac{t^2}{8(\tilde v+2mt/3)} \right).
\end{align*}
The term in the exponential is smaller than
\begin{align*}
-\frac{\gamma^2n\log n}{8Cn(1+\gamma\sqrt{ \log n/(n h_n^d\lambda_n)})}.
\end{align*}
Because $a_n^2=o (\lambda_n)$, the denominator is dominated by $ 8Cn$
and we get, since $t\ge\tau$ (for $n$ large enough), 
\begin{align*}
&\mathbb P\Big(\sup_{\|x\|\le n^s}|Z_n(x)|>2\gamma\sqrt{n h_n^{-d}\lambda_n^{-1}\log n}\Big)
  \le 4\Big(1+\frac{2n^{s+1}}{h_n^{d+1}}\Big)^d \exp\left(-C_1\gamma^2\log n\right)
\end{align*}
for some $C_1>0$. With $\gamma $ large enough, we obtain
\begin{align*}
\sum_{n\geq 1} \mathbb P \left(\sup_{\|x\|\le n^s} | Z_{n}( x)| 
  >2\gamma \sqrt{ n \log(n) / h_n^d \lambda_n} \right) < +\infty ,
\end{align*}
which by the Borel-Cantelli lemma implies (\ref{initZ}).
\end{proof}

\begin{lemma}\label{lemma:bias}
Under \ref{cond:f} and \ref{cond:K}, it holds that 
$\sup_{x\in \mathbb R^d} \big|\tilde f_n(x)-f(x)\big| = O(h_n^2)$.
\end{lemma}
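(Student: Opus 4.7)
The plan is to rely on a second-order Taylor expansion of $f$, using the vanishing first moment of $K$ together with the uniform bound on the second derivatives of $f$. Since $\tilde f_n = f * K_{h_n}$, after the change of variable $u = (x-y)/h_n$ I would write
\begin{align*}
\tilde f_n(x) - f(x) = \int \bigl(f(x - h_n u) - f(x)\bigr) K(u)\,\diff u .
\end{align*}

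Next, by assumption \ref{cond:f}, $f$ is twice continuously differentiable with uniformly bounded Hessian; let $M = \sup_{x \in \mathbb R^d} \|D^2 f(x)\|$. Taylor's formula with integral remainder (or Lagrange form) gives, for every $x$ and $u$,
\begin{align*}
f(x - h_n u) - f(x) = -h_n\, u^\top \nabla f(x) + \tfrac{h_n^2}{2}\, u^\top D^2 f(\xi_{x,u}) u,
\end{align*}
for some $\xi_{x,u}$ on the segment joining $x$ and $x - h_n u$. Integrating against $K$, the linear term vanishes thanks to the centering assumption $\int u K(u)\,\diff u = 0$ in \ref{cond:K}, so only the quadratic remainder survives.

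Finally I would bound the remainder uniformly:
\begin{align*}
|\tilde f_n(x) - f(x)| \le \tfrac{h_n^2}{2} \int \bigl| u^\top D^2 f(\xi_{x,u}) u \bigr|\, K(u)\, \diff u \le \tfrac{M h_n^2}{2} \int \|u\|^2 K(u)\, \diff u,
\end{align*}
and the last integral is finite by \ref{cond:K}. Since the right-hand side does not depend on $x$, taking the supremum yields $\sup_{x\in \mathbb R^d}|\tilde f_n(x)-f(x)| = O(h_n^2)$, which concludes the proof. There is no real obstacle here; the only minor point to keep in mind is that \ref{cond:K} only requires $\int \|u\|^2 K(u)\,\diff u < \infty$ (not all mixed moments), but the scalar bound $|u^\top D^2 f \cdot u| \le M\|u\|^2$ is exactly what is needed to match that integrability.
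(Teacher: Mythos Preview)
Your proof is correct and follows essentially the same route as the paper: write $\tilde f_n(x)-f(x)$ as $\int(f(x\pm h_n u)-f(x))K(u)\,\diff u$, use the centering $\int uK=0$ to kill the first-order term, and bound the second-order remainder by $\tfrac{h_n^2}{2}\sup\|D^2 f\|\int\|u\|^2K$. The only cosmetic difference is the sign of the substitution variable.
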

\begin{proof}
Write
\begin{align*}
\big|\tilde f_n(x)-f(x)\big|
&=\Big|\int\big(f(x+h_ny)-f(x)\big)K(y)dy\Big|\\
&=\Big|\int\big(f(x+h_ny)-f(x)-h_n\langle y,\nabla f(x)\rangle\big)K(y)dy\Big|\\
&\le \frac{h_n^2}2\sup_{x\in \mathbb R^d} |\nabla^2 f(x) |\Big(\int \|y\|^2K(y)dy\Big).\qedhere
\end{align*}
\end{proof}

\begin{lemma} \label{lemma:fnxgrand} 
Assume \ref{cond:f}, \ref{cond:f_tail}, \ref{cond:K} and work under the general policy \eqref{eq:general_policy}. If  $a_n^2 \ll \lambda_n$, then there exists $s_0>0$ large enough such that
\begin{align}\label{fnan}
&\sup_{\|x\|> n^{s_0}} f_n(x) = o(a_n),\quad a.s.,\\
\label{important_bound_0}
&\sup_{\|x\| >   n^{s_0}} f(x)=o(a_n).
\end{align}  
\end{lemma}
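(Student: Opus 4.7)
The two claims rest on the polynomial tail bounds supplied by \ref{cond:f_tail} and \ref{cond:K}. Equation \eqref{important_bound_0} follows directly from \ref{cond:f_tail}: given any $\rho>0$, taking $\eta=r_0/\rho\in(0,1]$ (possible once $\rho\ge r_0$) gives $f(x)\le (c_\eta C_0)^{1/\eta}(1+\|x\|)^{-\rho}$. The standing condition $a_n^2\ll\lambda_n\le 1$ forces $h_n^d\gg\log n/n$, so $a_n^{-1}$ is at most polynomial in $n$; choosing $\rho$ large enough immediately yields $\sup_{\|x\|>n^{s_0}}f(x)\le C_\rho n^{-s_0\rho}=o(a_n)$.

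For \eqref{fnan}, I would split, for some $s_1\in(0,s_0)$,
\begin{align*}
f_n(x)=\sum_{k:\|X_k\|\le n^{s_1}}W_{n,k}K_{h_n}(x-X_k)+\sum_{k:\|X_k\|>n^{s_1}}W_{n,k}K_{h_n}(x-X_k).
\end{align*}
For $\|x\|>n^{s_0}$ the first sum involves only $X_k$ with $\|x-X_k\|\ge \tfrac12 n^{s_0}$ for $n$ large, so \ref{cond:K} gives $K_{h_n}(x-X_k)\le C_K h_n^{-d}(n^{s_0}/(2h_n))^{-r_K}$; since $\sum_k W_{n,k}=1$, the first sum is of order $h_n^{r_K-d}n^{-s_0 r_K}$, which (using that $h_n$ is polynomially bounded below) is $o(a_n)$ for $s_0$ sufficiently large. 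The second sum is bounded by $U_K h_n^{-d}\, T_n(n^{s_1})/(n+M_n)$ with $T_n(R):=\sum_{k=1}^n W_k 1_{\{\|X_k\|>R\}}$; by \eqref{Mnvit}, $n+M_n\sim n$ almost surely, and the conditional mean of $W_k 1_{\{\|X_k\|>R\}}$ is $\int_{\|x\|>R}f\,dx=O(R^{d-\rho})$ for any $\rho$ by the bound just derived for $f$. A Freedman-type inequality (as in the proof of \eqref{Mnvit}), using $W_k\le c_1\lambda_n^{-1}$ and quadratic variation $O(n\lambda_n^{-1}R^{d-\rho})$, combined with Borel--Cantelli, yields $T_n(n^{s_1})=o(\sqrt{n h_n^d\log n})$ a.s.\ once $\rho$ is taken large enough, which makes the contribution of the second sum $o(a_n)$.

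The delicate point is that the two halves put opposing pressures on $s_1$: a small $s_1$ helps the kernel-tail bound for the first sum, whereas a large $s_1\rho$ is needed to control the second sum. What rescues the argument is that the exponent $\rho$ in the $f$-tail bound can be made arbitrarily large via \ref{cond:f_tail}, so any fixed $s_1>0$ can be accommodated by choosing $\rho$ accordingly. Apart from this, the only real work is the polynomial bookkeeping between $h_n$, $\lambda_n$ and $a_n$, and the Borel--Cantelli step with summable probabilities.
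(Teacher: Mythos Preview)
Your treatment of \eqref{important_bound_0} and of the ``near'' half of the split for \eqref{fnan} is essentially the paper's argument. The gap is in the ``far'' half. Running Freedman with the crude increment bound $m=c_1\lambda_n^{-1}$ forces an unavoidable term of order $m\log n=\lambda_n^{-1}\log n$ in the almost-sure bound on $T_n(n^{s_1})$: the Bernstein denominator $v+mt/3$ makes the exponent behave like $-3t/(2m)$ for large $t$, so summable probabilities require $t\gtrsim m\log n$, independently of $\rho$. Asking $\lambda_n^{-1}\log n=o(\sqrt{nh_n^d\log n})$ is exactly $\lambda_n\gg a_n$, strictly stronger than the hypothesis $a_n^2\ll\lambda_n$ (take for instance $\lambda_n=a_n^{3/2}$). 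Enlarging $\rho$ shrinks the variance and the mean, but does nothing to this term.

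The remedy---and the paper's route---is to sharpen the bound on the truncated weight itself rather than appeal to concentration. Since $q_{k-1}\ge\lambda_{k-1}q_0$ and \ref{cond:f_tail} with small $\eta$ forces $f/q_0\le C_p(1+\|\cdot\|)^{-p}$ at infinity for every $p\ge 1$, one has the \emph{deterministic} estimate
\[
W_k\,\mathds 1_{\{\|X_k\|>R\}}\le \lambda_n^{-1}\,\frac{f(X_k)}{q_0(X_k)}\,\mathds 1_{\{\|X_k\|>R\}}\le C_p\,\lambda_n^{-1}(1+R)^{-p}.
\]
Summing over $k$ gives $T_n(R)\le nC_p\lambda_n^{-1}(1+R)^{-p}$, whence
\[
\frac{h_n^{-d}}{n}\,T_n(n^{s_1})\le C_p\,\lambda_n^{-1}h_n^{-d}n^{-ps_1},
\]
and since $\lambda_n^{-1}h_n^{-d}\ll a_n^{-2}h_n^{-d}=n/\log n$ this is $o(n^{1-ps_1})=o(a_n)$ once $p$ is taken large. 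No concentration inequality is needed; the polynomial decay of $f/q_0$ (not just of $f$) does all the work. The paper also dispenses with the extra parameter $s_1$, splitting directly at $n^{s_0}/2$.
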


\begin{proof} 
Let us start with (\ref{fnan}). Because $a_n^2\ll \lambda_n $, one has $nh_n^{d} \to \infty$ and $\log(n)/n \ll \lambda_n  $. We write 
\begin{align*}
f_n (x) = \sum_{i=1}^nW_{n,i} K_{h_n} ( x-X_i   ) ( \mathds 1_{\{\|X_i\|\leq  n^{s_0}/2\}} + \mathds 1_{\{\|X_i\|>  n^{s_0}/2\}})
\end{align*}
and treat both term separately. Using that $\sum_{i=1}^n W_{n,i} = 1$ and \ref{cond:K}, we find for any $\|x\|>n^{s_0}$,
 \begin{align*}
\sum_{i=1}^nW_{n,i} K_{h_n} ( x-X_i   )\mathds 1_{\{\|X_i\|\leq  n^{s_0}/2\}} 
&\leq \sup_{ \|y\|\leq n^{s_0}/2} K_{h_n} ( x- y  )\\
&= C_K h_n^{-d} \sup_{\|y\|\leq n^{s_0} /2 }   (1 + \|x-y \| / h_ n ) ^{-r_K} \\
& =C_K h_n^{-d}  (1 +  n^{s_0}/(2 h_ n )  ) ^{-r_K}.
\end{align*}
 This bound being $ O ( h_n^{-d+ r_K }  n^{-s_0 r_K} )$ 
and $nh_n^{d} \to \infty$, one might choose $s_0$ to make it $ O(n^{-1/2}) = o(a_n)$. For the other term, start with the bound
\begin{align*}
\sum_{i=1}^n&W_{n,i} K_{h_n} ( x-X_i   )  \mathds 1_{\{\|X_i\|>  n^{s_0}/2\}}\leq U_K h_n^{-d} \sum_{i=1}^n W_{n,i}  \mathds 1_{\{\|X_i\|>  n^{s_0}/2\}}.
\end{align*}
Because $\log(n)/n \ll  \lambda_n  $, we can use (\ref{Mnvit}) to obtain, with probability $1$,
\begin{align*}
\sum_{i=1}^n W_{n,i} \mathds 1_{\{\|X_i\|>  n^{s_0}/2\}} &=\frac1{M_n+n}\sum_{i=1}^n\frac{f(X_i)}{q_{i-1}(X_i)} \mathds 1_{\{\|X_i\|> n^{s_0}/2\}}  \\
&\le O(1) (n\lambda_n  ) ^{-1} \sum_{i=1}^n \frac{f(X_i)}{ q_0(X_i)} \mathds 1_{\{\|X_i\|> n^{s_0}/2\}} .
\end{align*}
Notice that assumption \ref{cond:f_tail}, with $\varepsilon$ small, implies that for any $p\geq 1$, there exists $C_p>0$ such that
\begin{align}\label{toimp}
f(x)/q_0(x) = C_p (1 + \| x\|)^{-p}, \qquad \|x\| \to +\infty.
\end{align}
It follows that 
\begin{align*}
\sum_{i=1}^n\frac{f(X_i)}{ q_0(X_i)} \mathds 1_{\{\|X_i\|> n^{s_0}/2\}} 
&\leq  C_p  \sum_{i=1}^n  (1 + \| X_i\|)^{-p} \mathds 1_{\{\|X_i\|> n^{s_0}/2\}}\\
&\leq  C_p  n  (1 +  n^{s_0}/2 )^{-p} .
\end{align*}
Since  $\lambda_n^{-1} h_n^{-d} \ll a_n^{-2} h_n^{-d}  \ll n $, 
we obtain with probability $1$,
\begin{align*}
\sum_{i=1}^nW_{n,i} K_{h_n} ( x-X_i   )  \mathds 1_{\{\|X_i\|>  n^{s_0}/2\}} = O ( \lambda_n^{-1} h_n^{-d}  n^{-ps_0} ) =o (  n^{1 -ps_0}). 
\end{align*}
Choose $p$ large enough to obtain that the previous bound is $o(n^{-1/2})= o (a_n)$.  Equation (\ref{important_bound_0}) is clearly a consequence of (\ref{toimp})
\end{proof}

\paragraph{End of the proof of Lemma~\ref{lemma:initial_consistency}}
Choose $s_0$ large enough so that the conclusion of Lemma~\ref{lemma:fnxgrand} is valid. We have
\begin{align}
\nonumber |f_{n}(x) - f(x)| & \leq  \sup_{\|x\|\leq n^{s_0}} |f_{n}(x) - f(x)|  + \sup_{\|x\|>n^{s_0}} f_n(x) + f(x)  \\
 \nonumber & \leq  \sup_{\|x\|\leq n^{s_0}} |f_{n}(x) - \tilde f_n (x)| + \sup_{x\in \mathbb R^d} |\tilde f_{n}(x) -  f(x)| \\
\label{decomp_final} & \qquad \qquad\qquad \qquad \qquad \qquad \qquad + \sup_{\|x\|>n^{s_0} } f_n(x) + f(x)  
\end{align}
Lemma~\ref{lemma:bias} implies that the middle term is $O(h_n^2)$. Lemma \ref{lemma:fnxgrand}  implies that the right-hand side term is $o(a_n)$ almost surely. Lemma \ref{lemma:initial_variance} implies that the left-hand side term is $O\big(a_n\lambda_n^{-1/2}\big) $.\qed

\subsection{Proof of Lemma~\ref{lemma:1}}\label{append:th:1}

The proof is similar to the proof of Lemma~\ref{lemma:initial_consistency}. The only difference is that, instead of using the bound \eqref{xpetit0}, we shall use an improved bound on $\sup_{\|x\|\le n^s} | f_{n}(x)-\tilde f_{n}|$ which will follow from the additional assumption (\ref{qfhyp}). This bound will be given in Lemma \ref{lemma:true_rate} but before we need to obtain the following technical lemma which will be used several times in the sequel.

\begin{lemma} \label{lemma:fqi}
Assume \ref{cond:f}, \ref{cond:f_tail}, \ref{cond:K} and work under the general policy \eqref{eq:general_policy}. If  $a_n^2 \ll \lambda_n $, then we have for any $\varepsilon \in (0,1/2)$
\begin{align}
\label{important_bound_2}
&\sup_{i\geq 0} \sup_{x\in\mathbb R^d} \lambda_i^{-\varepsilon} f(x)\Big|\frac{f(x)}{q_i(x)}-1\Big|
= O(1),\quad a.s.\\
\label{important_bound_3}
&\sup_{i\geq 0}\sup_{x\in\mathbb R^d}\frac{f^2(x)}{q_i(x)q_0(x)}<+\infty,\quad a.s.
\end{align}
\end{lemma}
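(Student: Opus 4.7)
The plan is to split $\mathbb R^d$ into a \emph{bulk} region where $f(x)$ is not too small and a \emph{tail} region where $f(x)$ is very small, and to use different lower bounds on $q_i$ in each. On the tail the crude bound $q_i\ge\lambda_i q_0$ suffices, provided it is combined with the strong polynomial decay of $f$ built into \ref{cond:f_tail}, which equivalently reads $f/q_0\le c_\eta f^{1-\eta}$ for any $\eta\in(0,1]$. On the bulk, where we need $q_i\gtrsim f$, the plan is to use $q_i\ge(1-\lambda_i)f_i^\circ$ together with the closeness $\sup_x|f_i^\circ-f|=O(\lambda_i^{1/2+\varepsilon'})$ furnished by \eqref{qfhyp}, which is the hypothesis of Lemma~\ref{lemma:1} in whose proof the present technical lemma is invoked. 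Without this bulk control, $q_i\ge\lambda_iq_0$ alone would give $f/q_i\lesssim 1/\lambda_i$, making both conclusions fail.

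For \eqref{important_bound_2}, write $f-q_i=(1-\lambda_i)(f-f_i^\circ)+\lambda_i(f-q_0)$, so that $|f-q_i|=O(\lambda_i^{1/2+\varepsilon'})+O(\lambda_i)$ uniformly in $x$. On the bulk $\{f\ge M\lambda_i^{1/2+\varepsilon'}\}$ with $M$ large enough, $f_i^\circ\ge f/2$ and hence $q_i\ge(1-\lambda_i)f/2$; therefore $f|f/q_i-1|=f|f-q_i|/q_i\lesssim|f-q_i|=O(\lambda_i^{1/2+\varepsilon'})$. On the tail $\{f<M\lambda_i^{1/2+\varepsilon'}\}$, use $q_i\ge\lambda_iq_0$ and \ref{cond:f_tail} to get $f\cdot f/q_i\le c_\eta f^{2-\eta}/\lambda_i\le c_\eta M^{2-\eta}\lambda_i^{(1/2+\varepsilon')(2-\eta)-1}$; picking $\eta$ sufficiently small (depending on $\varepsilon$ and $\varepsilon'$) makes the exponent strictly larger than $\varepsilon$, so that multiplication by $\lambda_i^{-\varepsilon}$ still gives $O(1)$.

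The proof of \eqref{important_bound_3} follows the same split. On the bulk, $f^2/(q_iq_0)=(f/q_i)(f/q_0)$ is bounded by a universal constant since $f/q_i\lesssim 1$ there and $f\le c_1 q_0$. On the tail, $f^2/(q_iq_0)\le(f/q_0)^2/\lambda_i\le c_\eta^2 f^{2-2\eta}/\lambda_i\le c_\eta^2 M^{2-2\eta}\lambda_i^{(1/2+\varepsilon')(2-2\eta)-1}$, which is bounded as soon as $\eta<2\varepsilon'/(1+2\varepsilon')$. Finitely many early indices where either \eqref{qfhyp} is not yet effective or $\lambda_i$ is not small contribute at most a deterministic constant, so the almost sure supremum over $i\ge 0$ is preserved. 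The main technical hurdle is the joint calibration of the bulk threshold $M\lambda_i^{1/2+\varepsilon'}$ and of the exponent $\eta$; everything else is a routine tracking of exponents.
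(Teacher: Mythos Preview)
Your proposal is correct and follows essentially the same route as the paper: both split according to whether $f(x)$ is large or small compared to $d_i:=\sup_x|q_i(x)-f(x)|=O(\lambda_i^{1/2+\varepsilon'})$ (your threshold $M\lambda_i^{1/2+\varepsilon'}$ is the paper's $2d_i$), and both crucially rely on \eqref{qfhyp}, which you correctly flag as an implicit hypothesis. For \eqref{important_bound_2} the arguments are nearly identical; the paper streamlines slightly by showing the single estimate $f\le C\lambda_i^{-1/2}q_i$ in \emph{both} regimes and then multiplying by $|f-q_i|=O(\lambda_i^{1/2+\varepsilon})$, whereas you keep the two regimes separate and carry an extra free parameter $\eta$.

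The one genuine methodological difference is for \eqref{important_bound_3}: the paper avoids any case split by first deriving $f\le C(q_i/q_0)^{1/2+\varepsilon}$ (from $f\le q_i+d_i$, $d_i\le C(q_i/q_0)^{1/2+\varepsilon}$ via $q_i\ge\lambda_iq_0$, and boundedness of $q_i$) and then writing $f^2=f^{(1/2+\varepsilon)^{-1}}f^{4\varepsilon/(1+2\varepsilon)}\le C(q_i/q_0)\,q_0^2$ using \ref{cond:f_tail}. Your bulk/tail argument reaches the same conclusion with comparable effort; the paper's version is a bit slicker but yours is perhaps more transparent about where each hypothesis enters. One small wording fix: the early-index contribution is bounded by a finite \emph{random} constant (since \eqref{qfhyp} carries a random big-$O$ constant), not a deterministic one, though this does not affect the almost-sure conclusion.
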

\begin{proof} 
Set
\begin{align*}
d_i = \sup_{x\in \mathbb R^d}| q_i(x)- f(x)|.
\end{align*}
Since $f$ and $q_0$ are bounded, 
(\ref{qfhyp}) implies clearly that $d_i=O(\lambda_i^{1/2+\varepsilon})$ a.s.
If $f(x)\le 2d_i$, \ref{cond:f_tail} implies, for $i$ large enough,
\begin{align*}
f(x)\le f(x)^{1-\varepsilon} c_\epsilon q_0(x)
  \le (2d_i)^{1-\varepsilon} c_\epsilon q_i(x)/\lambda_i
  \le C_1\lambda_i^{-\frac12}q_i(x),
\end{align*}
for some $C_1 >0$ (because $\varepsilon\le 1/2$). 
If $f(x)>2d_i$, we have $q_i (x) >d_i$ but $ f(x)-q_i(x)\le d_i$ implying that $f(x)\le 2q_i(x)$,  
thus $f(x) \leq 2 \lambda_i^{-\frac12}q_i(x)$. 
In any case $f(x) \leq (C_1+2) \lambda_i^{-\frac12}q_i(x)$ and this leads to the bound
\begin{align*}
f(x)|f(x)-q_i(x)| \le C_1' \lambda_i^{-\frac12}q_i(x)
\lambda_i^{\frac12+\varepsilon}
\end{align*}
which implies (\ref{important_bound_2}). For (\ref{important_bound_3}), notice that because the $q_i$ are uniformly bounded (by $U_f+\sup_{i} d_i$), one has that
$ q_0^{1/2+\epsilon} q_i^{  1/2 -   \epsilon} $ is uniformly bounded by some $C_2>0$. Then, since $d_i=O(\lambda_i^{1/2+\varepsilon})$,
\begin{align*}
f(x)\le q_i(x)+C_3\lambda_i^{\frac12+\varepsilon}
\le q_i(x)+C_3\Big(\frac{q_i(x)}{q_0(x)}\Big)^{\frac12+\varepsilon} \le ( C_2  + C_3 ) \Big(\frac{q_i(x)}{q_0(x)}\Big)^{\frac12+\varepsilon},
\end{align*}
for some $C_3>0$. Now
\begin{align*}
f(x)^2
=f(x)^{(\frac12+\varepsilon)^{-1}}f(x)^{\frac{4\varepsilon}{1+2\varepsilon}}
\le C_4 \frac{q_i(x)}{q_0(x)}q_0(x)^2,
\end{align*}
where $C_4$ is given by using  \ref{cond:f_tail}. This is (\ref{important_bound_3}).
\end{proof}

\begin{lemma}\label{lemma:true_rate}
Assume \ref{cond:f}, \ref{cond:f_tail}, \ref{cond:K} and work under the general policy \eqref{eq:general_policy}. If $a_n=O(\lambda_n)$, we have for any $s>0$, with probability $1$,
\begin{align}\label{trueZ}
&\sup_{\|x\|\leq n^s}\,|Z_n(x)| =O\left(\sqrt{\frac{n\log(n)}{h_n^d}} \right) \\
&| M_n|=O\big(\sqrt {n\log n}\big).\label{trueM} 
\end{align} 
Moreover, for any $s>0$, with probability $1$,
\begin{align}\label{xpetit}
&\sup_{\|x\|\leq n^s} | f_{n}(y) - \tilde f_{n}(y)  |=O(a_n).
\end{align} 
\end{lemma}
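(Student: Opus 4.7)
The plan is to rerun the Freedman/Bennett-type arguments used in Lemma~\ref{lemma:initial_variance}, but upgrade the quadratic-variation estimates by exploiting the additional proximity of $q_i$ to $f$ that is furnished by \eqref{qfhyp} and distilled in Lemma~\ref{lemma:fqi}. The key observation is that, whereas in Lemma~\ref{lemma:initial_variance} the only available bound on $f/q_{i-1}$ was the crude $c_1 \lambda_i^{-1}$ from \eqref{important_bound_1}, now \eqref{important_bound_3} gives the much tighter pointwise (a.s.) bound $f^2/q_{i-1}\le C q_0$ for a deterministic $C>0$, yielding \emph{integrable} control of second moments.

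For \eqref{trueM}, set $Y_i = f(X_i)/q_{i-1}(X_i) - 1$. These increments remain bounded by $m = c_1\lambda_n^{-1}+1$ via \eqref{important_bound_1}, but the conditional second moment
\begin{align*}
\mathbb E[Y_i^2\mid\mathcal F_{i-1}] \;=\;\int \frac{f^2}{q_{i-1}}-1 \;\le\; C\int q_0 \;=\; C
\end{align*}
is now $O(1)$ instead of $O(\lambda_n^{-1})$. Thus the quadratic variation is bounded by $v=O(n)$. Applying Theorem~\ref{th:freedman} with $t=\gamma\sqrt{n\log n}$, the condition $mt\lesssim v$ reduces to $\lambda_n^{-1}\sqrt{n\log n}\lesssim n$, equivalent (since $h_n$ is bounded, so $a_n\ge c\sqrt{\log n/n}$) to $a_n=O(\lambda_n)$, our hypothesis. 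Borel--Cantelli then yields \eqref{trueM}.

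For \eqref{trueZ}, we apply Corollary~\ref{th:freedmanZ} to $Z_n(x)=\sum_i\{W_i K_{h_n}(x-X_i)-\tilde f_n(x)\}$ exactly as in the proof of \eqref{initZ}, with the same choice $\varepsilon=h_n^{d+1}/n$ and $\mu=f$. The increment bound is unchanged: $m=O(\lambda_n^{-1}h_n^{-d})$. However the variance is improved: using \eqref{important_bound_3},
\begin{align*}
\mathbb E\bigl[(W_i K_{h_n}(x-X_i))^2\mid\mathcal F_{i-1}\bigr]
\;=\;\int \frac{f^2}{q_{i-1}}\,K_{h_n}(x-\cdot)^2
\;\le\; C U_{q_0} h_n^{-d}\int K^2,
\end{align*}
so $v=O(nh_n^{-d})$ in place of $O(nh_n^{-d}/\lambda_n)$. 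Taking $t=\gamma\sqrt{nh_n^{-d}\log n}$, the requirement $mt\lesssim v$ again reduces to $a_n=O(\lambda_n)$, so the exponent in Corollary~\ref{th:freedmanZ} is $-C_1\gamma^2\log n$. Choosing $\gamma$ large enough, summability of the resulting probabilities and Borel--Cantelli give \eqref{trueZ}.

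Finally, \eqref{xpetit} follows by plugging the improved bounds into the identity \eqref{df1}: since $|M_n|=O(\sqrt{n\log n})=o(n)$, the denominator satisfies $M_n+n\sim n$; the numerator is $O\bigl(\sqrt{n\log n/h_n^d}\bigr)$ (the $Z_n$ term dominates $\tilde f_n M_n$ because $\tilde f_n$ is uniformly bounded and $h_n\to 0$), hence division by $n$ produces the desired $O(a_n)$ rate, uniformly on $\{\|x\|\le n^s\}$. The only real obstacle is securing the clean pointwise majorization $f^2/q_i\lesssim q_0$ that drives the variance improvement; this is exactly the content of Lemma~\ref{lemma:fqi}, which combines the heavy-tail assumption \ref{cond:f_tail} with the sharp proximity \eqref{qfhyp} by splitting according to whether $f(x)\le 2d_i$ or not.
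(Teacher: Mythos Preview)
Your approach is exactly the paper's: rerun Freedman/Corollary~\ref{th:freedmanZ} with the variance upgraded via \eqref{important_bound_3}, then feed \eqref{trueZ} and \eqref{trueM} into \eqref{df1}. One technical point you elide: the constant in \eqref{important_bound_3} is \emph{random} (Lemma~\ref{lemma:fqi} asserts only that $\sup_{i,x} f^2/(q_i q_0)<\infty$ a.s.), so you cannot directly plug a ``deterministic $C$'' into the concentration inequalities. The paper handles this by restricting to the event $\Omega_U=\{\sup_{i,x} f^2/(q_i q_0)\le U\}$, taking $M(x)=(Uq_0(x)/f(x))\wedge(c_1\lambda_n^{-1})$ in Corollary~\ref{th:freedmanZ}, establishing the bound on $\Omega_U$ via Borel--Cantelli, and then letting $U\to\infty$ using $\mathbb P(\Omega_U)\to 1$. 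With that localization device inserted, your argument is complete and matches the paper line by line.
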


\begin{proof}

Equation (\ref{df1}), namely $ f_{n}(y)  - \tilde f_{n}(y) = (Z_n(y) - \tilde f_{n}(y) M_n) /( M_n+n)$, implies that  (\ref{xpetit}) follows from \eqref{trueZ} and \eqref{trueM}.
%

In order to prove (\ref{trueZ}), we will use Corollary~\ref{th:freedmanZ} with
\begin{align*}
\Omega_1=  \Omega_U = \left\{\omega:  \sup_{i\geq 0}\sup_{x\in\mathbb R^d}\frac{f^2(x)}{q_i(x)q_0(x)} \leq U \right\}
 \end{align*}
where $U$ is a parameter, and
\begin{align*}
& W_i = \frac{f(X_i)}{  q_{i-1} (X_i)} ,\quad   \mu(dy)=f(y)dy, \quad M(x) =  \left(U \frac{q_0(x)}{f(x)} \right)\wedge  \left(c_1\lambda_n^{-1}\right).
\end{align*}
 It remains to choose $t$ and $\varepsilon$ and to evaluate $m$, $\tilde v$ and $\tau$.
As in the proof of Lemma~\ref{lemma:initial_variance}, 
we take $\varepsilon=h_n^{d+1} n^{-1}$, and  
\begin{align*}
&m=U_K(1+U_w) h_n^{-d}\le U_K(1+c_1 \lambda_n^{-1} )  h_n^{-d} \\
&v=U_Kn h_n^{-d}\int M(u) K_{h_n}(x-u ) f(u) du \leq U_Kn h_n^{-d} U_{q_0} U \\
&\tau=  2L_K\varepsilon_n nh_n^{-d-1} =2L_K\\
&\tilde v=\max(v,2m\tau).
\end{align*} 
The conclusion of Corollary~\ref{th:freedmanZ} is that, for all $t\geq 0$,
\begin{align*}
&\mathbb P\Big(\sup_{\|x\|\le n^s}|Z_n(x)|>t+\tau,\,\Omega_U\Big)
  \le 4(1+2n^s/\varepsilon)^d \exp\left(-\frac{t^2}{8(\tilde v+2mt/3)} \right).
\end{align*}
We take $t=\gamma \sqrt{n\log(n)h_n^{-d}}$, and notice that $t\ge\tau$ for $n$ large enough. Using that $\lambda_n^{-1} = O(a_n^{-1})$, we find that $mt \leq C \gamma \lambda_n^{-1} h_n^{-d}\sqrt{n\log(n)h_n^{-d}} \le C\gamma nh_n^{-d}$ and also $\tilde v\le Cnh_n^{-d}$, when $n$ is large enough, for some $C>0$.
Taking $\gamma$ larger than $3/2$, we find
\begin{align*}
\mathbb P\Big(\sup_{\|x\|\le n^s}|Z_n(x)|>2t,
\,\Omega_{U}\Big) &  \le 4\Big(1+\frac{2n^{s+1}}{h_n^{d+1}}\Big)^d \exp\left(-\frac{3t^2}{32 C \gamma nh_n^{-d}}\right)\\
&\le 4\Big(1+\frac{2n^{s+1}}{h_n^{d+1}}\Big)^d \exp\left(- 3 \gamma\log (n) / (32C) \right).
\end{align*}
For $\gamma$ large enough, the Borel-Cantelli lemma applies and one has for any $U>0$ that 
\begin{align*}
\mathbb P \left( \limsup_{n\to \infty} \sup_{\|x\|\le n^s}\frac{|Z_n(x)|}{\sqrt{n\log(n)h_n^{-d}}} >\gamma , \,\Omega_U \right)  =0
\end{align*}
which implies the result since,  by Lemma~\ref{lemma:fqi}, $\mathbb P(\Omega_U)\to 1$ as $U\to +\infty$.

The proof of (\ref{trueM}) will use Theorem \ref{th:freedman}.
The bounds on the quadratic variation and the increments of $M_n$ are derived in a very 
similar way as before. By (\ref{important_bound_1}) and $\lambda_n^{-1} = O(a_n^{-1})$, we have
\begin{align*}
& \max_{i=1,\ldots, n} \sup_{y\in \mathbb R^d} \left| \frac{f(y)  }{ q_{i-1}(y)} - 1 \right| \leq    c_1 \lambda_n^{-1}  +1 \leq  C\Big(\frac n{\log n}\Big)^{1/2} =m\\
&\sum_{i=1}^n \int \frac{ f(y)^2 }{q_{i-1}(y)} \,\diff y\leq  nU  = v\quad\text{on $\Omega_U$}.
\end{align*}
Theorem \ref{th:freedman} implies that for any $t>0$
\begin{align*}
\mathbb P \left(|M_n| >t, \Omega_U\right) 
& = \exp\Bigg( -\frac{ t^2}{ 2(nU+(C/3) t \sqrt{n/\log n} )} \Bigg).
\end{align*}
Choosing $t= \gamma \sqrt {n\log n}$ with $\gamma$ large leads to the summability
of these probabilities and (\ref{trueM}) holds on $\Omega_{U}$. 
Since $\mathbb P(\Omega_{U})\to 1$ as $U\to\infty$ (Lemma~\ref{lemma:fqi}), we get (\ref{trueM}).
\end{proof}

\subsection{Proof of Lemma~\ref{lemma:2}}\label{append:lemma:2}

We rely on the following central limit theorem for martingale arrays.

\begin{theorem}\cite[Corollary 3.1]{hall+h:1980}\label{th:hall}
Let $(w_{n,i})_{1\leq i\leq n,\, n\geq 1} $ be a triangular array of random variables such that
\begin{align}\label{hh1}
&\mathbb E [w_{n,i}\mid \mathcal F _{i-1}] = 0,\quad \text{for all }1\leq i \leq n,\\
&\sum_{i=1}^ n \mathbb E[ w_{n,i}^2\mid \mathcal F _{i-1} ] \to v^*\geq 0,\quad \text{in probability,}\label{hh2}\\
&\sum_{i=1}^ n \mathbb E[ w_{n,i}^2 \mathrm I _ {\{ |w_{n,i}| > \varepsilon \} } \mid \mathcal F _{i-1} ] \to 0 ,\quad \text{in probability,}\label{hh3}
\end{align}
then, $\sum_{i=1}^ n w_{n,i} \leadsto \mathcal N (0,v^*) $,  as $ n\to \infty$.
\end{theorem}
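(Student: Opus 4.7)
The plan is to prove that the characteristic function of $S_n=\sum_{i=1}^n w_{n,i}$ converges pointwise to that of $\mathcal N(0,v^*)$, namely to show $\mathbb E[e^{itS_n}]\to e^{-v^*t^2/2}$ for every $t\in\mathbb R$, and then invoke L\'evy's continuity theorem. The execution follows the classical Lindeberg--Feller template, but with independence replaced by the martingale difference property \eqref{hh1}: conditional characteristic functions take the role of characteristic functions, and Taylor expansions are performed level by level along the filtration $(\mathcal F_i)$.

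First, I would perform a truncation. Fix $\varepsilon>0$ and set
\begin{align*}
\bar w_{n,i} = w_{n,i}\mathds{1}_{\{|w_{n,i}|\le\varepsilon\}} - \mathbb E\bigl[w_{n,i}\mathds{1}_{\{|w_{n,i}|\le\varepsilon\}}\bigm|\mathcal F_{i-1}\bigr].
\end{align*}
This is again a martingale difference array, now bounded by $2\varepsilon$. By \eqref{hh1} together with the Lindeberg-type condition \eqref{hh3}, the residual $S_n-\bar S_n$ is $o_{\mathbb P}(1)$, and \eqref{hh2} transfers to $\bar S_n$ with the same limit $v^*$ (the recentering contributes negligibly to the conditional variance, again by \eqref{hh3}). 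This reduces the problem to a bounded martingale difference array.

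Next, I would expand the conditional characteristic functions. Since $\mathbb E[\bar w_{n,i}\mid\mathcal F_{i-1}]=0$ and $|\bar w_{n,i}|\le 2\varepsilon$, Taylor's formula gives
\begin{align*}
Z_{n,i} := \mathbb E\bigl[e^{it\bar w_{n,i}}\bigm|\mathcal F_{i-1}\bigr] = 1 - \tfrac{t^2}{2}\sigma_{n,i}^2 + R_{n,i},\qquad |R_{n,i}|\le \tfrac{|t|^3\varepsilon}{6}\sigma_{n,i}^2,
\end{align*}
where $\sigma_{n,i}^2=\mathbb E[\bar w_{n,i}^2\mid\mathcal F_{i-1}]$. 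Since $\sum_i\sigma_{n,i}^2\to v^*$ in probability by \eqref{hh2}, and $|\log z-(z-1)|\le C|z-1|^2$ when $z$ is close to $1$, one gets $\sum_i\log Z_{n,i}\to -v^*t^2/2$ up to an $O(\varepsilon)$ term, whence $\prod_i Z_{n,i}\to e^{-v^*t^2/2}$ in probability. The $\varepsilon$-error is then removed by sending $\varepsilon\to 0$ after $n\to\infty$.

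The main obstacle is to relate this product of conditional expectations to the unconditional quantity $\mathbb E[e^{it\bar S_n}]$. I would use the telescoping identity
\begin{align*}
e^{it\bar S_n}-\prod_{i=1}^n Z_{n,i} = \sum_{k=1}^n\Bigl(\prod_{j<k}e^{it\bar w_{n,j}}\Bigr)\bigl(e^{it\bar w_{n,k}}-Z_{n,k}\bigr)\Bigl(\prod_{j>k}Z_{n,j}\Bigr),
\end{align*}
whose $k$-th summand is conditionally centered on $\mathcal F_{k-1}$ through the middle factor. The resulting orthogonality gives an $L^2$ bound of order $\sum_k\mathbb E[|e^{it\bar w_{n,k}}-Z_{n,k}|^2]\le C t^2\sum_k\mathbb E[\bar w_{n,k}^2]$, which is uniformly $O(\varepsilon)$ provided the tail product $\prod_{j>k}|Z_{n,j}|$ stays bounded. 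Establishing this product bound is the delicate technical point: from $|Z_{n,j}|\le 1+Ct^2\sigma_{n,j}^2$ and the tightness of $\sum_j\sigma_{n,j}^2$ around $v^*$, a Gronwall-type estimate yields a uniform bound on $\prod_j|Z_{n,j}|$. Combining these pieces and letting first $n\to\infty$ and then $\varepsilon\to 0$ delivers the convergence of characteristic functions, hence the theorem.
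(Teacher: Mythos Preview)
The paper does not prove this theorem; it simply quotes it from Hall and Heyde (1980, Corollary~3.1) and uses it as a black box in the proof of Lemma~\ref{lemma:2}. So there is no in-paper argument to compare against. That said, your outline has a genuine gap at the linking step between $e^{it\bar S_n}$ and $\prod_k Z_{n,k}$.

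In your telescoping identity the $k$-th summand is
\[
D_k=\Bigl(\prod_{j<k}e^{it\bar w_{n,j}}\Bigr)\bigl(e^{it\bar w_{n,k}}-Z_{n,k}\bigr)\Bigl(\prod_{j>k}Z_{n,j}\Bigr).
\]
You assert that $D_k$ is conditionally centered on $\mathcal F_{k-1}$ ``through the middle factor''. But the tail product $\prod_{j>k}Z_{n,j}$ is \emph{not} $\mathcal F_{k-1}$-measurable: each $Z_{n,j}=\mathbb E[e^{it\bar w_{n,j}}\mid\mathcal F_{j-1}]$ with $j>k$ depends on $\bar w_{n,k},\ldots,\bar w_{n,j-1}$. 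Hence $\mathbb E[D_k\mid\mathcal F_{k-1}]\neq 0$ in general and the orthogonality you need for the $L^2$ bound fails. Moreover, even granting orthogonality, the quantity $\sum_k\mathbb E\bigl|e^{it\bar w_{n,k}}-Z_{n,k}\bigr|^2=\sum_k\mathbb E\bigl[1-|Z_{n,k}|^2\bigr]$ is of order $t^2 v^*$, not $O(\varepsilon)$, so the argument would not close anyway.

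The classical proofs (McLeish; Hall--Heyde) avoid this obstruction by comparing $e^{it\bar S_n}$ not with the random product $\prod_k Z_{n,k}$ but with $T_n W_n$, where $T_n=\prod_k(1+it\bar w_{n,k})$ and $W_n=\exp\!\bigl(-\tfrac{t^2}{2}\sum_k\bar w_{n,k}^2\bigr)$. The point is that $T_n$ is an \emph{exact} mean-one martingale (by iterated conditioning, no measurability issue), while $W_n$ is asymptotically a \emph{constant} under \eqref{hh2}--\eqref{hh3}; uniform integrability of $T_n$ (from boundedness after truncation plus the quadratic-variation control) then yields $\mathbb E[e^{it\bar S_n}]\to e^{-t^2v^*/2}$. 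Your truncation and Taylor steps are fine; it is the telescoping/orthogonality device that must be replaced by this martingale product construction.
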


\paragraph{Proof of equation \eqref{gnorm}} Set
\begin{align*}
 I_n(g) = n^{-1} \sum_{i=1} ^n W_ig(X_i)
\end{align*}
Because $\log(n)/n\ll \lambda_n $, applying Lemma \ref{lemma:initial_variance} gives that $I_n(1)$ converges in probability to $1$. 
Using the decomposition
\begin{align*}
\sum_{i=1}^n W_{n,i} g(X_i ) -  I(g)  
  = \frac1{I_n(1)}\left\{\Big(I_n(g)-I(g)  \Big)-\big(I_n(1)-1\big)I(g)  \right\},
\end{align*}
with $I(g)  = \int gf$, the problem reduces to the estimation of the limit
of $n^{1/2}((I_n(g)- I(g)  ),(I_n(1)-1)) $. 
Verifying the conditions of Theorem \ref{th:hall} with $v_*=V(f,g)$ and
\begin{align*}
w_{n,i} =\frac1{\sqrt n} \Big(\frac{g(X_i) f(X_i) }{q_{i-1}(X_i)} - I(g)  \Big),
\end{align*}
will prove the convergence of $n^{1/2}(I_n(g)-I(g)  )$ to the limit $\mathcal N(0, V(f,g))$.
This will imply that $n^{1/2}(I_n(1)-1)$ converges to zero in probability since $V(f,1)=0$,
and the result will follow by virtue of Slutsky's Lemma. 

Equation (\ref{hh1}) is satisfied. We now show (\ref{hh2}) with $v_*=V(f,g)=\int g^2f-I(g) ^2$, or equivalently that
\begin{align}\label{tig2f}
n^{-1}\sum_{i=1}^n \int \frac{g ^2f ^2 }{ q_{i-1}}  \to \int g^2f  , \qquad \text{in probability.}
\end{align}
But using (\ref{important_bound_2}),  we have that, with probability $1$,  
for any $x\in \mathbb R^d$, 
$$| g (x)^2f(x) ^2 / q_{i-1}(x)-g(x)^2f(x)|\to 0.$$ 
Moreover, using \eqref{important_bound_3}, we have that with probability $1$,  
$g ^2f ^2 / q_{i-1}\leq C g^2 q_0$ for some $C>0$, the integral of which is finite. 
By the Lebesgue dominated convergence theorem we find that, almost surely,
\begin{align*}
\int \Big|\frac{g ^2f ^2 }{ q_{i-1}}  - g^2f\Big| \to 0,
\end{align*}
and so does the average from the Cesaro lemma. This implies (\ref{tig2f}).

Finally, we verify the Lindeberg condition (\ref{hh3}). We have to prove that
\begin{align*}
n^{-1}\sum_{i=1}^n \int \left(\frac{g(y)f(y)}{ q_{i-1}^2(y)}- I(g) \right)^2q_{i-1} (y) \mathrm I_{A_{ni}}(y) \,\diff y \to 0~~~~ \text{in probability},
\end{align*}
with $A_{ni}=\big\{|gf / q_{i-1}- I (g) | >\varepsilon \sqrt n \big\}$.
For $n$ large enough
 $$A_{ni}\subset A_{ni}' =  \big\{gf / q_{i-1} >\varepsilon \sqrt n/2 \big\}\subset A_n := \big\{gf / q_0  >  \varepsilon  \lambda_ n \sqrt n/2 \big\}  $$
for all $i= 1,\ldots, n $. 
Consequently, using \eqref{important_bound_3} and \eqref{qfhyp}, we get, a.s.,
\begin{align*}
&n^{-1}\sum_{i=1}^n \int \left(\frac{g(y)f(y)}{ q_{i-1}^2(y)}- I(g) \right)^2q_{i-1} (y) \mathrm I_{A_{ni}}(y) \,\diff y \to 0 \\
&\leq 2 n^{-1}\sum_{i=1}^n \left(\int \frac{g(y)^2f(y)^2 }{ q_{i-1} (y)} \mathrm I_{A_{ni}}(y) \,\diff y  +  I(g)^2  \int q_{i-1} (y) \mathrm I_{A_{ni}}(y) \,\diff y\right)\\
&\leq 2 n^{-1}\sum_{i=1}^n \left(\int \frac{g(y)^2f(y)^2 }{ q_{i-1} (y)} \mathrm I_{A_{n}}(y) \,\diff y  +  I(g)^2  \int q_{i-1} (y) \mathrm I_{A_{ni}'}(y) \,\diff y\right)\\
& \leq 2 n^{-1}\sum_{i=1}^n \left( C \int  g(y)^2 q_0(y) \mathrm I_{A_{n}}(y) \,\diff y  +  2 I(g)^2  ( \varepsilon \sqrt n)^{-1}  \int  g(y)f(y)  \diff y\right)\\
&=  2  \left( C \int  g(y)^2 q_0(y) \mathrm I_{A_n}(y) \,\diff y  +  2 I(g)^2  ( \varepsilon \sqrt n)^{-1}  \int  g(y)f(y)  \diff y\right).
\end{align*}
Because $\int  g ^2 q_0 < \infty$, applying the Lebesgue dominated convergence theorem, we find that the limit is $0$. \qed

\section{Proof of Theorem \ref{th:cvd3}}\label{app:proof_cvd3}

The study of SAIS with subsampling starts with the following result which provides a uniform bound on the error associated to the bootstrap density estimate. Because the error rate below is $b_n+ a_n\lambda_n^{-1/2}+h_n^2$, the assumption allows to check that $\eqref{qfhyp}$ with $f^* = f^\circ$ and then to apply Lemma \ref{lemma:2} to obtain equation \eqref{gnorm} which is the stated result.

\begin{theorem}\label{bootconv}
Assume \ref{cond:f}, \ref{cond:f_tail}, \ref{cond:K} and work under the subsampling policy \eqref{sampler_update_boot}. If $a_n^2 \ll \lambda_n$ and $\ell_n$ is such that
\begin{align}\label{hypmn}
\ell_n\gg \frac{\log(n)}{h_n^d}, 
\end{align}
then,
\begin{align*}
&\sup_{x\in\mathbb R^d}|f_{n}^*(x)-f(x)|
  =O\big(b_n+ a_n\lambda_n^{-1/2}+h_n^2\big), \qquad \text{a.s.}
\end{align*}
with $b_n=\big(\log n/(\ell_nh_n^d)\big)^{1/2}$.
\end{theorem}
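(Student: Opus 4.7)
The natural decomposition is
\[
f_n^*(x) - f(x) \;=\; \bigl[f_n^*(x) - f_n(x)\bigr] \;+\; \bigl[f_n(x) - f(x)\bigr].
\]
Since the hypothesis $a_n^2 \ll \lambda_n$ is exactly the one of Lemma~\ref{lemma:initial_consistency}, and the subsampling policy \eqref{sampler_update_boot} fits into the general scheme \eqref{eq:general_policy} with $f_k^\circ = f_k^*$, that lemma immediately yields $\sup_{x\in\mathbb R^d}|f_n(x)-f(x)| = O(a_n\lambda_n^{-1/2} + h_n^2)$ almost surely. The entire task is thus reduced to proving that
\[
\sup_{x\in\mathbb R^d}\bigl|f_n^*(x) - f_n(x)\bigr| = O(b_n) \qquad \text{a.s.},
\]
i.e.\ controlling the fluctuation purely due to the bootstrap step.

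For this, the key observation is that, conditional on $\mathcal F_n$, the draws $(X_{n,k}^*)_{k=1,\ldots,\ell_n}$ are i.i.d.\ with law $\mathbb P_n$, so that $\mathbb E[K_{h_n}(x-X_{n,k}^*)\mid\mathcal F_n] = f_n(x)$; moreover each summand is bounded by $U_K h_n^{-d}$ and the conditional variance of $f_n^*(x)$ is at most $\ell_n^{-1} U_K h_n^{-d}\|f_n\|_\infty$. Thanks to Lemma~\ref{lemma:initial_consistency}, $\|f_n\|_\infty$ is a.s.\ eventually bounded. Applying a conditional Bernstein inequality with $t = \gamma b_n$ then gives, for any fixed $x$ and on this high-probability event,
\[
\mathbb P\bigl(|f_n^*(x) - f_n(x)| > \gamma b_n \,\big|\,\mathcal F_n\bigr) \;\leq\; 2\exp(-c\gamma^2\log n),
\]
where the hypothesis $\ell_n h_n^d \gg \log n$, i.e.\ $b_n\to 0$, is used precisely to ensure that the Bernstein variance term dominates the bounded-difference term.

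To upgrade this pointwise bound to a uniform bound on $\{\|x\|\leq n^s\}$, I would adapt the chaining argument of Corollary~\ref{th:freedmanZ}: the Lipschitz property of $K$ in~\ref{cond:K} implies that both $f_n$ and $f_n^*$ are Lipschitz with constant at most $L_K h_n^{-d-1}$, so a grid of spacing $\varepsilon_n$ chosen so that $L_K\varepsilon_n h_n^{-d-1} = O(b_n)$ has only polynomially many points in $n$ and reduces the problem to a union bound over this grid; choosing $\gamma$ large then makes the resulting probabilities summable and Borel--Cantelli delivers the uniform bound. The tail region $\{\|x\|>n^{s_0}\}$ is handled in direct analogy with Lemma~\ref{lemma:fnxgrand}, splitting $K_{h_n}(x-X_{n,k}^*)$ according to whether $\|X_{n,k}^*\|\leq n^{s_0}/2$: the first piece is negligible by the polynomial decay of $K$ in \ref{cond:K}, and the second by the tail control on $\sum_i W_{n,i}\mathds 1\{\|X_i\|>n^{s_0}/2\}$ that is already established in that lemma and which transfers to the bootstrap sample through $\mathbb P_n$. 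The main obstacle is precisely the balance between the number of grid points and the conditional Bernstein probability, which is what forces the condition \eqref{hypmn}; once this is in place, combining with Lemma~\ref{lemma:initial_consistency} delivers the stated rate, and equation \eqref{gnorm} then follows from Lemma~\ref{lemma:2} applied with $f_n^*$ playing the role of $f_n^\circ$.
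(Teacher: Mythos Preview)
Your proposal is correct and follows essentially the same route as the paper: the same decomposition $f_n^*-f = (f_n^*-f_n)+(f_n-f)$, the same appeal to Lemma~\ref{lemma:initial_consistency} for the second piece, and for the first piece the same conditional Bernstein-plus-chaining argument (which the paper packages as a direct application of Corollary~\ref{th:freedmanZ} with $w_i=1$, $\mu=\mathbb P_n$, $M\equiv 1$) together with the same split of the tail region $\{\|x\|>n^{s}\}$ according to whether $\|X^*_{n,i}\|\le n^{s}/2$. The only minor difference is in the tail step: the paper controls the bootstrap indicator $\sum_i 1_{\|X^*_{n,i}\|>n^{s}/2}$ by bounding its \emph{unconditional} expectation and invoking summability, rather than transferring the a.s.\ bound on $\sum_i W_{n,i}1_{\|X_i\|>n^{s}/2}$ through $\mathbb P_n$ as you suggest; your route works too but requires one extra concentration step that you should make explicit.
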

\begin{proof}
Recall that
\begin{align*}
f^*_{n} (x) = \ell_n^{-1} \sum_{i=1} ^ {\ell_n} K_{h_n} (x - X_{n,i}^*),\qquad x\in \mathbb R^d,
\end{align*}
where $(X_{n,i}^*)_{i=1,\ldots, n} $ are independent and identically distributed random 
variables with distribution $\mathbb P_n = \sum_{i=1} ^n W_{n,i} \delta_{X_i}$, 
conditionally on $X_1,\ldots, X_n$.
The decomposition is as follows
\begin{align*}
f^*_{n} (x)  - f(x) = \{f^*_{n} (x)  - f_n(x)\}  + \{f_n(x)  -  f(x)\}. 
\end{align*}
The second term in the right-hand side is treated by applying Lemma \ref{lemma:initial_consistency} with $f^\circ_n=f^*_n$. It gives
\begin{align*}
\sup_{x\in \mathbb R^d} |  f_n(x)  -  f(x)| = O(a_n \lambda_n^{-1/2} + h_n ^2 ) . 
\end{align*}
For the first  term in the right-hand side, write
\begin{align*}
\sup_{x\in \mathbb R^d} |  f_n^*(x)  -  f_n(x)|  \leq \sup_{\|x\|\leq n^s} |  f_n^*(x)  -  f_n(x)|  + \sup_{\|x\|>n^s} | f^*_n(x)- f_ n (x)| 
\end{align*}
where $s>0$. Then showing that 
\begin{align}
&\label{f*petitx1} \sup_{\|x\|\leq n^s} |  f_n^*(x)  -  f_n(x)|  =  O( b_n ),~~~a.s.\\
&\label{f*grandx} \sup_{\|x\|>n^s}f^*_n(x)=o( b_n ) ,~~~a.s.
\end{align}
will be enough to conclude the proof.

We now show \eqref{f*petitx1}. Define 
\begin{align*}
Z_n^*(x) = \sum_{i=1}^{\ell_n} \{ K_{h_n} (x - X_{n,i}^*) - f_n(x)\} .
\end{align*}
Since  $\mathbb P_n = \sum_{i=1} ^n W_{n,i} \delta_{X_i}$ is the the conditional 
expectation given the initial 
sample $X_1,\ldots, X_{n}$, we have for all $i=1,\ldots, n$,
\begin{align*}
&\mathbb E_n[K_{h_n} (x - X_{n,i}^*)]  = f_n(x) .
\end{align*}
Let us apply Corollary~\ref{th:freedmanZ}, with $\Omega_1 = \Omega$, 
\begin{align*}
&w_i =1,\qquad\mu(dx)=\mathbb P_n=\sum_{i=1}^n W_{n,i}\delta_{X_i},\qquad M(x) = 1,
\end{align*} 
and get, with probability $1$,
\begin{align*}
&\mathbb P \Big(\sup_{\|x\|\le n^s}|Z_n^*(x)|>t+\tau\mid \mathcal F_ n \Big)
  \le 4(1+2n^s/\varepsilon)^d \exp\left(-\frac{t^2}{8(\tilde v+2mt/3)} \right).
\end{align*}
It remains to choose $t$ and $\varepsilon$ and evaluate $m$, $\tilde v$ and $\tau$.
We take $\varepsilon=h_n^{d+1}\ell_n^{-1}$ and obtain
\begin{align*}
&m=2U_Kh_n^{-d}\\
&v=\ell_n h_n^{-d}U_K\int K_{h_n}(x-u )\mu(du) =  \ell_n h_n^{-d}U_Kf_n(x)\\
&\tau=2L_K\ell_n h_n^{-d-1}\varepsilon_n=2L_K\\
&\tilde v=\max(v,2m\tau)\le  C(1+f_n(x)) \ell_n h_n^{-d},
\end{align*}  
for some constant $C>0$. We take
\begin{align*}
&t=t_n= (m/3) \gamma \log n +\sqrt{\tilde v \gamma \log n}.
\end{align*}
With this choice, we get
\begin{align*}
&\frac{t_n^2}{8(\tilde v+mt_n/3)}\ge \frac{\max( t_n (m/3) \gamma\log n, \tilde v \gamma \log n)}{8(\tilde v+mt_n/3)}\ge \frac1{16}\gamma\log n
\end{align*}
Hence, with probability $1$,
\begin{align*}
&\mathbb P\Big(\sup_{\|x\|\le n^s }|Z_n^*(x)|>t_n+2L_K \mid \mathcal F_ n \Big)
  \le 4(1+2n^s/\varepsilon)^d n^{-\gamma/16}
\end{align*}
With $\gamma\ge 1$ large enough, taking expectation on both sides and applying 
the Borel-Cantelli lemma we get that a.s.
\begin{align*}
\limsup_{n} \sup_{\|x\|\le n^s}\frac{|Z_n^*(x)|}{t_n+2L_K}\le 1.
\end{align*}
Now, 
since $t_n\to\infty$, it suffices to check  
that $t_n/\ell_nb_n$ is bounded almost surely; but for a certain $C >0$,
\begin{align*}
\frac{t_n^2}{\ell_n^2b_n^2}
\le C\gamma^2\frac{h_n^{-2d}(\log n)^2+ (1+ f_n(x)) \ell _n h_n^{-d}\log n}{\ell_n h_n^{-d}\log n}
\le C\gamma^2( b_n^2+1+f_n(x))
\end{align*}
and we conclude because $b_n\to 0$ by assumption, and, with probability $1$, $f_n(x)$ is bounded by Lemma~\ref{lemma:initial_consistency}.

We now show \eqref{f*grandx}. Let $x\in \mathbb R^d$ be such that $\|x\|>n^s$. We have
\begin{align*}
f^*_n(x)
&=\frac1{\ell_n}\sum_{i=1}^{\ell_n} K_{h_n}(x-X^*_{n,i})1_{\|X^*_{n,i}\|\le n^{s}/2}
+ \frac1{\ell_n}\sum_{i=1}^{\ell_n} K_{h_n}(x-X^*_{n,i})1_{\|X^*_{n,i}\|> n^{s}/2}\\
&\le   \sup_{\|y\|\leq  n^{s}/2 } K_{h_n}( (x-y)/h_n)
+U_K \frac{h_n^{-d}}{\ell_n}\sum_{i=1}^{\ell_n} 1_{\|X^*_{n,i}\|> n^{s}/2}.
\end{align*}
If $s$ is large enough, the first term is $o(a_n)= o(b_n)$ as it was shown in the proof of Lemma \ref{lemma:fnxgrand}.
For the second term, it suffices to prove that
\begin{align*}
\xi_n=  \Big(\sum_{i=1}^nW_i\Big)  \frac{h_n^{-d}}{\ell_n}\sum_{i=1}^{\ell_n} 1_{\|X^*_{n,i}\|> n^{s}/2}\to 0~~~~a.s.
\end{align*}
because $\Big(\sum_{i=1}^nW_i\Big)/n\to 1$ a.s. by Lemma (\ref{lemma:initial_variance}) and $a_n^2 \ll \lambda_n $. 
Note that
\begin{align*}
\mathbb E_n \left[  \xi_n \right]=  h_n^{-d} \sum_{i=1}^{n} W_i 1_{\|X_i\|> n^{s}/2}.
\end{align*}
Thus
\begin{align*}
\mathbb E[\xi_n]=  n h_n^{-d}\int f(x)1_{\|x\|> n^{s}/2}dx.
\end{align*}
Since $f$ decreases faster than any polynomial, the integral can be bounded by $C_Kn^{-K}$
for any $K>0$, implying that $\mathbb E[\sum \xi_n]<\infty$, thus $\xi_n\to 0$ a.s.
\end{proof}

%

\section{Bernstein inequalities for martingale processes}\label{append_freedman}
This section is devoted to the derivation of Corollary~\ref{th:freedmanZ} below which plays a crucial role in the study of the kernel density estimate. 
Everything is based a Bennett inequality for martingales given by Freedman in 1975, that we will first modify in order to 
allow the martingale increments to be unbounded.

\begin{theorem}\label{th:freedman}
Let $(\Omega, \mathcal F , (\mathcal F _{i})_{i\geq 1}, \mathbb P)$ be a filtered space. Let $(Y_i)_{1\leq i\leq n} $ be real valued random variables such that
\begin{align*}
&\mathbb E [Y_i| \mathcal F _{i-1}] = 0,\quad\text{for all} ~i=1,\ldots, n,
\end{align*}
then, for all $t\geq 0$ and ${v},{m}>0$, 
\begin{align*}
\mathbb P\Bigg(\Big|\sum_{i=1}^nY_i\Big|\geq t, \,\max_{i=1,\ldots, n}|Y_i |\leq {m},\, 
\sum_{i=1}^n &\mathbb E[Y_i^2|\mathcal F _{i-1}]\leq {v}\Bigg) \\
&\leq 2\exp\left(-\frac{t^2}{2({v}+t{m}/3)} \right).
\end{align*}
\end{theorem}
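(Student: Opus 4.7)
The approach is the classical Cramér--Chernoff exponential martingale method (Bennett/Bernstein inequality adapted to martingales). Writing $S_n = \sum_{i=1}^n Y_i$ and $V_n = \sum_{i=1}^n \mathbb E[Y_i^2|\mathcal F_{i-1}]$, I would first establish the one-sided bound on $\mathbb P(S_n \geq t,\ \max_i|Y_i|\leq m,\ V_n\leq v)$; the lower tail follows by applying the same argument to $(-Y_i)_i$, and taking the union gives the factor $2$.

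Since the boundedness and variance controls in the statement are events rather than almost-sure hypotheses, I would introduce the stopping time
\[
\tau = \inf\{k\geq 1 : \max_{i\leq k}|Y_i| > m\ \text{or}\ V_{k+1} > v\},
\]
so that on the event of interest $\tau > n$ and hence $(S_{n\wedge\tau}, V_{n\wedge\tau}) = (S_n, V_n)$. On $\{k\leq \tau\}$ the increment $Y_k$ is bounded by $m$, so for any $\lambda > 0$ the elementary inequality
\[
e^{\lambda y} \leq 1 + \lambda y + \phi(\lambda m)\lambda^2 y^2, \qquad |y|\leq m,
\]
with $\phi(u) = (e^u - 1 - u)/u^2$ nondecreasing, combined with $\mathbb E[Y_k|\mathcal F_{k-1}]=0$, yields
\[
\mathbb E\bigl[e^{\lambda Y_k}\mathds 1_{\{k\leq \tau\}}\bigm|\mathcal F_{k-1}\bigr]
\leq \exp\bigl(\phi(\lambda m)\lambda^2 \mathbb E[Y_k^2|\mathcal F_{k-1}]\bigr).
\]

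From this I would verify that $M_k = \exp\bigl(\lambda S_{k\wedge\tau} - \phi(\lambda m)\lambda^2 V_{k\wedge\tau}\bigr)$ is a nonnegative supermartingale with $M_0 = 1$ (on $\{k > \tau\}$ the process is frozen). Markov's inequality then gives
\[
\mathbb P\bigl(S_{n\wedge\tau}\geq t,\ V_{n\wedge\tau}\leq v\bigr)
\leq \exp\bigl(-\lambda t + \phi(\lambda m)\lambda^2 v\bigr)
\]
for every $\lambda > 0$. Using the sharp scalar bound $\phi(u)\leq 1/\bigl(2(1-u/3)\bigr)$ for $u\in[0,3)$, and optimizing at $\lambda = t/(v + tm/3)$, produces the exponent $-t^2/\bigl(2(v + tm/3)\bigr)$.

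The main obstacle is the last step: obtaining the precise Bernstein constant $1/3$ in the denominator requires the non-obvious inequality $\phi(u) \leq 1/(2(1-u/3))$ (equivalently $e^u - 1 - u \leq u^2/(2-2u/3)$) rather than the naive $\phi(u)\leq e^u$ that would only yield a Hoeffding-type exponent. The rest of the argument — the stopping-time device to pass from almost-sure boundedness to event-wise boundedness, the supermartingale verification, and the Chernoff optimization — is standard.
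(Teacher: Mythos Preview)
Your exponential–supermartingale approach is the right idea, and your Chernoff optimisation is correct, but there is a genuine gap in the stopping-time device. The event $\{k\le\tau\}$ lies in $\mathcal F_{k-1}$, so on $\{k\le\tau\}$ you only know $|Y_j|\le m$ for $j\le k-1$ and $V_k\le v$; you do \emph{not} know that $|Y_k|\le m$. Consequently the conditional Laplace bound
\[
\mathbb E\big[e^{\lambda Y_k}\,\mathds 1_{\{k\le\tau\}}\mid\mathcal F_{k-1}\big]
\le \exp\big(\phi(\lambda m)\lambda^2\,\mathbb E[Y_k^2\mid\mathcal F_{k-1}]\big)
\]
is unjustified: the pointwise inequality $e^{\lambda y}\le 1+\lambda y+\phi(\lambda m)\lambda^2 y^2$ cannot be applied inside the conditional expectation, because conditionally on $\mathcal F_{k-1}$ the variable $Y_k$ is not bounded by $m$. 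Stopping works for the (predictable) variance constraint $V_n\le v$, but the increment bound $|Y_k|\le m$ is not predictable, and this is exactly the obstruction.

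The paper fixes this not by stopping but by a one-sided concave truncation: set $X_i=\min(Y_i/m,1)$. Then $X_i\le 1$ everywhere, Jensen's inequality gives $\mathbb E[X_i\mid\mathcal F_{i-1}]\le\min(0,1)=0$, and $X_i^2\le Y_i^2/m^2$. One then invokes Freedman's 1975 Bennett inequality for supermartingales (stated there with $X_i\le 1$ a.s.\ and the quadratic-variation bound already as an event), getting the exponent $-b\,h(a/b)$ with $h(u)=(1+u)\log(1+u)-u$, and finally converts to Bernstein form via $h(u)\ge u^2/(2(1+u/3))$. On the event $\{\max_i|Y_i|\le m\}$ the truncation is vacuous, so $\sum X_i=\sum Y_i/m$ and the desired probability is dominated. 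Your argument becomes correct if you replace the stopping time by this truncation (noting that the inequality $e^{\lambda y}\le 1+\lambda y+\phi(\lambda m)\lambda^2 y^2$ in fact holds for all $y\le m$, since $\phi$ is nondecreasing on $\mathbb R$, and that $\mathbb E[\min(Y_k,m)\mid\mathcal F_{k-1}]\le 0$ suffices for the supermartingale step).
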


\begin{proof}

Let us recall the Bennett inequality for supermartingales as given Freedman in 1975:

\medskip
\hfill\begin{minipage}{\dimexpr\textwidth-1.5cm}
\begin{theorem}\label{th:freedman0}
(\cite[Theorem 4.1]{freedman:1975})
If $(X_i)_{1\leq i\leq n} $ be a sequence of real-valued random variables such that 
\begin{align*}
& X_i \leq 1~~{\rm a.s.}~~~~\text{and}~~~~ \mathbb E [X_i| \mathcal F _{i-1}] \le 0~~{\rm a.s.}
~~~~\text{for all} ~i=1,\ldots, n,
\end{align*}
then, for all $a\geq 0$ and $b>0$, 
\begin{align*}
&\mathbb P \Big(\sum_{i=1}^nX_i\geq a,\, \sum_{i=1}^ n \mathbb E[ X_i^2|\mathcal F _{i-1} ] 
\leq b \Big) \leq \exp\Big(-bh(a/b)\Big)\\
&h(u)=(1+u)\log(1+u)-u.
\end{align*}
\end{theorem}
\xdef\tpd{\the\prevdepth}
\end{minipage}
\medskip

Let us recall also the classical inequality allowing to switch from the Bennett inequality to
the Bernstein inequality (\cite{massart} p.38 or \cite{pollard} p.193):
\begin{align*}
&h(u)\ge \frac{u^2}{2(1+u/3)}.
\end{align*}
By the Jensen inequality, the variables $X_i=\min(Y_i/{m},1)$ satisfy the assumptions of Theorem~\ref{th:freedman0} and we get in particular, since $X_i^2\le Y_i^2/{m}^2$,
\begin{align*}
\mathbb P\bigg(\sum_{i=1}^nY_i\geq t, &\,\max_{i=1,\ldots, n}|Y_i |\leq {m},\, 
\sum_{i=1}^n \mathbb E[Y_i^2|\mathcal F _{i-1}]\leq {v}\bigg) \\
&~~~~\le
\mathbb P\bigg(\sum_{i=1}^nX_i\geq t/{m}, \,\sum_{i=1}^n \mathbb E[X_i^2|\mathcal F _{i-1}]\leq {v}/m^2\bigg) \\
&~~~\leq \exp\left(-\frac{t^2}{2({v}+t{m}/3)} \right).
\end{align*}
By the symmetry of the assumptions on $(Y_i)$, the same inequality holds 
true with $-Y_i$ instead of $Y_i$ and we get the stated bound.
\end{proof}

\begin{theorem}\label{th:freedmanx}
Let $(\Omega, \mathcal F , (\mathcal F _{i})_{i\geq 1}, \mathbb P)$ be a filtered space. Let $(Y_i)_{i\geq 1} $ be a sequence of real valued stochastic processes defined on 
$\mathbb R^d$, adapted to $(\mathcal F_i)_{i\geq 1}$, such that
for any $x\in\mathbb R^d$
\begin{align*}
&\mathbb E [Y_i(x)| \mathcal F _{i-1}] = 0,\quad \text{for all } i\geq 1 .
\end{align*}
Consider $\varepsilon>0$ and let  $(\tilde Y_i)_{\geq 1} $ be another $(\mathcal F_i)_{i\geq 1}$-adapted sequence 
of nonnegative stochastic processes defined on $\mathbb R^d$ such that for all $i\geq 1$ and $x\in\mathbb R^d$
\begin{align*}
&\sup_{\|y\|\le\varepsilon}|Y_i(x+y)-Y_i(x)|\le \tilde Y_i(x).
\end{align*}
Let $n\geq 1$ and assume that for some $A\ge 0$ and some set $\Omega_1\subset \Omega$, one has for all $\omega\in\Omega_1$
and $\|x\|\le A$
\begin{align}
& \max_{i=1,\ldots, n} |Y_i(x)|\le m\label{thmm}\\
&\sum_{i=1} ^ n \mathbb E\big[Y_i(x)^2|\mathcal F_{i-1}]\le v\\
&\sum_{i=1} ^ n \mathbb E\big[\tilde Y_i(x)|\mathcal F_{i-1}\big]\le \tau
\end{align}
then, for all $t\geq 0$,
\begin{align*}
&\mathbb P\Big(\sup_{\|x\|\le A}\big|\sum_{i=1} ^ n Y_i(x)\big|>t+\tau,\Omega_1\Big)\le 4(1+2A/\varepsilon)^d \exp\left(-\frac{t^2}{8(\tilde v+2mt/3)} \right).
\end{align*}
with $\tilde v=\max(v,2m\tau).$
\end{theorem}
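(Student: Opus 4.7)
The natural strategy is a covering argument that reduces the supremum over the continuous ball $\{\|x\|\le A\}$ to a maximum over a finite set, combined with Theorem~\ref{th:freedman} applied twice: once to control each $\sum_{i=1}^n Y_i(x_0)$ at a net point $x_0$, and once to control the random remainder $\sum_i \tilde Y_i(x_0)$ via its conditional mean. First, I would fix a minimal $\varepsilon$-net $\mathcal N$ of $\{x:\|x\|\le A\}$ whose cardinality satisfies the classical volumetric bound $|\mathcal N|\le(1+2A/\varepsilon)^d$. For any $x$ with $\|x\|\le A$, choose $x_0\in\mathcal N$ with $\|x-x_0\|\le\varepsilon$; the modulus hypothesis then yields $\big|\sum_i Y_i(x)-\sum_i Y_i(x_0)\big|\le\sum_i\tilde Y_i(x_0)$, and therefore
\[
\sup_{\|x\|\le A}\Big|\sum_{i=1}^n Y_i(x)\Big|\le\max_{x_0\in\mathcal N}\Big(\Big|\sum_{i=1}^n Y_i(x_0)\Big|+\sum_{i=1}^n\tilde Y_i(x_0)\Big).
\]

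Next I would peel off the predictable part of $\sum_i\tilde Y_i(x_0)$: on $\Omega_1$, $\sum_i\mathbb E[\tilde Y_i(x_0)\mid\mathcal F_{i-1}]\le\tau$, so writing $R_n(x_0):=\sum_i\big(\tilde Y_i(x_0)-\mathbb E[\tilde Y_i(x_0)\mid\mathcal F_{i-1}]\big)$, the event $\{\sup>t+\tau\}\cap\Omega_1$ forces $|\sum Y_i(x_0)|+R_n(x_0)>t$ at some $x_0\in\mathcal N$, hence either $|\sum Y_i(x_0)|>t/2$ or $R_n(x_0)>t/2$. A union bound over $\mathcal N$ and over the two alternatives costs a factor $2|\mathcal N|$. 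To the first alternative I apply Theorem~\ref{th:freedman} directly with the hypotheses $|Y_i(x_0)|\le m$ and $\sum\mathbb E[Y_i(x_0)^2\mid\mathcal F_{i-1}]\le v$, giving $2\exp\!\big(-(t/2)^2/(2(v+(t/2)m/3))\big)$. To the second alternative I apply Theorem~\ref{th:freedman} to the martingale $R_n(x_0)$: replacing $\tilde Y_i$ by $\tilde Y_i\wedge 2m$ without weakening the modulus bound (since the two $|Y_i|$'s at $x_0$ and $x_0+y$ are controlled by $m$), the centered increments are bounded by $2m$, while $\sum\mathbb E[(\tilde Y_i-\mathbb E[\tilde Y_i\mid\mathcal F_{i-1}])^2\mid\mathcal F_{i-1}]\le\sum\mathbb E[\tilde Y_i^2\mid\mathcal F_{i-1}]\le 2m\tau$, giving $2\exp\!\big(-(t/2)^2/(2(2m\tau+(t/2)(2m)/3))\big)$.

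Finally I would compare the two exponentials: with $\tilde v=\max(v,2m\tau)$, both denominators are dominated by $8\tilde v+16mt/3=8(\tilde v+2mt/3)$, so each probability is $\le 2\exp\!\big(-t^2/(8(\tilde v+2mt/3))\big)$, and the union bound produces the announced factor $4(1+2A/\varepsilon)^d$.

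\paragraph{Main obstacle.} The only delicate step is the control of $R_n(x_0)$, because nothing a priori bounds $\tilde Y_i$ pointwise. The remedy is the observation above that one may truncate $\tilde Y_i$ at $2m$ without losing the modulus bound on $\{\|x_0\|\le A\}\cap\{\|x_0+y\|\le A\}$; the boundary case where $x_0+y$ leaves the ball can be absorbed by choosing the net inside $\{\|x\|\le A-\varepsilon\}$ (which does not change the covering number estimate up to constants) or by enlarging $A$ slightly. Once this truncation is in place, the quadratic-variation bound $\sum\mathbb E[\tilde Y_i^2\mid\mathcal F_{i-1}]\le 2m\,\tau$ — and thus the appearance of $\tilde v=\max(v,2m\tau)$ in the final exponent — is immediate from $\tilde Y_i\ge 0$ and $\tilde Y_i\le 2m$.
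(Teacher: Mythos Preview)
Your proposal is correct and follows essentially the same route as the paper's own proof: an $\varepsilon$-net with the standard cardinality bound $(1+2A/\varepsilon)^d$, the decomposition $\sum_i Y_i(x)\le|\sum_i Y_i(x_0)|+\sum_i\tilde Y_i(x_0)$, splitting off the predictable part of $\sum_i\tilde Y_i(x_0)$ by $\tau$, halving $t$, and applying Theorem~\ref{th:freedman} once to $\sum Y_i(x_0)$ and once to the centered $\sum(\tilde Y_i-\mathbb E[\tilde Y_i\mid\mathcal F_{i-1}])$ after truncating $\tilde Y_i$ at $2m$.

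One small simplification: your ``boundary'' worry in the obstacle paragraph is unnecessary. The modulus bound is only invoked as $|Y_i(x)-Y_i(x_0)|\le\tilde Y_i(x_0)$ where $x$ is the original point in $\{\|x\|\le A\}$ and $x_0$ its nearest net point (also in the ball); on $\Omega_1$ both satisfy $|Y_i|\le m$, so $|Y_i(x)-Y_i(x_0)|\le 2m$ automatically and the replacement $\tilde Y_i\mapsto\tilde Y_i\wedge 2m$ is harmless with no need to shrink the net or enlarge $A$. This is exactly how the paper handles it in its opening sentence.
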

\begin{proof}
Notice that in view of (\ref{thmm}), $\tilde Y_i$ can be replaced with 
$(2m)\wedge \tilde Y_i$, hence we can assume 
\begin{align*}
\forall\omega\in\Omega_1,~\forall \|x\|\le A~,~~\tilde Y_i(x)\le2m.
\end{align*}
Let $(x_k)_{k=1,\ldots N} $ be an $\varepsilon$-grid over 
 $\{\|x\|\leq A\}$, i.e.,  $\min_{k=1,\ldots N} \|x-x_k\|\leq \varepsilon$ if $\|x\|\leq A$.
 One can choose $N\le 1+2A/\varepsilon$ (\cite{temlyakov} Corollary 3.4) 
Then for any $x\in A$ and $\omega\in\Omega_1$:
\begin{align*}
\big|\sum_{i=1} ^ n  Y_i(x)\big|
&\le \sup_k\big| \sum_{i=1} ^ n  Y_i(x_k)\big|+\sup_k\sum_{i=1} ^ n  \tilde Y_i(x_k)\\
&\le \sup_k\big| \sum_{i=1} ^ n  Y_i(x_k)\big|+\sup_k\big|\sum_{i=1} ^ n  \tilde Y_i(x_k)-\mathbb E[\tilde Y_i(x_k)|\mathcal F_{i-1}]\big|+\tau
\end{align*}
hence
\begin{align*}
\mathbb P \Big(\sup_{\|x\|\le A}&\big| \sum_{i=1} ^ n  Y_i(x)\big|>t+\tau,\Omega_1\Big)
\le \sum_{k=1}^N\mathbb P\Big(\big|\sum_{i=1} ^ n  Y_i(x_k)\big|>\frac t2,\Omega_1\Big)\\
&~~~+\sum_k\mathbb P\Big(\big|\sum_{i=1} ^ n  \tilde Y_i(x_k)-\mathbb E[\tilde Y_i(x_k)|\mathcal F_{i-1}]\big|>\frac t2,\Omega_1\Big).
\end{align*}
We shall apply two times Theorem~\ref{th:freedman}. 
For the first term, we take $m$ and $v$ as given. 
For the second term, the uniform bound required in Theorem~\ref{th:freedman}  is obtained through
\begin{align*}
\max_{i= 1,\ldots, n} |\tilde Y_i(x_k)-\mathbb E[\tilde Y_i(x_k)|\mathcal F_{i-1}]|
\le2m
\end{align*}
because $0\le \tilde Y_i\le 2m$ on $\Omega_1$, 
and the quadratic variation bound follows from:
\begin{align*}
\sum_{i=1} ^ n  \mathbb E\Big[\big(\tilde Y_i(x_k)-\mathbb E[\tilde Y_i(x_k)|\mathcal F_{i-1}]\big)^2|\mathcal F_{i-1}\Big]
&\le \sum_{i=1} ^ n  \mathbb E\big[\tilde Y_i(x_k)^2|\mathcal F_{i-1}\big]\\
&\le 2m\sum_{i=1} ^ n  \mathbb E\big[\tilde Y_i(x_k)|\mathcal F_{i-1}\big]\\
&\le 2m\tau
\end{align*}
Finally we get
\begin{align*}
\mathbb P\Big(\sup_{\|x\|\le A}\big| \sum_{i=1} ^ n  Y_i(x)\big|>t+\tau,\Omega_1\Big)
&\le 2(1+2A/\varepsilon)^d\exp\left(-\frac{t^2}{8(v+mt/3)} \right)\\
& + 2(1+2A/\varepsilon)^d\exp\left(-\frac{t^2}{8(2m\tau+2mt/3)} \right)
\end{align*}
which implies the result.
\end{proof}

\begin{corollary}\label{th:freedmanZ} 
Let $(\Omega, \mathcal F , (\mathcal F _{i})_{i\geq 1}, \mathbb P)$ be a filtered space and $(X_i,w_i)_{i\geq 1}\subset \mathbb R^d \times \mathbb R_{\geq 0}$ be an $(\mathcal F_i)_{i\geq 1}$-adapted sequence of random variables such that 
for any positive function $\varphi$ and any $i\geq 1$,
\begin{align*}
\mathbb E\big[w_{i} \varphi(X_i)|\mathcal F_{i-1}\big]=\int \varphi(y)\mu(dy),
\end{align*} 
where $\mu$ is a probability measure on $\mathbb R^d$. Let $n\geq 1$ and assume that for some $\Omega_1\in  \mathcal F$ and for some function $M:\mathbb R^d \to  \mathbb R_{\geq 0}$,
\begin{align*}
\forall\omega\in\Omega_1, \quad  \forall i = 1,\ldots ,n,\quad  & w_i \le  M (X_i).
\end{align*} 
Let $K$ be a nonnegative bounded Lipschitz function on $\mathbb R^d$. Define
\begin{align*}
&Z (x) = \sum_{i=1}^n  \Big(w_{i} K_{h}(x-X_i ) -\int K_{h}(x-y )\mu(dy)\Big)
\end{align*} 
with $h>0$ and $K_h(x)=h^{-d}K(x/h)$ and set
\begin{align*}
&U_K=\sup_{x\in \mathbb R^d} K(x)\quad \text{and} \quad L_K=\sup_{x\in\mathbb R^d}\sup_{\|y\|>0} \frac{|K(x+y)-K(x)|}{\|y\|}.
\end{align*} 
Then, for all $t\geq 0$,
\begin{align*}
&\mathbb P\Big(\sup_{\|x\|\le A}|Z(x)|>t+\tau,\Omega_1\Big)
  \le 4(1+2A/\varepsilon)^d \exp\left(-\frac{t^2}{8(\tilde v+2mt/3)} \right)
\end{align*}
with 
\begin{align*}
&U_w =  \max_{x\in \mathbb R^d } M(x)\\
&m=U_K(1+U_w ) h^{-d}\\
&v= n  h^{-d}U_K  \int M(u) K_{h}(x-u ) \mu(du) \\
&\tau= 2L_K\varepsilon nh^{-d-1}  \\
&\tilde v=\max(v,2m\tau)
\end{align*} 

\end{corollary}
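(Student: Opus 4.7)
The plan is to apply Theorem~\ref{th:freedmanx} directly to the $\mathcal F_i$-adapted process
$$Y_i(x) = w_i K_h(x-X_i) - \int K_h(x-y)\,\mu(dy).$$
The martingale condition $\mathbb E[Y_i(x)\mid \mathcal F_{i-1}] = 0$ is immediate from the defining property $\mathbb E[w_i\varphi(X_i)\mid\mathcal F_{i-1}] = \int\varphi\,d\mu$ applied to $\varphi = K_h(x-\cdot)$.

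Next I would exhibit the dominating oscillation process $\tilde Y_i$. Since $K$ is $L_K$-Lipschitz, the rescaled kernel $K_h(u)=h^{-d}K(u/h)$ is Lipschitz with constant $L_K h^{-d-1}$. Hence for all $\|y\|\le\varepsilon$,
$$|Y_i(x+y)-Y_i(x)| \le L_K\varepsilon h^{-d-1}(w_i + 1)=:\tilde Y_i(x),$$
which is $\mathcal F_i$-adapted and nonnegative as required.

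Then I would check the three quantitative bounds that Theorem~\ref{th:freedmanx} demands, restricted to the event $\Omega_1$ where $w_i\le U_w$. For the uniform bound, since $\mu$ is a probability measure and $K_h\le U_K h^{-d}$, one has $|Y_i(x)|\le w_iU_Kh^{-d} + U_Kh^{-d} \le (1+U_w)U_Kh^{-d}=m$. For the quadratic variation, the key trick is the chain $w_i^2\le M(X_i)w_i$ (valid on $\Omega_1$), followed by $K_h^2\le U_Kh^{-d}K_h$, so that
$$\mathbb E[Y_i(x)^2\mid\mathcal F_{i-1}] \le \mathbb E[w_i^2K_h(x-X_i)^2\mid\mathcal F_{i-1}] \le U_Kh^{-d}\mathbb E[w_iM(X_i)K_h(x-X_i)\mid\mathcal F_{i-1}],$$
and applying the hypothesis on $\mu$ with $\varphi(y)=M(y)K_h(x-y)$ turns this into $U_Kh^{-d}\int M(u)K_h(x-u)\mu(du)$; summation over $i$ yields $v$. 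For the oscillation mean, $\mathbb E[w_i\mid\mathcal F_{i-1}]=\int d\mu = 1$, so $\sum_{i=1}^n\mathbb E[\tilde Y_i(x)\mid\mathcal F_{i-1}] = 2nL_K\varepsilon h^{-d-1}=\tau$.

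With these identifications $(m,v,\tau)$ matching the statement, Theorem~\ref{th:freedmanx} applied with $A$ and $\Omega_1$ delivers exactly the claimed inequality. I do not foresee a serious obstacle; the only subtle point is the reduction $w_i^2 \le M(X_i)w_i$ used to bring the $w_i^2$ moment into the form where the measure-change identity $\mathbb E[w_i\varphi(X_i)\mid\mathcal F_{i-1}] = \int\varphi\,d\mu$ can be invoked, which is what permits the conditional second moment to be controlled by a deterministic integral against $\mu$ and gives the $v$ appearing in the statement.
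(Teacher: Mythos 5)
Your proof is correct and follows essentially the same approach as the paper: apply Theorem~\ref{th:freedmanx} to $Y_i(x)=w_iK_h(x-X_i)-\int K_h(x-y)\,\mu(dy)$ and verify the bounds $m$, $v$, $\tau$ via the reduction $w_i^2K_h^2\le U_Kh^{-d}\,w_iM(X_i)K_h$ followed by the change-of-measure identity $\mathbb E[w_i\varphi(X_i)\mid\mathcal F_{i-1}]=\int\varphi\,d\mu$. One small remark: your dominating process $\tilde Y_i(x)=L_K\varepsilon h^{-d-1}(w_i+1)$ is the correct one for all values of $w_i$, whereas the paper takes $\tilde Y_i(x)=2w_iL_K\varepsilon h^{-d-1}$, which technically fails to dominate the oscillation when $w_i<1$; both lead to the same $\tau$ after conditioning, so the discrepancy is immaterial.
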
 
\begin{proof}We apply Theorem~\ref{th:freedmanx} with
\begin{align*}
Y_i(x) 
&= w_i  K_{h}(x-X_i )-\mathbb E\big[w_i K_{h}(x-X_i)|\mathcal F_{i-1}\big]\\
&= w_{i}  K_{h}(x-X_i )-\int K_{h}(x-u )\mu(du)
\end{align*} 
It remains to estimate $m,v$ and $\tau$. For $m$, we notice that 
\begin{align*}
&|Y_i(x)|\leq  U_K(1+U_w) h^{-d}.
\end{align*} 
Concerning $v$ we have  
\begin{align*}
\mathbb E\big[Y_i(x)^2|\mathcal F_{i-1}\big]
&=\mathbb E\big[w_{i}^2 K_{h}(x-X_i )^2|\mathcal F_{i-1}\big]\\
&\le h^{-d}U_K \mathbb E\big[w_{i} M(X_i) K_{h}(x-X_i ) |\mathcal F_{i-1}\big] \\
&=  h^{-d}U_K  \int M(y) K_{h}(x-u ) \mu(du) .
\end{align*} 
The estimation of $\tau$ is obtained by noticing that
\begin{align*}
&\sup_{\|y\|\le\varepsilon} |Y_i (x+y ) - Y_i (x) | \\
&=\sup_{\|y\|\le\varepsilon}w_{i} |K_h(x+  y - X_i)-K_h(x - X_i)| \\
&~~~~+\int \sup_{\|y\|\le\varepsilon} |K_h(x+  y - u)-K_h(x - u)| \mu(du) \\
&\le  h^{-d}L_K\varepsilon h^{-1} (w_i +1),
\end{align*} 
Take $\tilde Y_i (x) =2 w_{i} h^{-d}L_K\varepsilon h^{-1} $ and verify that, since $E[w_{i}|\mathcal F_{i-1}\big]=1$,
\begin{align*}
\sum_{i=1} ^n \mathbb E[\tilde Y_i(x)|\mathcal F_{i-1}\big]
&= 2L_K\varepsilon nh^{-d-1} .\qedhere
\end{align*} 
\end{proof}

\paragraph{Acknowledgement} The authors are grateful to Pierre E. Jacob for some helpful comments on the use of the \texttt{PAWL} package.

\bibliographystyle{chicago}
\bibliography{revision_F.bbl}

\begin{thebibliography}{}

\bibitem[\protect\citeauthoryear{Aza\"{\i}s, Delyon, and Portier}{Aza\"{\i}s
  et~al.}{2018}]{azais+d+p:2018}
Aza\"{\i}s, R., B.~Delyon, and F.~Portier (2018).
\newblock Integral estimation based on {M}arkovian design.
\newblock {\em Adv. in Appl. Probab.\/}~{\em 50\/}(3), 833--857.

\bibitem[\protect\citeauthoryear{Bertail and Portier}{Bertail and
  Portier}{2019}]{bertail+p:2018}
Bertail, P. and F.~Portier (2019).
\newblock Rademacher complexity for {M}arkov chains: applications to kernel
  smoothing and {M}etropolis-{H}astings.
\newblock {\em Bernoulli\/}~{\em 25\/}(4B), 3912--3938.

\bibitem[\protect\citeauthoryear{Bornn, Jacob, Del~Moral, and Doucet}{Bornn
  et~al.}{2013}]{bornn+j+d+d:2013}
Bornn, L., P.~E. Jacob, P.~Del~Moral, and A.~Doucet (2013).
\newblock An adaptive interacting wang--landau algorithm for automatic density
  exploration.
\newblock {\em Journal of Computational and Graphical Statistics\/}~{\em
  22\/}(3), 749--773.

\bibitem[\protect\citeauthoryear{Boucheron, Lugosi, and Massart}{Boucheron
  et~al.}{2013}]{massart}
Boucheron, S., G.~Lugosi, and P.~Massart (2013).
\newblock {\em Concentration inequalities}.
\newblock Oxford University Press, Oxford.
\newblock A nonasymptotic theory of independence, With a foreword by Michel
  Ledoux.

\bibitem[\protect\citeauthoryear{Capp{\'e}, Douc, Guillin, Marin, and
  Robert}{Capp{\'e} et~al.}{2008}]{cappe+d+g+m+r:2008}
Capp{\'e}, O., R.~Douc, A.~Guillin, J.-M. Marin, and C.~P. Robert (2008).
\newblock Adaptive importance sampling in general mixture classes.
\newblock {\em Statistics and Computing\/}~{\em 18\/}(4), 447--459.

\bibitem[\protect\citeauthoryear{Capp{\'e}, Guillin, Marin, and
  Robert}{Capp{\'e} et~al.}{2004}]{cappe+g+m+r:2004}
Capp{\'e}, O., A.~Guillin, J.-M. Marin, and C.~P. Robert (2004).
\newblock Population monte carlo.
\newblock {\em Journal of Computational and Graphical Statistics\/}~{\em
  13\/}(4), 907--929.

\bibitem[\protect\citeauthoryear{Chopin}{Chopin}{2004}]{chopin:2004}
Chopin, N. (2004).
\newblock Central limit theorem for sequential monte carlo methods and its
  application to bayesian inference.
\newblock {\em The Annals of Statistics\/}~{\em 32\/}(6), 2385--2411.

\bibitem[\protect\citeauthoryear{Del~Moral}{Del~Moral}{2013}]{del:2013}
Del~Moral, P. (2013).
\newblock {\em Mean field simulation for {M}onte {C}arlo integration}, Volume
  126 of {\em Monographs on Statistics and Applied Probability}.
\newblock CRC Press, Boca Raton, FL.

\bibitem[\protect\citeauthoryear{Delyon and Portier}{Delyon and
  Portier}{2018}]{delyon+p:2018}
Delyon, B. and F.~Portier (2018).
\newblock Asymptotic optimality of adaptive importance sampling.
\newblock In {\em Proceedings of the 32nd International Conference on Neural
  Information Processing Systems}, pp.\  3138--3148.

\bibitem[\protect\citeauthoryear{Douc, Guillin, Marin, and Robert}{Douc
  et~al.}{2007a}]{douc+g+m+r:2007a}
Douc, R., A.~Guillin, J.-M. Marin, and C.~P. Robert (2007a).
\newblock Convergence of adaptive mixtures of importance sampling schemes.
\newblock {\em The Annals of Statistics\/}, 420--448.

\bibitem[\protect\citeauthoryear{Douc, Guillin, Marin, and Robert}{Douc
  et~al.}{2007b}]{douc+g+m+r:2007b}
Douc, R., A.~Guillin, J.-M. Marin, and C.~P. Robert (2007b).
\newblock Minimum variance importance sampling via population monte carlo.
\newblock {\em ESAIM: Probability and Statistics\/}~{\em 11}, 427--447.

\bibitem[\protect\citeauthoryear{Elvira, Martino, Luengo, and Bugallo}{Elvira
  et~al.}{2019}]{elvira+m+l+b:2015}
Elvira, V., L.~Martino, D.~Luengo, and M.~F. Bugallo (2019).
\newblock Generalized multiple importance sampling.
\newblock {\em Statist. Sci.\/}~{\em 34\/}(1), 129--155.

\bibitem[\protect\citeauthoryear{Evans and Swartz}{Evans and
  Swartz}{2000}]{evans:2000}
Evans, M. and T.~Swartz (2000).
\newblock {\em Approximating integrals via {M}onte {C}arlo and deterministic
  methods}.
\newblock Oxford Statistical Science Series. Oxford University Press, Oxford.

\bibitem[\protect\citeauthoryear{Feng, Maggiar, Staum, and W{\"a}chter}{Feng
  et~al.}{2018}]{feng+m+s+w:2018}
Feng, M.~B., A.~Maggiar, J.~Staum, and A.~W{\"a}chter (2018).
\newblock Uniform convergence of sample average approximation with adaptive
  multiple importance sampling.
\newblock In {\em 2018 Winter Simulation Conference (WSC)}, pp.\  1646--1657.
  IEEE.

\bibitem[\protect\citeauthoryear{Folland}{Folland}{2013}]{folland:2013}
Folland, G.~B. (2013).
\newblock {\em Real analysis: modern techniques and their applications}.
\newblock John Wiley \& Sons.

\bibitem[\protect\citeauthoryear{Fort, Jourdain, Kuhn, Leli{\`e}vre, and
  Stoltz}{Fort et~al.}{2015}]{fort+j+k+l+s:2015}
Fort, G., B.~Jourdain, E.~Kuhn, T.~Leli{\`e}vre, and G.~Stoltz (2015).
\newblock Convergence of the wang-landau algorithm.
\newblock {\em Mathematics of Computation\/}~{\em 84\/}(295), 2297--2327.

\bibitem[\protect\citeauthoryear{Freedman}{Freedman}{1975}]{freedman:1975}
Freedman, D.~A. (1975).
\newblock On tail probabilities for martingales.
\newblock {\em Ann. Probability\/}~{\em 3}, 100--118.

\bibitem[\protect\citeauthoryear{Geweke}{Geweke}{1989}]{geweke:1989}
Geweke, J. (1989).
\newblock Bayesian inference in econometric models using monte carlo
  integration.
\newblock {\em Econometrica: Journal of the Econometric Society\/}, 1317--1339.

\bibitem[\protect\citeauthoryear{Gin{\'e} and Guillou}{Gin{\'e} and
  Guillou}{2001}]{gigui2001}
Gin{\'e}, E. and A.~Guillou (2001).
\newblock On consistency of kernel density estimators for randomly censored
  data: rates holding uniformly over adaptive intervals.
\newblock {\em Ann. Inst. H. Poincar\'e Probab. Statist.\/}~{\em 37\/}(4),
  503--522.

\bibitem[\protect\citeauthoryear{Gin{\'e} and Guillou}{Gin{\'e} and
  Guillou}{2002}]{gine+g:02}
Gin{\'e}, E. and A.~Guillou (2002).
\newblock Rates of strong uniform consistency for multivariate kernel density
  estimators.
\newblock {\em Ann. Inst. H. Poincar\'e Probab. Statist.\/}~{\em 38\/}(6),
  907--921.
\newblock En l'honneur de J. Bretagnolle, D. Dacunha-Castelle, I. Ibragimov.

\bibitem[\protect\citeauthoryear{Givens and Raftery}{Givens and
  Raftery}{1996}]{givens+r:1996}
Givens, G.~H. and A.~E. Raftery (1996).
\newblock Local adaptive importance sampling for multivariate densities with
  strong nonlinear relationships.
\newblock {\em Journal of the American Statistical Association\/}~{\em
  91\/}(433), 132--141.

\bibitem[\protect\citeauthoryear{Gordon, Salmond, and Smith}{Gordon
  et~al.}{1993}]{gordon+s+s:1993}
Gordon, N.~J., D.~J. Salmond, and A.~F. Smith (1993).
\newblock Novel approach to nonlinear/non-gaussian bayesian state estimation.
\newblock In {\em IEE proceedings F (radar and signal processing)}, Volume 140,
  pp.\  107--113. IET.

\bibitem[\protect\citeauthoryear{Haario, Saksman, and Tamminen}{Haario
  et~al.}{2001}]{haario+s+t:2001}
Haario, H., E.~Saksman, and J.~Tamminen (2001).
\newblock An adaptive metropolis algorithm.
\newblock {\em Bernoulli\/}~{\em 7\/}(2), 223--242.

\bibitem[\protect\citeauthoryear{Hall and Heyde}{Hall and
  Heyde}{1980}]{hall+h:1980}
Hall, P. and C.~C. Heyde (1980).
\newblock {\em Martingale limit theory and its application}.
\newblock Academic Press, Inc. [Harcourt Brace Jovanovich, Publishers], New
  York-London.
\newblock Probability and Mathematical Statistics.

\bibitem[\protect\citeauthoryear{Hansen}{Hansen}{2008}]{hansen:2008}
Hansen, B.~E. (2008).
\newblock Uniform convergence rates for kernel estimation with dependent data.
\newblock {\em Econometric Theory\/}~{\em 24\/}(3), 726--748.

\bibitem[\protect\citeauthoryear{Hesterberg}{Hesterberg}{1995}]{hesterberg:1995}
Hesterberg, T. (1995).
\newblock Weighted average importance sampling and defensive mixture
  distributions.
\newblock {\em Technometrics\/}~{\em 37\/}(2), 185--194.

\bibitem[\protect\citeauthoryear{Kloek and Van~Dijk}{Kloek and
  Van~Dijk}{1978}]{kloek+v:1978}
Kloek, T. and H.~K. Van~Dijk (1978).
\newblock Bayesian estimates of equation system parameters: an application of
  integration by monte carlo.
\newblock {\em Econometrica: Journal of the Econometric Society\/}, 1--19.

\bibitem[\protect\citeauthoryear{Marin, Pudlo, and Sedki}{Marin
  et~al.}{2019}]{marin+p+s:2019}
Marin, J.-M., P.~Pudlo, and M.~Sedki (2019).
\newblock Consistency of adaptive importance sampling and recycling schemes.
\newblock {\em Bernoulli\/}~{\em 25\/}(3), 1977--1998.

\bibitem[\protect\citeauthoryear{Neddermeyer}{Neddermeyer}{2009}]{neddermeyer:2009}
Neddermeyer, J.~C. (2009).
\newblock Computationally efficient nonparametric importance sampling.
\newblock {\em Journal of the American Statistical Association\/}~{\em
  104\/}(486), 788--802.

\bibitem[\protect\citeauthoryear{Oh and Berger}{Oh and
  Berger}{1992}]{ho+b:1992}
Oh, M.-S. and J.~O. Berger (1992).
\newblock Adaptive importance sampling in {M}onte {C}arlo integration.
\newblock {\em J. Statist. Comput. Simulation\/}~{\em 41\/}(3-4), 143--168.

\bibitem[\protect\citeauthoryear{Owen}{Owen}{2013}]{owen:13}
Owen, A.~B. (2013).
\newblock {\em Monte Carlo theory, methods and examples}.

\bibitem[\protect\citeauthoryear{Owen and Zhou}{Owen and
  Zhou}{2000}]{owen+z:2000}
Owen, A.~B. and Y.~Zhou (2000).
\newblock Safe and effective importance sampling.
\newblock {\em J. Amer. Statist. Assoc.\/}~{\em 95\/}(449), 135--143.

\bibitem[\protect\citeauthoryear{Pollard}{Pollard}{1984}]{pollard}
Pollard, D. (1984).
\newblock {\em Convergence of stochastic processes}.
\newblock Springer Series in Statistics. Springer-Verlag, New York.

\bibitem[\protect\citeauthoryear{Robert and Casella}{Robert and
  Casella}{2004}]{robert:2004}
Robert, C.~P. and G.~Casella (2004).
\newblock {\em Monte {C}arlo statistical methods\/} (Second ed.).
\newblock Springer Texts in Statistics. Springer-Verlag, New York.

\bibitem[\protect\citeauthoryear{Silverman}{Silverman}{2018}]{silverman:2018}
Silverman, B.~W. (2018).
\newblock {\em Density estimation for statistics and data analysis}.
\newblock Routledge.

\bibitem[\protect\citeauthoryear{Stone}{Stone}{1980}]{stone:1980}
Stone, C.~J. (1980).
\newblock Optimal rates of convergence for nonparametric estimators.
\newblock {\em The annals of Statistics\/}, 1348--1360.

\bibitem[\protect\citeauthoryear{Temlyakov}{Temlyakov}{1998}]{temlyakov}
Temlyakov, V.~N. (1998).
\newblock The best m-term approximation and greedy algorithms.
\newblock {\em Advances in Computational Mathematics\/}~{\em 8\/}(3), 249--265.

\bibitem[\protect\citeauthoryear{van~der Vaart}{van~der
  Vaart}{1998}]{vandervaart:1998}
van~der Vaart, A.~W. (1998).
\newblock {\em Asymptotic statistics}, Volume~3 of {\em Cambridge Series in
  Statistical and Probabilistic Mathematics}.
\newblock Cambridge University Press, Cambridge.

\bibitem[\protect\citeauthoryear{Wang and Landau}{Wang and
  Landau}{2001}]{wang+l:2001}
Wang, F. and D.~Landau (2001).
\newblock Determining the density of states for classical statistical models: A
  random walk algorithm to produce a flat histogram.
\newblock {\em Physical Review E\/}~{\em 64\/}(5), 056101.

\bibitem[\protect\citeauthoryear{West}{West}{1993}]{west:1993}
West, M. (1993).
\newblock Approximating posterior distributions by mixtures.
\newblock {\em Journal of the Royal Statistical Society: Series B
  (Methodological)\/}~{\em 55\/}(2), 409--422.

\bibitem[\protect\citeauthoryear{Zhang}{Zhang}{1996}]{zhang:1996}
Zhang, P. (1996).
\newblock Nonparametric importance sampling.
\newblock {\em J. Amer. Statist. Assoc.\/}~{\em 91\/}(435), 1245--1253.

\end{thebibliography}

\begin{appendices}

\section{Asymptotic normality of $f_n$}\label{sec:weakcvfn}

\begin{theorem}[asymptotic normality of density estimate]\label{th:cvd}
Assume \ref{cond:f}, \ref{cond:f_tail}, \ref{cond:K} and work under policy \eqref{sampler_update}. If there exists $\epsilon >0$ such that  $a_n +  h_n ^4 \ll \lambda_n^{1+\epsilon}   \ll 1$ then for any $x\in\mathbb R^d$, as $n\to \infty$,
\begin{align*}
\big(nh_n^d\big)^{1/2}\left(f_n(x)-\tilde f_n(x) \right)\leadsto \mathcal N\Big(0,  f(x)  \int K^2\Big) .
\end{align*}
\end{theorem}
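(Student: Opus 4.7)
The plan is to start from decomposition~\eqref{df1},
\begin{align*}
f_n(x)-\tilde f_n(x)=\frac{Z_n(x)-\tilde f_n(x)M_n}{n+M_n},
\end{align*}
and multiply by $\sqrt{nh_n^d}$. The hypothesis $a_n+h_n^4\ll\lambda_n^{1+\epsilon}\ll 1$ implies via Lemma~\ref{lemma:initial_consistency} that~\eqref{qfhyp} is satisfied by $f_n$, so Lemmas~\ref{lemma:1}--\ref{lemma:2} are available. The argument used inside the proof of Lemma~\ref{lemma:2}, specialized to $g\equiv 1$, yields $M_n/\sqrt n\to 0$ in probability: the martingale array $(W_i-1)/\sqrt n$ satisfies Theorem~\ref{th:hall} with zero limiting variance (the conditional variance is $n^{-1}\sum_i(\int f^2/q_{i-1}-1)\to 0$ a.s.~by~\eqref{important_bound_3} and dominated convergence). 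Hence $\sqrt{h_n^d/n}\,\tilde f_n(x)M_n=\tilde f_n(x)\sqrt{h_n^d}\cdot o_P(1)=o_P(1)$ and $M_n/n=o_P(1)$, so Slutsky's lemma reduces the theorem to
\begin{align*}
\sqrt{h_n^d/n}\,Z_n(x)\leadsto\mathcal N\Big(0,\,f(x)\int K^2\Big).
\end{align*}

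The summands of $Z_n(x)$ are $(\mathcal F_i)$-martingale differences: since $X_i\mid\mathcal F_{i-1}\sim q_{i-1}$, one has $\mathbb E[W_iK_{h_n}(x-X_i)\mid\mathcal F_{i-1}]=(K_{h_n}*f)(x)=\tilde f_n(x)$. I would apply Theorem~\ref{th:hall} to $w_{n,i}=\sqrt{h_n^d/n}\,(W_iK_{h_n}(x-X_i)-\tilde f_n(x))$. For the variance condition~\eqref{hh2}, the conditional second moment equals
\begin{align*}
\sum_{i=1}^n\mathbb E[w_{n,i}^2\mid\mathcal F_{i-1}]
=\frac{h_n^d}{n}\sum_{i=1}^n\int K_{h_n}(x-y)^2\frac{f(y)^2}{q_{i-1}(y)}\,dy-h_n^d\,\tilde f_n(x)^2,
\end{align*}
and the substitution $u=(x-y)/h_n$ rewrites the leading piece as $\int K(u)^2\bar g_n(x-h_nu)\,du$ with $\bar g_n(y)=n^{-1}\sum_{i=1}^n f(y)^2/q_{i-1}(y)$.

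The main technical step is controlling $\bar g_n$ uniformly. I would invoke Lemma~\ref{lemma:fqi},~\eqref{important_bound_2}, which gives $\sup_y|f(y)^2/q_i(y)-f(y)|=O(\lambda_i^\varepsilon)$ a.s.; a Ces\`aro step then yields $\sup_y|\bar g_n(y)-f(y)|\to 0$ a.s. Combined with the domination $f^2/q_i\le Cq_0\le CU_{q_0}$ from~\eqref{important_bound_3}, the continuity of $f$, and dominated convergence, this produces $\int K(u)^2\bar g_n(x-h_nu)\,du\to f(x)\int K^2$ a.s. The residual $h_n^d\tilde f_n(x)^2$ contribution is $O(h_n^d)=o(1)$, so~\eqref{hh2} holds with $v_*=f(x)\int K^2$. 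The Lindeberg condition~\eqref{hh3} follows from the deterministic bound
\begin{align*}
|w_{n,i}|\le\sqrt{h_n^d/n}\Big(\tfrac{c_1 U_K}{\lambda_n h_n^d}+U_f\Big)=\frac{c_1 U_K\,a_n}{\lambda_n\sqrt{\log n}}+U_f\sqrt{h_n^d/n}=o(1),
\end{align*}
since $a_n\ll\lambda_n^{1+\epsilon}$ forces $a_n/\lambda_n\to 0$; the indicator $1_{\{|w_{n,i}|>\eta\}}$ thus vanishes for $n$ large. Theorem~\ref{th:hall} then delivers the CLT and the proof is complete.
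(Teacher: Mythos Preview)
Your proof is correct and follows essentially the same route as the paper's: reduce via the decomposition \eqref{df1} and Slutsky to a CLT for $\sqrt{h_n^d/n}\,Z_n(x)$, then apply Theorem~\ref{th:hall}, checking \eqref{hh2} through \eqref{important_bound_2} (the paper writes the integrand as $\varphi_n^2/q_{i-1}$ with $\varphi_n(y)=f(y)K_{h_n}(x-y)$ and compares to $\varphi_n^2/f$, whereas you first Ces\`aro-average to $\bar g_n$ and then pass to the limit, but the key estimate $|f^2/q_i-f|\le C\lambda_i^{\varepsilon}$ is identical) and checking \eqref{hh3} by showing the increments are uniformly $o(1)$ via $a_n/\lambda_n\to 0$.
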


\begin{proof}
Apply Lemma \ref{lemma:initial_consistency} to obtain that (\ref{qfhyp}) is valid with $f_n$ in place of $f_n^\circ$. In particular \eqref{important_bound_2} is valid.
As in the beginning of the proof of \eqref{gnorm}, we have
\begin{align*}
f_{n}(x)-\tilde f_{n}(x)
  = \frac1{I_n(1)}\left\{\Big(I_n(g)-\tilde f_{n}(x) \Big)-\big(I_n(1)-1\big) \tilde f_{n}(x)  \right\},
\end{align*}
with $g(y) = K_{h_n}(x-y)$. Because $\tilde f_{n}(x) \leq U_f$ and $n^{1/2} (I_n(1)-1) = o_{\mathbb P} (1)$, that has been established in the proof of \eqref{gnorm}, it suffices to prove that
\begin{align*}
&\sqrt {h_n^d/n}\,Z_n(y)\leadsto \mathcal N\Big(0, f(x)\int K^2 \Big).
\end{align*}
We will apply Theorem \ref{th:hall} with 
\begin{align*}
w_{n,i}
&= \sqrt {h_n^d/n}\,\left\{ w_i K_{h_n}(x-X_i ) - \int f(y) K_{h_n} (x-y)\,\diff y \right\}\\
&= \sqrt {h_n^d/n}\,\left\{ \frac{\varphi_n(X_i)}{q_{i-1}(X_i)}-\int \varphi_n \right\},
\end{align*}
and $\varphi_n(y)=f(y)K_{h_n}(x-y)$. Equation (\ref{hh1}) is satisfied. We now show (\ref{hh2}) with $v_*=f(x)\int K^2$, or equivalently that
\begin{align*}
\frac{h_n^d}n\sum_{i=1}^n \left\{\int \frac{\varphi_n^2 }{ q_{i-1}} -\Big(\int \varphi_n\Big)^2 \right\} \to f(x)\int K^2  , \qquad \text{in probability.}
\end{align*}
Since  $\int\varphi_n\le \sup_{x\in \mathbb R^d}|f(x) |$, and since
\begin{align*}
&h_n^d\int \frac{\varphi_n^2(y)}{ f(y)}\diff y
=\int f(x+h_nu)K(u)^2\diff u
\to f(x)\int K^2
\end{align*}
it suffices to prove that
\begin{align}\label{f2g22}
\frac{1}n\sum_{i=1}^n \int 
  h_n^d\Big|\frac{\varphi_n^2(y)}{ q_{i-1}(y)}-\frac{\varphi_n^2(y) }{f(y)}  \Big|\diff y
  \to 0  , \qquad \text{in probability.}
\end{align}
But using (\ref{important_bound_2}) and the specific form of $\varphi_n$, we have
\begin{align*}
\frac{1}n\sum_{i=1}^n\int  h_n^d\Big|\frac{\varphi_n^2(y)}{ q_i(y)}-\frac{\varphi_n^2(y) }{f(y)}  \Big|\diff y
&\le
  \frac{C}n\sum_{i=1}^n\int h_n^d\lambda_i^\varepsilon K_{h_n}(x-y)^2\diff y\\
&\le
  \frac{C}  n \int K ^2 \sum_{i=1}^n\lambda_i^{\varepsilon}
\end{align*}
which tends to zero since $\lambda_i$ tend to zero.

Finally, we verify the Lindeberg condition (\ref{hh3}). We have to prove that
\begin{align*}
\frac{h_n^d}n\sum_{i=1}^n \int \left(\frac{\varphi_n(y) }{ q_{i-1}(y)} -\int\varphi_n\right)^2
q_{i-1}(y) \mathrm I_{A_{n,i}} (y)
\,\diff y \to 0~~~~ \text{in probability}
\end{align*}
with  $A_{ni}=\big\{y:|\varphi_n(y) / q_{i-1}(y)-\int\varphi_n| >\varepsilon \sqrt{nh_n^{-d}}\big\}$. 
But using  the fact that $\lambda_i$ is decreasing:
\begin{align*}
\sup_{i\le n}\frac{ \varphi_n(y) }{ q_i(y)}
\le   U_K c_1 h_n^{-d}\lambda_n^{-1}
=  U_K c_1\sqrt{nh_n^{-d}}\Big(\frac{a_n\lambda_n^{-1}}{\sqrt{\log n}}\Big)
\end{align*}
Since by assumption $a_n=O(\lambda_n)$, the parenthesized term tends to zero, and this implies,
$\int\varphi_n$ being bounded, that for $n$ large enough,
the sets $A_{ni}$, $1\le i\le n$ are all empty. The Lindeberg condition is thus satisfied.
\end{proof}

\section{The compact case}\label{sec:compact_case}

In this section we present a bound for the variance term $f_n-\tilde f_n$, which is analogous to the one of Theorem~\ref{lemma:true_rate}. The bound on the bias term  $\tilde f_n-f$
can be easily treated analogously to Lemma~\ref{lemma:bias}, under suitable assumptions.


\begin{enumerate}[label=(\text{H}\arabic*),resume=count_cond]
\item \label{cond:compact_case}The support of $f$, $S_f$, is compact and for all $x\in S_f$ 
we have $ L_f\leq  f(x) \leq U_f$. For all $x\in S_f$, ${q_0}(x) \geq L_{q_0}>0$.

In addition
\begin{align}\label{eqast}
\min_{x\in S_f}\min_{h\le h_1}(\mathrm I_{\{S_f\}} * K_h)(x)=C_{SK}>0.
\end{align}
\end{enumerate}

It is not difficult to prove that (\ref{eqast}) is satisfied if $S_f$ is convex
(since it is also bounded). The following event 
\begin{align*}
\Omega_L = \{\omega \,:\, \forall n\geq 1 \, \, \inf_{y\in S_f} q_n(y) \geq L\} ,
\end{align*}
will play an important role in the following. We state the key property related to $\Omega_L$ in the following lemma.

\begin{lemma}\label{lemma:proba_control_compact_case}

Under \ref{cond:f}, \ref{cond:K}, \ref{cond:compact_case}, if $(\lambda_n)_{n\geq 0}$ and $(h_n)_{n\geq 1}$ are positive decreasing sequences such that  $ \log(n) /   nh_n^d  \ll \lambda_n$, 
then with probability $1$,
\begin{align*}
\liminf_{n\to \infty} \inf_{x\in S_f}  q_{n}(x)  \geq   L_fC_{SK}.
\end{align*}
Moreover $\mathbb P ( \Omega_L) \to 1$ as $L\to 0$.
\end{lemma}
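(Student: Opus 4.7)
The plan is to combine a deterministic lower bound on $q_n$ coming from the mixture structure with a concentration bound on the kernel estimate $f_n$, and to invoke the geometric assumption \eqref{eqast} to transfer a lower bound from the smoothed target $\tilde f_n = f * K_{h_n}$ to $f_n$ itself.

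First, because $q_k = (1-\lambda_k)f_k + \lambda_k q_0$ with $f_k \geq 0$, the deterministic bound $q_k(x) \geq \lambda_k L_{q_0}$ holds on $S_f$. Since $f$ vanishes off $S_f$, this yields the uniform weight bound $W_i \leq U_f/(\lambda_n L_{q_0})$ for all $i \leq n$, using that $\lambda$ is nonincreasing. This is the ingredient that allows the Freedman-type inequalities of Section \ref{append_freedman} to apply, and it plays the role assumption \ref{cond:f_tail} plays in the non-compact case.

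Set $M_n = \sum_{i=1}^n (W_i - 1)$ and $Z_n(x) = \sum_{i=1}^n \{W_i K_{h_n}(x-X_i) - \tilde f_n(x)\}$. I would apply Corollary \ref{th:freedmanZ} with $\Omega_1 = \Omega$ and $M \equiv U_f/(\lambda_n L_{q_0})$, exploiting the fact that $\mathbb E[W_i \varphi(X_i) \mid \mathcal F_{i-1}] = \int \varphi f$ independently of the evolving policy $q_{i-1}$. With $\varepsilon = h_n^{d+1}/n$ the parameters compute to $m = O(h_n^{-d}/\lambda_n)$, $\tilde v = O(nh_n^{-d}/\lambda_n)$, $\tau = O(1)$, and covering $S_f$ requires only a grid of polynomial size. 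Taking $t = \gamma\sqrt{n\log(n)/(h_n^d \lambda_n)}$ with $\gamma$ large and using the hypothesis $\log(n)/(nh_n^d) \ll \lambda_n$ to ensure $t/m \to \infty$, the Bennett exponent in Corollary \ref{th:freedmanZ} is of order $-\gamma^2 \log n$, which dominates the logarithm of the covering number. Borel--Cantelli then gives $\sup_{x\in S_f}|Z_n(x)| = O(\sqrt{n\log n/(h_n^d \lambda_n)})$ a.s., and a parallel application of Theorem \ref{th:freedman} to $M_n$ gives $|M_n| = O(\sqrt{n\log n/\lambda_n}) = o(n)$ a.s. Inserting these into the identity $f_n - \tilde f_n = (Z_n - \tilde f_n M_n)/(n+M_n)$ with $\tilde f_n \leq U_f$ yields $\sup_{x \in S_f}|f_n(x) - \tilde f_n(x)| = O(a_n \lambda_n^{-1/2}) = o(1)$ a.s.

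The bias step invokes \eqref{eqast}: for $x \in S_f$ and $h_n \leq h_1$ (which holds eventually since $h_n$ is decreasing), $\tilde f_n(x) \geq L_f (\mathbb I_{S_f} * K_{h_n})(x) \geq L_f C_{SK}$. Hence $\inf_{x \in S_f} f_n(x) \geq L_f C_{SK} - o(1)$ a.s., and since $q_n \geq (1-\lambda_n) f_n$ with $\lambda_n \to 0$, the first claim $\liminf_n \inf_{x \in S_f} q_n(x) \geq L_f C_{SK}$ follows. For the second claim, the deterministic mixture bound already gives $\inf_{x \in S_f} q_n(x) \geq \lambda_n L_{q_0} > 0$ for every finite $n$; combined with the strictly positive liminf, the overall infimum $\inf_{n \geq 1}\inf_{x \in S_f} q_n(x)$ is strictly positive on a full-probability event, and continuity of probability along the nested family $\{\Omega_L\}_{L > 0}$ yields $\mathbb P(\Omega_L) \to 1$ as $L \to 0$. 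The main obstacle is the concentration step: despite the weights growing as $O(1/\lambda_n)$, the hypothesis $\log(n)/(nh_n^d) \ll \lambda_n$ is precisely tight enough to keep the Bennett exponent above the logarithm of the covering number of $S_f$.
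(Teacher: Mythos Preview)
Your proposal is correct and follows essentially the same route as the paper. The paper simply invokes Lemma~\ref{lemma:initial_variance} (whose formal hypotheses include \ref{cond:f_tail}, but whose proof only uses the weight bound $f/q_{i-1}\le c_1\lambda_n^{-1}$, which follows equally from \ref{cond:compact_case} via $f\le U_f$ and $q_0\ge L_{q_0}$ on $S_f$) to obtain $\sup_{S_f}|f_n-\tilde f_n|\to 0$, and then argues exactly as you do from $q_n\ge(1-\lambda_n)f_n$, the lower bound $\tilde f_n\ge L_fC_{SK}$ from \eqref{eqast}, and the deterministic floor $q_n\ge\lambda_n L_{q_0}$. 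You have merely unpacked the appeal to Lemma~\ref{lemma:initial_variance} by rerunning Corollary~\ref{th:freedmanZ} and Theorem~\ref{th:freedman} with the constant $U_f/L_{q_0}$ in place of $c_1$, which is precisely what the paper's citation is implicitly asking the reader to do.
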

\begin{proof}

Recall (\ref{decomp_final}) and apply Lemma \ref{lemma:initial_variance} (using that $\log(n) /   nh_n^d  \ll \lambda_n$ implies that $\log(n) /   n  \ll \lambda_n$) to obtain that $M_n = o(n)$. Finally, applying again Lemma \ref{lemma:initial_variance}, and identity (\ref{df1}), 
we obtain that with probability $1$,
\begin{align*}
\sup_{y\in S_f} |  f_{n}(y) - \tilde f_{n}(y)  |   \to  0. 
\end{align*}
Now write,
\begin{align*}
 q_{n}(y) &\geq (1-\lambda_n) f_n\geq (1-\lambda_n)( \tilde f_{n}(y)  -  | f_{n}(y) - \tilde f_{n}(y) | ).
\end{align*}
Using (\ref{eqast}) gives that
\begin{align*}
 \inf_{y\in S_f}q_{n}(y) &\geq  (1-\lambda_n) L_fC_{SK}  -  \sup_{y\in S_f}  | f_{n}(y) - \tilde f_{n}(y) | .
\end{align*}
Taking the limit we obtain the first statement. By assumption, 
the variable $U_n= \inf_{y\in S_f} q_n(y)$ satisfies  $U_n\geq  \lambda_n L_{q_0}$; 
since in addition $\lim\inf_n U_n\ge L_fC_{SK}$, we have $\inf_n U_n>0$ almost surely.
Hence $\mathbb P(\Omega_L)\to 1$ as $L\to 0$.
\end{proof}

This result shows that the conditions on $\lambda_n$ are weakened in the compact case as $ a_n^{2-\delta} \ll \lambda_n  $ is replaced by $ a_n^2 \ll \lambda_n $.

\begin{theorem}[compact case]\label{th:consistency_kernel_compact}
Under \ref{cond:f}, \ref{cond:K}, \ref{cond:compact_case}, if $(\lambda_n)_{n\geq 0}$ and $(h_n)_{n\geq 1}$ are positive decreasing sequences such that  $ a_n^2 \ll \lambda_n  $, we have
almost surely
\begin{align}\label{compZ}
&\sup_{\|x\| \leq n^r } | Z_n(x)| = O  \left( \sqrt{ \frac{n\log(n)}{  h_n^d  }} \right)\\
&   M_n    =  O (\sqrt {n\log n }) .\label{compM}
\end{align} 
As a consequence,
\begin{align*}
\sup_{\|x\| \leq n^r } | f_{n}(y) - \tilde f_{n}(y)  | =  O_{\mathbb P}  \left( \sqrt{ \frac{  \log(n) }{ n  h_n^d  }} \right).
\end{align*}

\end{theorem}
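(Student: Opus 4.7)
The plan is to mirror the argument of Lemma \ref{lemma:true_rate}, but with the uniform lower bound on $q_i$ coming from compactness rather than from the a priori rate \eqref{qfhyp}. On the event $\Omega_L$ of Lemma \ref{lemma:proba_control_compact_case} we have $\inf_{y\in S_f} q_i(y)\ge L$ for all $i$, so $f(y)/q_{i-1}(y) \le U_f/L =: U_w$ wherever $f>0$, and $=0$ elsewhere. This uniform weight bound, which previously required \eqref{qfhyp} and hence the stronger $a_n=O(\lambda_n)$, now comes for free under the weaker hypothesis $a_n^2\ll\lambda_n$, which is precisely what Lemma \ref{lemma:proba_control_compact_case} needs.

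For \eqref{compM}, I would apply Freedman's Bernstein inequality (Theorem \ref{th:freedman}) to the martingale increments $Y_i = f(X_i)/q_{i-1}(X_i)-1$. On $\Omega_L$, the increments are bounded by $m = U_w+1$ and, since $f^2/q_{i-1}\le U_w f$, the conditional quadratic variation is bounded by $v = nU_w$. Taking $t=\gamma\sqrt{n\log n}$ with $\gamma$ large makes the Bernstein tail summable in $n$, and Borel--Cantelli yields \eqref{compM} almost surely on $\Omega_L$.

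For \eqref{compZ}, I would apply Corollary \ref{th:freedmanZ} with $\Omega_1=\Omega_L$, $M(x)=U_w\mathds{1}_{S_f}(x)$, $\mu=f\,dx$, and $\varepsilon = h_n^{d+1}/n$. The relevant constants become $m = O(h_n^{-d})$, $\tau = 2L_K$, and $v = n h_n^{-d} U_K \int M(u) K_{h_n}(x-u) f(u)\,du = O(n h_n^{-d})$, hence $\tilde v = O(n h_n^{-d})$. Choosing $t=\gamma\sqrt{n\log(n)/h_n^d}$, the exponent in the corollary is of order $-C'\gamma^2\log n$, while the $\varepsilon$-grid over $\{\|x\|\le n^r\}$ has polynomial cardinality $O((n^{r+1}/h_n^{d+1})^d)$; taking $\gamma$ sufficiently large and invoking Borel--Cantelli then gives \eqref{compZ} almost surely on $\Omega_L$.

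The final consequence follows by plugging \eqref{compZ} and \eqref{compM} into the identity \eqref{df1}: $f_n-\tilde f_n = (Z_n-\tilde f_n M_n)/(M_n+n)$. Since $\tilde f_n$ is uniformly bounded (by \ref{cond:f}) and $M_n=o(n)$ by \eqref{compM}, the right-hand side is of order $\sqrt{\log n/(nh_n^d)}=a_n$. The only subtle point, handled by noting that $\Omega_L$ is nondecreasing as $L\downarrow 0$ with $\mathbb{P}(\Omega_L)\to 1$, is the passage from ``a.s.\ on $\Omega_L$'' to the stated unconditional conclusions: the control events have probability one on $\bigcup_{L>0}\Omega_L$. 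I expect no serious obstacle beyond routine bookkeeping; the polynomial grid factor in Corollary \ref{th:freedmanZ} is easily dominated by the exponential tail once $\gamma$ is large enough.
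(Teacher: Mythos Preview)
Your proposal is correct and follows essentially the same route as the paper: work on the event $\Omega_L$ where $f/q_{i-1}\le U_f/L$, apply Corollary~\ref{th:freedmanZ} for \eqref{compZ} and Theorem~\ref{th:freedman} for \eqref{compM}, then conclude via \eqref{df1} and let $L\downarrow 0$ using $\mathbb P(\Omega_L)\to 1$. The only cosmetic difference is your choice $\varepsilon=h_n^{d+1}/n$ (consistent with Lemmas~\ref{lemma:initial_variance} and~\ref{lemma:true_rate}) versus the paper's $\varepsilon=h_n/\sqrt n$; both give a polynomial grid factor dominated by the exponential tail once $\gamma$ is large enough.
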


\begin{proof}

Let us fix $L$ and apply Corollary~\ref{th:freedmanZ} with $\Omega_1=\Omega_L$
and  $\varepsilon=\varepsilon_n=h_n/\sqrt n$:
\begin{align*}
&\mathbb P\Big(\sup_{\|x\|\le n^r}|Z(x)|>t+\tau,\Omega_L\Big)
  \le 4(2n^r/\varepsilon)^d \exp\left(-\frac{t^2}{8(\tilde v+2mt/3)} \right)
\end{align*}
with 
\begin{align*}
&\mu(du)=f(u)du\\
&m=U_K(1+U_w h_n^{-d})\le U_K(1+U_fL^{-1} h_n^{-d})\le Ch_n^{-d}\\
&v=\sum_k h_n^{-d}U_K\int\frac{f(u)}{q_{k-1}(u)}  K_{h_n}(x-u )\mu(du)\le Cn h_n^{-d}\\
&\tau= C \varepsilon nh_n^{-1}\\
&\tilde v=\max(v,2m\tau)\le Cn h_n^{-d}
\end{align*} 
Taking $\varepsilon=\varepsilon_n=h_n/\sqrt n$, $t=t_n=\sqrt{\gamma n\log(n)h_n^{-d}}$
for some large $\gamma$,
we have $t_n\gg\tau$ and $\tilde v_n\gg mt_n$ (because $a_n\ll 1$), and the bound becomes
\begin{align*}
\mathbb P\Big(\sup_{\|x\|\le n^r}|Z(x)|>2t_n,\Omega_L\Big)
&  \le 4(1+2n^{r+1/2}/h_n)^d \exp\left(-C\frac{\gamma n\log(n)h_n^{-d}}{n h_n^{-d}} \right)\\
 & \le 4(1+2n^{r+1/2}/h_n)^d n^{-C\gamma}
\end{align*}
By the Borel-Cantelli lemma:
\begin{align*}
\mathbb P\Big(\varlimsup_n\sup_{\|x\|\le n^r}\frac{|Z(x)|}{t_n}>2,\Omega_L\Big).
=0
\end{align*}
Since this is true for any $L$, and $P(\Omega_L)\to 1$ as $L\to 0$
(cf. Lemma~\ref{lemma:proba_control_compact_case})
we have proved (\ref{compZ}).

For the second statement, we will use (\ref{df1}). Let us apply Theorem \ref{th:freedman} with 
\begin{align*}
Y_i =\frac{f(X_i)}{q_{i-1}(X_i)}   - \int  f(x) dx.
\end{align*}
On the set $\Omega_L$, we have (bound on the quadratic variation)
\begin{align*}
{v}&=\sum_{i=1}^n \mathbb E \left[\left(  \frac{f(X_i) }{q_{i-1}(X_i)}    - \int   f(x) dx \right)^2 \Big | \mathcal F_{i-1}\right] 
 \leq   \sum_{i=1}^n \int \frac{   f(y)^2  }{q_{i-1}(y)}  \,\diff y
  \leq  U_f L^{-1} n .
\end{align*}
Still on $\Omega_L$, a bound on the martingale increments is given as
\begin{align*}
& {m} = \max_{i=1,\ldots,n} \sup_{y\in \mathbb R^d} \left|\frac{ f(y) }{ q_{i-1}(y)}   - 1 \right|\leq  U_fL^{-1} +1 .
\end{align*}
Theorem \ref{th:freedman} implies that for any $t>0$
\begin{align*}
\mathbb P \left(| M_n| >t , \Omega_L \right) & \leq 2 \exp\left( -\frac{ Ct^2 }
{ n  +  t }\right).
\end{align*}
for some $C>0$ depending only on $(f,L)$. Choosing $t = \gamma \sqrt {n\log n}$ 
with $\gamma$ large enough, we get that
\begin{align*}
\sum_n \mathbb P \left(| M_n| > \gamma \sqrt {n\log n} , \Omega_L  \right) <+\infty.
\end{align*}
Since $\mathbb P ( \Omega_L) \to 1$ as $L\to 0$ (cf. Lemma~\ref{lemma:proba_control_compact_case}),
(\ref{compM}) is proved.

Using (\ref{decomp_final}), we now directly get the third statement.
\end{proof}

\end{appendices}

\end{document}